\newfont{\cyrr}{wncyr10}
\newcommand{\N}{{\mathfrak N}}
\newcommand{\Z}{{\mathbb Z}}
\newcommand{\Q}{{\mathbb Q}}
\newcommand{\C}{{\mathbb C}}
\newcommand{\K}{{\mathbf K}}
\renewcommand{\P}{{\mathcal P}}
\newcommand{\p}{{\mathfrak p}}
\newcommand{\q}{{\mathfrak q}}
\newcommand{\W}{{\mathcal W}}
\renewcommand{\O}{{\mathcal O}}
\renewcommand{\L}{{\mathbf L}}
\renewcommand{\a}{{\mathfrak  a}}
\renewcommand{\b}{{\mathfrak  b}}
\renewcommand{\c}{{\mathfrak  c}}
\newcommand{\B}{{\mathfrak  B}}
\newcommand{\e}{{\mathfrak  e}}
\newcommand{\m}{{\mathfrak  m}}
\newcommand{\M}{{\mathbf  M}}
\newcommand{\rO}{{\rm O}}
\renewcommand{\d}{{\mathfrak{d}}}
\newcommand{\F}{{\mathbb F}}
\newcommand{\psum}{\sideset{}{^*}\sum}
\newtheorem{thm}{Theorem}
\newtheorem{lem}[thm]{Lemma}
\newtheorem{cor}[thm]{Corollary}
\newtheorem{prop}[thm]{Proposition}
\newtheorem{rmk}[thm]{Remark}
\newtheorem{defn}[thm]{Definition}
\newcommand{\thmref}[1]{Theorem~\ref{#1}}
\newcommand{\corref}[1]{Corollary~\ref{#1}}
\newcommand{\lemref}[1]{Lemma~\ref{#1}}
\def\ndiv{{ | \negthinspace \negthinspace / }}
\def\ndiv{{\kern3pt | \kern-6pt - \kern1pt}}
\DeclareMathOperator{\gal}{Gal}
\begin{document}

\title[On Gerth's heuristics for quadratic extensions of certain number fields]{On Gerth's heuristics for a family of quadratic extensions of certain Galois number fields}

\author{ C. G. K. Babu, R. Bera}

\address[C. G. K. Babu, R. Bera]
{ Indian Statistical Institute, 8th Mile, Mysore Rd, RVCE Post, Bengaluru, Karnataka 560059.}

\author{ J. Sivaraman}

\address[J. Sivaraman]
{IISER Thiruvananthapuram Campus, Maruthamala P. O, Vithura, Kerala 695551.}

\author{ B. Sury}

\address[B. Sury]
{ Indian Statistical Institute, 8th Mile, Mysore Rd, RVCE Post, Bengaluru, Karnataka 560059.}

\maketitle

\begin{abstract}
Gerth generalised Cohen-Lenstra heuristics to the prime $p=2$.
He conjectured that for any positive integer $m$, the limit
 $$
 \lim_{x \to \infty} \frac{\sum_{0 < D \le X, \atop{ \text{squarefree} }} |{\rm Cl}^2_{\Q(\sqrt{D})}/{\rm Cl}^4_{\Q(\sqrt{D})}|^m}{\sum_{0 < D \le X, \atop{ \text{squarefree} }} 1}
 $$
 exists and proposed a value for the limit.
Gerth's conjecture was proved by Fouvry and Kluners in 2007. In this paper, we generalize their result by obtaining lower
bounds for the average value of  $|{\rm Cl}^2_{\L}/{\rm Cl}^4_{\L}|^m$, where
$\L$ varies over an infinite family of quadratic extensions of certain Galois number fields. As a special case of our theorem we obtain
lower bounds for the average value when the base field is any Galois number field
with class number $1$ in which $2\Z$ splits.
\end{abstract}

\section{Introduction}
In 1984, based on some numerical evidence, Cohen and Lenstra made
striking conjectures on the structure of the odd part of the
(narrow) class group, and on divisibility properties for class
numbers of quadratic fields. For instance, their predictions imply
that, for any positive integer $n$, quadratic fields with class
number divisible by $n$, must have positive density among the
family of all quadratic fields. Among other things, Cohen-Lenstra's
conjecture asserts that the probability that the class number of a
real quadratic field is divisible by an odd prime $p$ is
$$1 -
\prod_{n=2}^{\infty} (1- 1/p^n).
$$
A key idea of Cohen-Lenstra is to
associate as a weight to the class group, the reciprocal of the
order of its automorphism group. In 1987, F. Gerth modified the
Cohen-Lenstra heuristics to the prime $p=2$ by considering the
square of the class group. Twenty years later, E. Fouvry and J. Kluners \cite{FK07} confirmed these predictions
on the $4$-ranks of class groups ${\rm Cl}_\K$ of quadratic fields $\K$.
Here, $4$-rank refers to the $\mathbb{F}_2$-dimension of
${\rm Cl}_\K^2/{\rm Cl}_\K^4$. If $f$ is a sufficiently nice, real-valued, positive
function on the set of discriminants, one can define its average
value in a natural manner. When $f$ is the characteristic function
of a set of discriminants satisfying some specific property, this
average value - if it exists - is said to be the density of this set
of discriminants. If $f(D)$ is of the form
$$\prod_{i=0}^r
\left(2^{{\rm dim}_{\mathbb{F}_2}\left({\rm Cl}_{\Q(\sqrt{D})}^2/{\rm Cl}_{\Q(\sqrt{D})}^4\right)} - 2^i\right)
$$
for some positive
integer $r$, Fouvry-Kluners obtain the densities both for positive
as well as negative fundamental discriminants $D$, thereby
confirming Gerth's conjecture. \vskip 5mm

\noindent In this paper, we follow their technique to treat
the more general case of quadratic extensions of a class of number
fields that have certain nice properties. We obtain lower bounds for
the densities. The lower bound involves the number of subspaces of
cardinality $2^m$ in $\mathbb{F}_2^{2m}$. This idea of employing the
geometry of $\mathbb{F}_2$-vector spaces was a novel one introduced
by Fouvry and Kluners. In order to use their ideas, we restrict our
base field to a family of class number one fields. Even though the outline of our proofs
follows that of Fouvry and Kluners, we need to carry out a number of
technical generalizations to adapt their proof. We
use generalized versions of the Hilbert and Jacobi symbols, and of
the Siegel-Walfisz theorem and other analytic estimates to complete our proof.
Throughout this article, $\K$ will be a number field such that:
\begin{enumerate}
\item the extension $\K/\Q$ is Galois,\label{one}
\item the ring of integers $\O_{\K}$ is a principal ideal domain, and \label{two}
\item there exists a unit $\varepsilon \in \O_{\K}^*$ such that the order of $\varepsilon \bmod \p^2$ is $2$ for all $\p \mid 2\O_{\K}$. \label{units}.
\end{enumerate}
Examples of quadratic fields satisfying the above conditions are $\Q$, $\Q(i)$ and $\Q(\sqrt{3})$ to name a few.
Further, we show in Section \ref{examples} that any Galois number field with class number $1$ in which $2\Z$ splits will also satisfy the
above three conditions.
 We refer the reader Section \ref{examples} for several explicit examples.
For a number field $\K$, we shall use $n_\K$, $r_{\K}$ and $\N$ to denote, respectively, its degree, rank of the unit group, and the norm map to $\Q$.
Further, for a field extension $\L$ of $\K$,  ${\rm Cl}_{\L}$, will denote the class group of $\L$ and
${\rm rk}_4({\rm C}_{\L})$ will be used to denote
the 4-rank of $\L$, viz.
$$
{\rm dim}_{\mathbb{F}_2}\left({\rm Cl}_{\L}^2/ {\rm Cl}_{\L}^4\right).
$$
Our aim is to obtain, for a family $\mathcal{F}$ of quadratic extensions of $\K$ and any positive integer $m$,
a non-trivial lower bound for
$$
\liminf_{X \to \infty } \frac{\sum_{\L \in \mathcal{F}(X)} 2^{m \cdot {\rm rk}_4({\rm Cl}_{\L})}}{\sum_{\L \in \mathcal{F}(X)} 1}
$$
where $\mathcal{F}(X)$ is used to denote the number of fields in $\mathcal{F}$ for which the absolute norm of relative discriminant of $\L/\K$
at most $X$.
\subsection*{Choosing the family $\mathcal{F}$ of quadratic extensions of $\K$}
We state first a lemma due to Smith \cite{HS} to help us in
selecting an appropriate family of quadratic extensions of $\K$.
\begin{lem}[H. Smith \cite{HS}]\label{mono}
Let $\p$ be  a prime above $2\Z$ in $\K$.
We use $f$ to denote the residue class degree of $\p$ over $\Q$.
Let $\L = \K(\sqrt{\alpha})$ for some $\alpha \in \O_{\K}$. Then $\O_{\L} = \O_{\K}[\sqrt{\alpha}]$
if and only if $\alpha\O_{\K}$ is square free and
$$
\alpha^{2^f} \not \equiv \alpha \bmod \p^2 \phantom{mm} \text{for all } \p \mid 2\O_{\K}.
$$
\end{lem}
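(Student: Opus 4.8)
This is the relative version of the classical determination of $\O_{\Q(\sqrt{d})}$, and I would follow the same strategy: work prime by prime. Since $\O_{\L}$ is a finitely generated module over the order $\O_{\K}[\sqrt{\alpha}]$, one has $\O_{\L}=\O_{\K}[\sqrt{\alpha}]$ if and only if $R[\theta]=\widetilde{R}$ for every nonzero prime $\q$ of $\O_{\K}$, where $R=(\O_{\K})_{\q}$ (a discrete valuation ring, uniformiser $\pi$, valuation $v_{\q}$), $\theta=\sqrt{\alpha}$, and $\widetilde{R}=\O_{\L}\otimes_{\O_{\K}}R$ is the integral closure of $R$ in $\L$; only the primes dividing $\mathrm{disc}(X^{2}-\alpha)=4\alpha$, i.e. those above $2$ together with those dividing $\alpha$, can matter. (We may assume $\L$ is a quadratic field, so $\theta\notin\K$; the reducible case is straightforward.) At a prime $\q\nmid2$ I would read off $R[\theta]$ directly: if $v_{\q}(\alpha)=0$, then $X^{2}-\alpha$ is separable modulo $\q$ and $R[\theta]$ is étale over $R$, hence integrally closed; if $v_{\q}(\alpha)=1$, then $X^{2}-\alpha$ is Eisenstein at $\q$ and $R[\theta]$ is the valuation ring of a totally ramified quadratic extension; and if $v_{\q}(\alpha)\ge2$, then $\sqrt{\alpha/\pi^{2}}=\theta/\pi$ is integral over $R$ but not in $R+R\theta$. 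So at odd primes $R[\theta]=\widetilde{R}$ exactly when $v_{\q}(\alpha)\le1$, which over all odd $\q$ says that the prime-to-$2$ part of $\alpha\O_{\K}$ is squarefree.

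The substance lies at the primes $\p\mid2$, $f$ being the residue degree. If $v_{\p}(\alpha)\ge2$ the order fails to be maximal (as above) and $\alpha^{2^{f}}\equiv\alpha\equiv0\pmod{\p^{2}}$. If $v_{\p}(\alpha)=1$, then $X^{2}-\alpha$ is Eisenstein, $R[\theta]=\widetilde{R}$, and $v_{\p}(\alpha^{2^{f}}-\alpha)=1$ so the displayed congruence fails. The crucial case is $v_{\p}(\alpha)=0$, where reduction modulo $\p$ is useless since $X^{2}-\bar\alpha=(X-\bar\alpha^{\,2^{f-1}})^{2}$ is inseparable over the residue field. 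Here I claim $R[\theta]=\widetilde{R}$ if and only if $\alpha$ is not a square modulo $\p^{2}$. Indeed, if $\alpha\equiv\beta^{2}\pmod{\p^{2}}$ with $\beta\in\O_{\K}$, then $\gamma=(\theta-\beta)/\pi$ satisfies $\gamma^{2}+(2\beta/\pi)\gamma+(\beta^{2}-\alpha)/\pi^{2}=0$, whose coefficients lie in $R$ because $v_{\p}(2)\ge1$ and $v_{\p}(\beta^{2}-\alpha)\ge2$; thus $\gamma\in\widetilde{R}\setminus R[\theta]$ and the order is not maximal. Conversely, if $R[\theta]\subsetneq\widetilde{R}$, then (since $\widetilde{R}/R$ is a free $R$-module of rank one) one may write $\widetilde{R}=R\oplus R\omega$ with $\omega=(\theta-a)/\pi^{t}$ for some $a\in R$, $t\ge1$; integrality of $\omega$ forces $(a^{2}-\alpha)/\pi^{2t}\in R$, so $\alpha\equiv a^{2}\pmod{\p^{2}}$.

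Finally one must identify the squares modulo $\p^{2}$. Put $k=\O_{\K}/\p\cong\F_{2^{f}}$. The exact sequence $1\to1+\p/\p^{2}\to(\O_{\K}/\p^{2})^{\ast}\to k^{\ast}\to1$ splits, since $|1+\p/\p^{2}|=2^{f}$ is prime to $|k^{\ast}|=2^{f}-1$, so $(\O_{\K}/\p^{2})^{\ast}\cong\mu\times(1+\p/\p^{2})$ with $\mu$ cyclic of order $2^{f}-1$. Squaring is an automorphism of $\mu$ and annihilates $1+\p/\p^{2}$ — one has $(1+\pi x)^{2}\equiv1\pmod{\p^{2}}$ because $\pi^{2}\in\p^{2}$ and $v_{\p}(2\pi x)\ge2$ — so the set of squares in $(\O_{\K}/\p^{2})^{\ast}$ is exactly $\mu=\{u:u^{2^{f}-1}=1\}=\{u:u^{2^{f}}=u\}$. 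Hence, for a $\p$-unit $\alpha$, being a square modulo $\p^{2}$ is equivalent to $\alpha^{2^{f}}\equiv\alpha\pmod{\p^{2}}$. Putting the three cases together, for each $\p\mid2$ one obtains $R[\theta]=\widetilde{R}$ precisely when $\alpha^{2^{f}}\not\equiv\alpha\pmod{\p^{2}}$ — a condition which already entails $v_{\p}(\alpha)\le1$ — and combined with the odd-prime analysis this is exactly the stated criterion. The one genuinely delicate point is the wildly ramified case $v_{\p}(\alpha)=0$ above $2$: the separability argument is unavailable, and one is forced to compute the squaring map on $(\O_{\K}/\p^{2})^{\ast}$ explicitly, which is where the exponent $2^{f}$ in the statement originates.
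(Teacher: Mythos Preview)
The paper does not supply a proof of this lemma; it is quoted as a result of Smith \cite{HS}. Your argument gives a correct, self-contained proof by the natural route---test maximality of $\O_{\K}[\sqrt{\alpha}]$ locally at each prime and reduce the wild case $\p\mid 2$, $v_{\p}(\alpha)=0$ to the computation of the squaring map on $(\O_{\K}/\p^{2})^{\ast}$---and this is essentially how Smith proceeds in the quadratic case of his paper.

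One expository gap worth tightening: in the converse direction of the $v_{\p}(\alpha)=0$ step you assert that a basis element $\omega$ of $\widetilde{R}$ over $R$ can be taken of the form $(\theta-a)/\pi^{t}$ with $a\in R$, but this is not automatic. After scaling by a unit so that the $\theta$-coefficient of $\omega$ is $\pi^{-t}$ one has $\omega=p+\pi^{-t}\theta$ with $p\in\K$; the norm condition $p^{2}-\alpha\pi^{-2t}\in R$ together with $v_{\p}(\alpha)=0$ then forces $v_{\p}(p)=-t$, so that $a:=-p\pi^{t}$ indeed lies in $R^{\ast}$. With this verified, and noting (as you do) that the congruence condition at $\p\mid 2$ already forces $v_{\p}(\alpha)\le 1$, the local criteria assemble exactly into the stated global one.
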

Let $\M$ be the compositum of $\K_{4\O_{\K}}$ and $\K((\O_{\K}^*)^{1/2})$ and let $\mathfrak{f}$ be the conductor of $\M/\K$.
Here $\K_{4\O_{\K}}$ (resp. $\K_{\mathfrak{f}}$) is the ray class field of $\K$ with respect to the modulus $4\O_{\K}$ (resp. $\mathfrak{f}$).
We will now define a family of quadratic extensions of
such a field $\K$.
\begin{defn}\label{oneS}
Let $\zeta_j$ be a generator of subgroup of the roots of unity in $K$ and let $S = \{\varepsilon_1, \ldots \varepsilon_{r}\}$ be a set of fundamental units
generating $\O_{\K}^*$ modulo its torsion part.
We denote by $\mathcal{C}$ the product of the absolute norms of the conductors of the orders $\O_{\K}[\sqrt{\varepsilon}]$ in $\O_{\K(\sqrt{\varepsilon})}$
as we vary $\varepsilon$ in the set $S \cup \{\zeta_j\}$.
Let
$$
\P_{\K} = \{ \p \subset \O_{\K} : (\p, \mathcal{C}\O_{\K}) = \O_{\K} \text{ and } \p \text{ splits in } \K_{\mathfrak{f}}\}.
$$
\end{defn}
\begin{defn}\label{clas of alphas defn}
Let
\begin{eqnarray}
\W & = & \{  \alpha \O_{\K} \subset \O_{\K} : \p \mid \alpha\O_{\K} \implies \p \in \P_{\K}, \alpha\O_{\K} \text{ square free } \}
\text{ and } \\
\W(X) & = & \{ \alpha \O_{\K} \subset \O_{\K} :  \N(\alpha \O_{\K}) \le X, ~ \alpha \O_{\K} \mid P_{\K}(X) \}.
\end{eqnarray}
For a generator $\alpha$ of $\alpha\O_{\K}$ such that
\begin{eqnarray}\label{cond2}
\alpha^{2^f} \not \equiv \alpha \bmod \p^2 \phantom{m} \text{for all } \p \mid 2\O_{\K},
\end{eqnarray}
we set $\L_{\alpha}$ to be $\K(\sqrt{\alpha})$.
\end{defn}
It stands to question why an $\alpha$ satisfying \eqref{cond2} should exist for $\alpha\O_{\K} \in \W$. To see this, we note
that by condition \ref{units} there exists a unit $\varepsilon \in \O_{\K}^*$
such that order of $\varepsilon \bmod \p$ is $2$ for all $\p \mid 2\O_{\K}$. If there exists a generator $\alpha$ for the ideal $\alpha\O_{\K} \in \W$
satisfying \eqref{cond2}, we are done.  If not, the generator $\varepsilon \alpha $ will
satisfy \eqref{cond2}.

Finally we define
\begin{eqnarray*}
\mathcal{F'} &:= & \{\L_{\alpha} : \alpha\O_{\K} \in \mathcal{W}\}.
\end{eqnarray*}
For each such $\L_{\alpha} \in \mathcal{F}'$,  by \lemref{mono} we now have $\O_{\L_{\alpha}} = \O_{\K}[\sqrt{\alpha}]$. Since $\{ 1, \sqrt{\alpha} \}$ is a relative integral basis of $\O_{\L_{\alpha}}/\O_{\K}$,
we know that $\mathfrak{d}_{\L_{\alpha}/\K} = 4\alpha \O_{\K}$. We now choose a set $\mathcal{F} \subset \mathcal{F}' $ such that
\begin{itemize}
\item for any $\alpha\O_{\K} \in \mathcal{W}$, $\L_{\alpha_1} \in \mathcal{F}$ for exactly one generator $\alpha_1$ of $\alpha\O_{\K}$ and
\end{itemize}
Let us also set for convenience the following notation:
$$
\mathcal{F}(X) := \{ \L \in \mathcal{F} : \N(\mathfrak{d}_{\L/\K}) \le 4X\}.
$$
We state the main theorem of our article now.
\begin{thm}\label{mainthm}
For any positive integer $m$ and $x$ varying in $\mathbb{R}$,
$$
\liminf_{X \to \infty} \frac{\sum_{\L \in \mathcal{F}(X)} 2^{m\cdot rk_4({\rm Cl}_{\L}) }}{\sum_{\L \in \mathcal{F}(X)}1} \ge  \frac{\mathcal{N}(2m,2)}{2^{m(r_\K+1)}}.
$$
Here $\mathcal{N}(2m,2)$ is used to denote the number of subspaces of $\mathbb{F}_2^{2m}$ of $2^m$ elements.

\end{thm}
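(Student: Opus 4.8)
The plan is to transplant the method of Fouvry and Kl\"uners \cite{FK07} to the base field $\K$, organised into an algebraic reduction, a combinatorial identity, and an analytic estimate; the hypotheses \ref{one}--\ref{units} on $\K$ together with the splitting condition of Definition~\ref{oneS} are precisely what make the first two steps go through. \emph{Step 1: the $4$-rank as the corank of a symmetric $\F_2$-form.} Fix $\L=\L_{\alpha}\in\mathcal F$ with $\alpha\O_{\K}=\p_1\cdots\p_t$. By \lemref{mono} and the construction of $\mathcal F$ one has $\O_{\L}=\O_{\K}[\sqrt\alpha]$ and $\mathfrak{d}_{\L/\K}=4\alpha\O_{\K}$, so the primes ramifying in $\L/\K$ are exactly $\p_1,\dots,\p_t$ together with the fixed set of primes of $\K$ above $2$. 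Using that $\O_{\K}$ is a principal ideal domain (condition \ref{two}), Chevalley's ambiguous class number formula for $\L/\K$ expresses $\mathrm{rk}_2(\mathrm{Cl}_{\L})$ as $t$ plus a constant depending only on $\K$ minus an integer between $0$ and $r_{\K}+1$ recording how many of a fixed generating set $\{\varepsilon_1,\dots,\varepsilon_{r_{\K}},\zeta_j\}$ of $\O_{\K}^{*}$ become norms from $\L$. A R\'edei--Gras type reciprocity over $\K$ then identifies $\mathrm{rk}_4(\mathrm{Cl}_{\L})$ with $\mathrm{rk}_2(\mathrm{Cl}_{\L})$ minus the $\F_2$-rank of an explicit matrix $M_{\alpha}$ whose symmetric $t\times t$ block has off-diagonal entries the additive generalized quadratic residue symbols $\bigl[\tfrac{\p_i}{\p_j}\bigr]_{\K}\in\F_2$, its diagonal fixed by requiring every row sum to vanish, and which carries at most $r_{\K}+1$ further rows recording the symbols $\bigl[\tfrac{\varepsilon}{\p_i}\bigr]_{\K}$. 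This is the point at which the generalized Jacobi and Hilbert symbols replace the classical ones; the requirement in Definition~\ref{oneS} that every prime of $\P_{\K}$ split in $\K_{\mathfrak{f}}$ guarantees that the reciprocity correction terms attached to $2$ and to the units vanish, so $M_{\alpha}$ has exactly this shape. Accounting for the units and the genus defect jointly (so as not to lose a factor twice), and writing $B_{\alpha}$ for the symmetric bilinear form on $\F_2^{\,t}$ carried by the symmetric block, one obtains
$$
2^{\,m\cdot\mathrm{rk}_4(\mathrm{Cl}_{\L})}\ \ge\ 2^{-m(r_{\K}+1)}\cdot N_m(\alpha),\qquad N_m(\alpha):=\#\{(\phi_1,\dots,\phi_m)\in\mathrm{rad}(B_{\alpha})^{\,m}\}.
$$

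\emph{Step 2: the combinatorial identity.} Summing over $\alpha\O_{\K}\in\W(X)$, the quantity $\sum_{\alpha}N_m(\alpha)=\sum_{\alpha}\#\{\Phi\colon\{\p\mid\alpha\}\to\F_2^{m}:B_{\alpha}\Phi=0\}$ is re-organised as a sum over $\F_2^{m}$-colourings $\Phi$ of the prime divisors of $\alpha$ and over $\alpha$ for which $\Phi$ lies in $\mathrm{rad}(B_{\alpha})$. Following the $\F_2$-linear-algebra lemma of \cite{FK07}, the colourings contributing the main term are in bijection with the $m$-dimensional subspaces of an ambient space $\F_2^{2m}$ --- the factor $2$ arising, as in \cite{FK07}, from a reflection pairing between a colouring and the constraint it must satisfy --- and there are exactly $\mathcal N(2m,2)$ of these. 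Since every term of $\sum_{\alpha}N_m(\alpha)$ is non-negative, it suffices for the lower bound to retain only these $\mathcal N(2m,2)$ configurations and discard the rest.

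\emph{Step 3: the analytic count.} For each retained configuration, the number of $\alpha\O_{\K}\in\W(X)$ realising it equals $(1+o(1))$ times the corresponding product of local densities, each residue-symbol condition contributing a factor $1/2$; here one invokes a generalized Siegel--Walfisz theorem --- equivalently, an effective Chebotarev statement --- for the ray class fields of $\K$ cut out by the relevant quadratic symbols, in order to control the joint distribution of the $\bigl[\tfrac{\p_i}{\p_j}\bigr]_{\K}$ as the $\p_i$ range over $\P_{\K}$ with $\N(\p_i)\le X$. The combinatorial normalisation makes these local factors combine with the enumeration of colourings so that each retained configuration contributes, after division by $\#\mathcal F(X)$, exactly $2^{-m(r_{\K}+1)}(1+o(1))$; using $\#\mathcal F(X)\sim\#\W(X)$ (which follows from the choice of a single generator per ideal and the identity $\N(\mathfrak{d}_{\L_{\alpha}/\K})=4\,\N(\alpha\O_{\K})$) and summing over the $\mathcal N(2m,2)$ retained configurations yields the asserted bound.

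\emph{Main obstacle.} The hardest part will be the analytic input of Step 3: one needs the number-field analogue of the double-oscillation and large-sieve estimates that Fouvry and Kl\"uners use to bound sums of Jacobi symbols over pairs of primes, together with a Siegel--Walfisz theorem uniform enough to handle all the simultaneous congruence-and-symbol conditions as $\alpha$ ranges up to $X$. A secondary but delicate point is the bookkeeping in Step 1 of precisely which generalized residue symbols appear in $M_{\alpha}$; it is to keep this tractable that we impose the strong hypotheses \ref{one}--\ref{units} and the splitting condition of Definition~\ref{oneS}, and it is also the reason the method currently yields only a lower bound rather than an asymptotic.
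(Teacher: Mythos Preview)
Your overall architecture---algebraic lower bound, combinatorial expansion, analytic error control---matches the paper's, and you have correctly identified the hardest analytic ingredients. But two points deserve comment.

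\textbf{Step 1 is done differently in the paper.} You invoke a R\'edei--Gras matrix identity for $\mathrm{rk}_4$ over $\K$ and then relax it to an inequality. The paper instead proceeds more elementarily: Lemma~\ref{8} bounds $2^{\mathrm{rk}_4}$ below by (essentially) the number of $\mathfrak b\in\mathfrak B_{\L}$ whose class is a square; Proposition~\ref{equiv} converts ``$[\mathfrak b]$ is a square'' into the Hilbert-symbol condition $(\alpha\mid b\varepsilon)=1$; and Lemma~\ref{laststep} shows this forces $a$ a square mod $b\O_{\K}$ and $b$ a square mod $a\O_{\K}$, giving Theorem~\ref{algresult}. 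This yields directly
\[
2^{\mathrm{rk}_4(\mathrm{Cl}_{\L_\alpha})}\ \ge\ \frac{1}{2^{r_{\K}+2}}\,T(\alpha\O_{\K}),\qquad
T(\alpha\O_{\K})=\#\{(a\O_{\K},b\O_{\K}):ab\O_{\K}=\alpha\O_{\K},\ a\text{ sq.\ mod }b,\ b\text{ sq.\ mod }a\},
\]
with no need to establish a full R\'edei formula over $\K$. Your route is not wrong, but it asserts more than is needed and would require its own proof in this generality.

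\textbf{Step 2 contains a genuine gap.} You write that, because each $N_m(\alpha)$ is non-negative, ``it suffices \dots\ to retain only these $\mathcal N(2m,2)$ configurations and discard the rest.'' This is not how the lower bound is obtained. Once you expand $T(\alpha\O_{\K})^m$ via characters (in the paper, as a sum over $4^m$-factorisations $\alpha\O_{\K}=\prod_{\bar u\in\F_2^{2m}}\partial_{\bar u}$ with weight $\prod(\partial_{\bar u}/\partial_{\bar v})^{\Phi_m(\bar u,\bar v)}$), the individual summands are $\pm1$, not non-negative, so you cannot simply drop the non-main configurations. The main configurations are those for which every pair of indices with nontrivial $\partial_{\bar u},\partial_{\bar v}$ is \emph{unlinked} (so the symbols cancel by the reciprocity of Lemma~\ref{quadreciprocity}); Lemma~\ref{unlinked} identifies the maximal unlinked sets with cosets of $m$-dimensional subspaces of $\F_2^{2m}$, producing the factor $\mathcal N(2m,2)$ after the normalisations. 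Everything else must be shown to be $o(|\W(X)|)$, and that is exactly what the paper's ``four families'' analysis (Section~\ref{fam4}, Proposition~\ref{4family prop}) does, using Shiu's short-interval bound, Heilbronn's character-sum estimate (Lemma~\ref{Heilbronn est}), Wilson's large sieve (Lemma~\ref{large sieve lem}), and Goldstein's Siegel--Walfisz (Lemma~\ref{sig-wal lem}). Your Step~3 names these tools, but you should be clear that they are used to \emph{bound} the discarded terms, not merely to count the retained ones.
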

Infinitude of the family $\mathcal{F}$ follows from Chebotarev's density theorem. In Section \ref{infinitude},
we compute an asymptotic for the cardinality of the set $\mathcal{F}(X)$ as $X \to \infty$. In order to prove our theorem, an important
ingredient is a lower bound for $2^{ rk_4({\rm Cl}_{\L}) }$ for  $\L \in \mathcal{F}$. This is computed in Section \ref{lowerbound}.
In Section \ref{algprelims} we recall the definition of Ray class groups and introduce an analogue of the Jacobi symbol which will play the main role in the proof of \thmref{mainthm}.
We also prove certain properties of this new symbol.
Section \ref{anaprelims} has been divided into two parts.
Subsection \ref{anaprelims1} deals with the divisor function, some of its variants and their average orders. In Subsection \ref{anaprelims2} we state some important character sum results such as the Large Sieve inequality for number fields, Siegel-Walfisz for number fields and prove a generalisation of a Lemma of Heilbronn. In Section \ref{avg} we begin computing a lower bound for the average of $2^{m \cdot {\rm rk}_4({\rm Cl}_{\L})}$ as $\L$ varies in $\mathcal{F}$, for any positive integer $m$.
This section is divided into several parts. Subsections \ref{largenumofdiv} and \ref{fam4} are devoted to rewriting the main sum and  bounding the contribution of certain subsums, culminating in
Propostion \ref{4family prop}. In Subsection \ref{gui} we state a result on the indices of this new sum which are not ``linked" (see Section \ref{avg} for definition). This will be used  to rewrite the main sum of Propostion \ref{4family prop}  in Subsection \ref{final}. We complete the proof of \thmref{mainthm} in Subsection \ref{final}. Finally, we conclude with examples of fields $\K$ which satisfy conditions  \ref{one}, \ref{two} and \ref{units} in Section \ref{examples}.
\section{Cardinality of  $\mathcal{F}(X)$}\label{infinitude}

We begin by noting that
\begin{eqnarray*}
\#\mathcal{F}(X) =
\#\{ \alpha\O_{\K} :\alpha\O_{\K}\in \W(X) \}
 =
\sum_{ \N(\alpha \O_{\K}) \le X, \atop \alpha \O_{\K} \mid P_{\K}(X)} 1
\end{eqnarray*}
We recall here a version of the Tauberian theorem as seen in \cite{RM}.
Let us define the dirichlet series
\begin{eqnarray}\label{f}
f(s) ~=~ \sum_{\a\neq (0) \atop \a \subset \O_{\K}} \frac{b_{\a}}{\N\a^s}~ = ~\prod_{\p \in \P_{\K}} \left( 1 + \frac{1}{\N\p^s} \right) = \sum_{n>0} \frac{a_n}{n^s}.
\end{eqnarray}
Note that $b_{\a}=1$ if $\a$ is squarefree and composed only of the primes in $\P_{\K}$. Further $b_\a$ is $0$ otherwise
and $a_n = \sum_{\N \a=n} b_{\a} $.
\begin{thm}
Let $0 < \alpha < 1$ be a real number.
Suppose that we can write
$$
f(s) = \frac{h(s)}{(s-1)^{1-\alpha}}
$$
for some $h(s)$ holomorphic in $\Re(s) \ge 1$
and non-zero there. Then
$$
\sum_{n \le x} a_n \sim \frac{d(\alpha)x}{(\log x)^{\alpha}}
$$
where $d(\alpha) = h(1)/ \Gamma(1-\alpha)$.
\end{thm}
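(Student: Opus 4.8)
\emph{Proof plan.} I would deduce this by the Selberg--Delange method, i.e.\ contour integration anchored at the branch point $s=1$. The starting point is Perron's formula: since each $b_\a\in\{0,1\}$ the series $\sum_n a_n n^{-s}=f(s)$ converges absolutely for $\Re s>1$, so for $c>1$ and $x\notin\Z$ one has
$$
\sum_{n\le x} a_n \;=\; \frac{1}{2\pi i}\int_{c-i\infty}^{c+i\infty} f(s)\,\frac{x^s}{s}\,ds .
$$
Reading the hypothesis in the customary way --- $h$ holomorphic and non-vanishing on an open neighbourhood of the closed half-plane $\Re s\ge 1$ --- the identity $f(s)=h(s)(s-1)^{\alpha-1}$ extends $f$ to a neighbourhood of the line $\Re s=1$ slit along $(-\infty,1]$, with an algebraic branch point of exponent $\alpha-1$ at $s=1$.

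\emph{First step: shift the contour.} I would replace the vertical line by a truncated Hankel contour --- horizontal segments at heights $\pm T$ running from $\Re s=c$ inwards to a small circle of radius $r\asymp 1/\log x$ about $s=1$, that circle, and the two banks of the slit rejoining the circle to $\Re s=c$ --- and bound every piece except the part within a fixed distance of $s=1$. Using polynomial growth of $f$ on and slightly to the left of $\Re s=1$ together with the decay from $x^s/s$, this gives
$$
\sum_{n\le x} a_n \;=\; \frac{1}{2\pi i}\int_{\mathcal H_0} f(s)\,\frac{x^s}{s}\,ds \;+\; O\!\left(\frac{x}{(\log x)^{\alpha+1}}\right),
$$
where $\mathcal H_0$ is the portion of the contour near $s=1$.

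\emph{Second step: local analysis at $s=1$.} On $\mathcal H_0$ I would substitute $s=1+w/\log x$, so that $x^s=x\,e^{w}$, $ds=dw/\log x$, and $(s-1)^{\alpha-1}=(\log x)^{1-\alpha}w^{\alpha-1}$; expanding $h(s)=h(1)+O(|s-1|)$ and $1/s=1+O(|s-1|)$ turns the integral into
$$
h(1)\,\frac{x}{(\log x)^{\alpha}}\cdot\frac{1}{2\pi i}\int_{\mathcal H'} e^{w}\,w^{\alpha-1}\,dw \;+\; O\!\left(\frac{x}{(\log x)^{\alpha+1}}\right),
$$
with $\mathcal H'$ a Hankel contour around $w=0$. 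Hankel's representation of the reciprocal Gamma function gives $\frac{1}{2\pi i}\int_{\mathcal H'}e^{w}w^{\alpha-1}\,dw=\frac{1}{\Gamma(1-\alpha)}$, so the main term is $\frac{h(1)}{\Gamma(1-\alpha)}\cdot\frac{x}{(\log x)^{\alpha}}=\frac{d(\alpha)x}{(\log x)^{\alpha}}$, and $d(\alpha)\ne 0$ since $h(1)\ne 0$; every error term carries an extra factor $1/\log x$ and is absorbed.

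\emph{The main obstacle.} The algebra near $s=1$ is routine; the real difficulty is justifying the contour shift --- continuing $f$ slightly past $\Re s=1$ and controlling its growth there. In the arithmetic situation of this paper $f$ is, up to a factor holomorphic and bounded for $\Re s>1-\delta_0$, a fractional power of a product of Hecke $L$-functions of $\K$ (those governed by the splitting condition defining $\P_\K$), so the continuation and polynomial-growth bounds come from their classical zero-free regions and convexity estimates. If one insists on assuming only holomorphy and non-vanishing on the closed line $\Re s=1$, with no growth hypothesis, then the contour cannot be pushed, and I would instead argue in the style of Wiener--Ikehara and Delange: smooth $\sum_{n\le x}a_n$ against a Fej\'er kernel, use non-negativity of the $a_n$ (which holds here, since $a_n\ge 0$) to return to the sharp sum, and extract the same constant $h(1)/\Gamma(1-\alpha)$ from the behaviour of $f$ at $s=1$. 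Either route yields the stated asymptotic.
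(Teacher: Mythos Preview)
The paper does not give its own proof of this statement: it is quoted as ``a version of the Tauberian theorem as seen in \cite{RM}'' and then applied as a black box. So there is no in-paper argument to compare against.

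Your proposal is a correct outline of the standard proof. The Selberg--Delange contour argument you sketch is exactly how such results are usually established, and your local computation at $s=1$ via the Hankel representation $\frac{1}{2\pi i}\int_{\mathcal H'} e^w w^{\alpha-1}\,dw = 1/\Gamma(1-\alpha)$ gives the right constant. You are also right to flag that the literal hypothesis ``holomorphic in $\Re(s)\ge 1$'' is too weak to shift the contour, and that one must either (a) read it as holomorphy on a neighbourhood of the closed half-plane with polynomial growth --- which is indeed available here, since $f$ is built from Hecke $L$-functions with their classical zero-free regions --- or (b) fall back on a Delange--Ikehara Tauberian argument exploiting $a_n\ge 0$. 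The cited source \cite{RM} takes route (b); your route (a) is the Selberg--Delange alternative and in fact yields more, namely a genuine error term $O(x(\log x)^{-\alpha-1})$ rather than a bare asymptotic. Either way the conclusion is the stated one.
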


 In $\Re(s) > 1$, since $f$ does not vanish in this region, one may take logarithms
on either side of \eqref{f}. Now applying the series expansion for logarithms, we get
$$
\log f(s)
=  \sum_{(\p, \K_{\mathfrak{f}}/\K)=1 \atop (\p, \mathcal{C}\O_{\K})=\O_{\K}} \frac{1}{\N\p^s} + \sum_{(\p, \K_{\mathfrak{f}}/\K)=1 \atop (\p, \mathcal{C}\O_{\K})=\O_{\K}} \sum_{m\ge 2} \frac{(-1)^{m+1}}{m\N\p^{ms}}
= \sum_{(\p, \K_{\mathfrak{f}}/\K)=1, \atop (p, \mathcal{C}\O_{\K})=\O_{\K}} \frac{1}{\N\p^s} + \theta(s)
$$
where $$\theta(s) = \sum_{(\p, \K_{\mathfrak{f}}/\K)=1 \atop (\p, \mathcal{C}\O_{\K})=\O_{\K}} \sum_{m\ge 2} \frac{(-1)^{m+1}}{m\N\p^{ms}} .$$
Note that  for $\Re(s) = \sigma$,
$$
\left|\theta(s)  \right| \le \sum_\p \sum_{m\ge 2} \frac{1}{m\N\p^{m\sigma}} \le \sum_\p \frac{1}{\N\p^{\sigma} (\N\p^{\sigma}-1)} \le n_{\K} \sum_p  \frac{1}{p^{\sigma} (p^{\sigma}-1)}
$$
where $n_{\K} = [\K:\Q]$.
Therefore, $\theta(s)$ is holomorphic on $\Re(s) > 1/2$.
By orthogonality of generalised Dirichlet characters modulo $\mathfrak{f}$ we get
$$
\sum_{(\p, \K_{\mathfrak{f}}/\K)=1 \atop (\p, \mathcal{C}\O_{\K})=\O_{\K}} \frac{1}{\N\p^s} = \frac{1}{|H_{\mathfrak{f}}(\K)|}\sum_{(\p, \mathcal{C}\O_{\K})=\O_{\K}} \sum_{\chi \bmod \mathfrak{f}}\frac{\chi(\p)}{\N\p^s}.
$$
In $\Re(s) > 1$ we can interchange the sums to get
\begin{eqnarray*}
\sum_{(\p, \K_{\mathfrak{f}}/\K)=1 \atop (\p, \mathcal{C}\O_{\K})=\O_{\K}} \frac{1}{\N\p^s}
& = & \frac{1}{|H_{\mathfrak{f}}(\K)|} \sum_{\chi \bmod \mathfrak{f}} \sum_{ (\p, \mathcal{C}\O_{\K})=\O_{\K}} \frac{\chi(\p)}{\N\p^s}\\
& = &
 \frac{1}{|H_{\mathfrak{f}}(\K)|} \sum_{\chi \bmod \mathfrak{f}} \sum_{\p} \frac{\chi(\p)}{\N\p^s} +  \frac{1}{|H_{\mathfrak{f}}(\K)|} \sum_{\chi \bmod \mathfrak{f}} \sum_{\p \mid \mathcal{C}\O_{\K}}\frac{\chi(\p)}{\N\p^s}.
\end{eqnarray*}
However this now gives us
\begin{eqnarray}\label{psum}
\sum_{(\p, \K_{\mathfrak{f}}/\K)=1 \atop (\p, \mathcal{C}\O_{\K})=\O_{\K}} \frac{1}{\N\p^s}  =  \frac{1}{|H_{\mathfrak{f}}(\K)|} \sum_{\chi \bmod \mathfrak{f}} (\log L(s,\chi) + \theta_{1,\chi}(s))
\end{eqnarray}
where $\theta_{1, \chi}(s)$ is holomorphic on $\Re(s) \ge 1/2$ for all $\chi$ modulo $\mathfrak{f}$ (by the same argument
as seen above for $f(s)$).
Since $L(s,\chi_0)$ extends to a meromorphic function on $\C$ with only a simple pole at $s=1$
we have
$$
L(s, \chi_0) = \frac{\theta_2(s)}{(s-1)}
$$
with $\theta_2(s)$ entire. We have
$$
\theta_2(s) = (s-1)L(s, \chi_0) = (s-1)\zeta_{\K}(s) \prod_{\p \mid \mathfrak{f}} \left( 1- \frac{1}{\N\p^s} \right)
$$
and the product
$$
\prod_{\p \mid \mathfrak{f}} \left( 1- \frac{1}{\N\p^s} \right)
$$
is entire and non-zero in $\Re(s) \ge 1/2$.
Now by the zero free region for $\zeta_{\K}(s)$ (see lemma 8.1 and 8.2 of \cite{LO}),
we have a simply connected region containing $\Re(s) \ge 1$ in which $\theta_2(s)$
is non-zero. Therefore
$$
\log L(s, \chi_0) = \log \frac{1}{(s-1)} + \theta_3(s)
$$
where $\theta_3(s) = \log \theta_2(s)$ which is holomorphic in $\Re(s) \ge 1$.
For $\chi \neq \chi_0$,
$L(s, \chi)$ extends to an entire function (see corollary 8.6 on page 503 of \cite{JN}). Again by the zero free regions for each $L(s,\chi)$ (from the zero free region for $\zeta_{\K_{\mathfrak{f}}}(s)$ and the factorisation $\zeta_{\K_{\mathfrak{f}}}(s) = \prod_{\chi \bmod \mathfrak{f}} L(s,\chi^*)$)
we have a simply connected region containing $\Re(s) \ge 1$ in which $L(s, \chi)$ is non-zero.
In this region it follows that $\log L(s, \chi)$ can be defined and is holomorphic.
Combining all these observations  and substituting in \eqref{psum} we get
$$
\sum_{(\p, \K_{\mathfrak{f}}/\K)=1 \atop (\p, \mathcal{C}\O_{\K})=\O_{\K}} \frac{1}{\N\p^s}  =  -\frac{1}{|H_{\mathfrak{f}}(\K)|} \log(s-1) + \theta_4(s)
$$
where $\theta_4(s)$ is holomorphic on $\Re(s) \ge 1$.
Taking exponentials, we get
$$
f(s) = \frac{h(s)}{(s-1)^{\frac{1}{|H_{\mathfrak{f}}(\K)|}}}
$$
with $h(s) = e^{\theta(s)+\theta_4(s)}$ holomorphic in $\Re(s) \ge 1$ and non-zero there.
We can now apply the above Tauberian theorem to deduce that
$$
\#\{ \alpha\O_{\K} :\alpha\O_{\K}\in \W(X) \}  \sim  \frac{d_{\mathcal{C}}(1-1/|H_{\mathfrak{f}}(\K)|)X}{(\log X)^{1-\frac{1}{|H_{\mathfrak{f}}(\K)|}}}.
$$
Therefore, we have
$$
\mathcal{F}(X) \sim \frac{d_{\mathcal{C}}(1-1/|H_{\mathfrak{f}}(\K)|)X}{(\log X)^{1-\frac{1}{|H_{\mathfrak{f}}(\K)|}}}.
$$

\section{Lower bound for
$2^{{\rm rk}_4({\rm Cl}_{\L})}$.}\label{lowerbound}

Throughout this section we assume $\L \in \mathcal{F}$.
\begin{defn}
We say that a class $[\a]$ in the group $Cl_{\L}$ is strongly ambiguous if it contains
an ideal $\a$ such that $\a = \sigma(\a)$ for the generator $\sigma \in {\rm Gal}(\L/\K)$.
We denote the subgroup of all such classes in the class group by ${\rm Am}_{st}(\L/\K)$.
\end{defn}

We now recall the class number formula for strongly ambiguous classes.

\begin{thm}[{\rm see \cite{FL}}]
The number of strongly ambiguous classes is given by
$$
\#{\rm Am}_{st}(\L/\K) = h(\K) \cdot \frac{2^{t_{1,\L}+t_{2,\L}-1}}{[\O_{\K}^* : N_{\L/\K}(\O_{\L}^*)]}
$$
where $t_{1,\L}$ is the number of prime ideals in $\O_{\K}$ ramified in $\L$ and $t_{2,\L}$ is the number of  primes at infinity of $\K$ which ramify in $\L$.
\end{thm}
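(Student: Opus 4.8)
The plan is to derive the formula from two standard long exact sequences in Galois cohomology together with Chevalley's ambiguous class number formula, and then to specialise to degree $2$. Throughout, write $G={\rm Gal}(\L/\K)=\langle\sigma\rangle$, of order $2$, let $I_\L$ and $P_\L$ be the groups of nonzero fractional ideals and of principal fractional ideals of $\O_\L$, and let $(-)^G$ denote $G$-invariants.

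First I would observe that a class of ${\rm Cl}_\L$ belongs to ${\rm Am}_{st}(\L/\K)$ exactly when it is the image of a $G$-fixed ideal under $I_\L\twoheadrightarrow{\rm Cl}_\L$, that this image lies in ${\rm Cl}_\L^G$, and that the kernel of $I_\L^G\to{\rm Cl}_\L$ is $I_\L^G\cap P_\L=P_\L^G$; hence ${\rm Am}_{st}(\L/\K)\cong I_\L^G/P_\L^G$. Taking $G$-cohomology of $0\to P_\L\to I_\L\to{\rm Cl}_\L\to 0$ and using that $I_\L$ is a permutation $G$-module (so $H^1(G,I_\L)=0$ by Shapiro's lemma, the decomposition groups being finite) gives
$$0\longrightarrow P_\L^G\longrightarrow I_\L^G\longrightarrow {\rm Cl}_\L^G\longrightarrow H^1(G,P_\L)\longrightarrow 0,$$
so that $\#{\rm Am}_{st}(\L/\K)=\#{\rm Cl}_\L^G/\#H^1(G,P_\L)$. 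Next I would compute $H^1(G,P_\L)$ from $1\to\O_\L^*\to\L^*\to P_\L\to 1$: since $G$ is cyclic, all the cohomology is $2$-periodic Tate cohomology, Hilbert's Theorem $90$ gives $\widehat H^1(G,\L^*)=0$, and using $\widehat H^2\cong\widehat H^0$ for a group of order $2$ one identifies
$$H^1(G,P_\L)\;\cong\;\ker\!\Big(\tfrac{\O_\K^*}{N_{\L/\K}(\O_\L^*)}\longrightarrow\tfrac{\K^*}{N_{\L/\K}(\L^*)}\Big)\;=\;\frac{\O_\K^*\cap N_{\L/\K}(\L^*)}{N_{\L/\K}(\O_\L^*)},$$
whence $\#H^1(G,P_\L)=[\,\O_\K^*\cap N_{\L/\K}(\L^*):N_{\L/\K}(\O_\L^*)\,]$.

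The remaining input is Chevalley's ambiguous class number formula $\#{\rm Cl}_\L^G=h(\K)\prod_v e_v\big/\big([\L:\K]\,[\,\O_\K^*:\O_\K^*\cap N_{\L/\K}(\L^*)\,]\big)$, the product being over all places $v$ of $\K$ with $e_v$ the ramification index in $\L/\K$. I would simply quote this, as it is the standard consequence of class field theory (the norm index $[C_\K:N_{\L/\K}C_\L]=[\L:\K]$ for cyclic $\L/\K$, fed into the idele-class-group analogues of the sequences above). Dividing, the indices telescope, $[\,\O_\K^*:\O_\K^*\cap N_{\L/\K}(\L^*)\,]\cdot[\,\O_\K^*\cap N_{\L/\K}(\L^*):N_{\L/\K}(\O_\L^*)\,]=[\,\O_\K^*:N_{\L/\K}(\O_\L^*)\,]$, leaving $\#{\rm Am}_{st}(\L/\K)=h(\K)\prod_v e_v\big/\big([\L:\K]\,[\,\O_\K^*:N_{\L/\K}(\O_\L^*)\,]\big)$. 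For $[\L:\K]=2$ one has $e_v=2$ for each finite prime and each archimedean place of $\K$ ramifying in $\L$ and $e_v=1$ otherwise, so $\prod_v e_v=2^{t_{1,\L}+t_{2,\L}}$, and the claimed equality follows.

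The one genuinely nonelementary step is Chevalley's formula; the rest is cohomological bookkeeping. If one instead wanted a self-contained argument in the quadratic case, one would count ambiguous ideals by hand: $I_\L^G$ is generated over the group $\iota(I_\K)$ of extended ideals by the $t_{1,\L}$ ramified primes $\mathfrak{P}_1,\dots,\mathfrak{P}_{t_{1,\L}}$, subject to $\mathfrak{P}_i^2=\iota(\mathfrak{p}_i)$, so $I_\L^G/\iota(I_\K)\cong(\Z/2)^{t_{1,\L}}$; one then has to describe $\iota({\rm Cl}_\K)\le{\rm Cl}_\L$ (of order $h(\K)$ up to the capitulation kernel), determine precisely which products of the $\mathfrak{P}_i$ become principal in $\L$, and check that the capitulation kernel, the behaviour on $\O_\K^*$ of the archimedean places ramifying in $\L$, and Hilbert $90$ for units together contribute exactly the factor $2^{\,t_{2,\L}-1}/[\,\O_\K^*:N_{\L/\K}(\O_\L^*)\,]$. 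That reconciliation of the ideal-theoretic and unit-theoretic contributions, which is in effect Chevalley's proof in disguise, is the step I expect to be the main obstacle.
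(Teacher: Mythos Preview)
The paper does not give a proof of this theorem at all: it is stated with the citation \cite{FL} (Lemmermeyer, ``The ambiguous class number formula revisited'') and used as a black box. Your argument is correct and is essentially the standard derivation one finds in that reference---express ${\rm Am}_{st}(\L/\K)\cong I_\L^G/P_\L^G$, read off $\#{\rm Am}_{st}=\#{\rm Cl}_\L^G/\#H^1(G,P_\L)$ from the cohomology of $0\to P_\L\to I_\L\to{\rm Cl}_\L\to 0$ (using $H^1(G,I_\L)=0$), identify $H^1(G,P_\L)$ with $(\O_\K^*\cap N_{\L/\K}\L^*)/N_{\L/\K}\O_\L^*$ via Hilbert~90 and $2$-periodicity, and then feed in Chevalley's formula for $\#{\rm Cl}_\L^G$ so that the two unit indices telescope. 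Since the authors simply quote the result, your write-up actually goes further than the paper does.
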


The subgroup of classes generated by the ramified primes in $\L/\K$ is contained in
the group of strongly ambiguous classes of the class group. On the other hand
suppose we consider a strongly ambiguous class represented by a fractional ideal
$\a$ of $\L$ such that $\a = \sigma(\a)$, then
let $\a = \p_1^{s_1} \cdots \p_k^{s_k}$ with all the $\p_i$
being distinct prime ideals of $\O_{\L}$. Since $\sigma(\a) = \a$,
for every prime $\p_i$, $\p_i = \sigma( \p_j)$ for some $1 \le j \le k$
and $s_i =s_j$ (by unique factorisation of ideals in a Dedekind domain and distinctness of the $\sigma(\p_j)$'s). This implies that if $\p_i$ lies above a prime of $\K$ that splits in $\L$,
$\p_i\p_j  = \p_i \sigma(\p_i)$ divides $\a$ and it  is principal. If $\p_i = \sigma(\p_i)$ it lies above a prime of $\K$ that is either inert or ramified.
If $\p_i$ is above an inert prime, it is already principal since $\O_{\K}$ is a PID. Any other $\p_i$
must lie above a ramified prime. This implies that ${\rm Am}_{st}(\L/\K)$ lies in the
subgroup generated by the classes of ramified primes of $\L/\K$.

\begin{rmk}
We note that in our context $h(\K)=1$ and we set $[\O_{\K}^* : N_{\L/\K}(\O_{\L}^*)] = 2^{\tilde{e}_{\L}}$.
\end{rmk}

\begin{defn}
We will use $\tilde{\mathfrak{B}}_{\L}$ to denote the set of ideals
$$
\{ \mathfrak{P}_1^{e_1}\mathfrak{P}_2^{e_2}\cdots \mathfrak{P}_{t_{1,\L}}^{e_{t_{1,\L}}} : \mathfrak{P}_i \mid 4\alpha\O_{\L} \text{ prime, } e_i \in \{0,1\} \text{ for all } i \in \{1,\ldots t_{1,\L}\} \}.
$$
Further let
${\mathfrak{B}}_{\L}$  denote the set of ideals
$$
\{ \mathfrak{P}_1^{e_1}\mathfrak{P}_2^{e_2}\cdots \mathfrak{P}_{t_{1,\L}-1}^{e_{t_{1,\L}-1}} : \mathfrak{P}_i \mid \alpha\O_{\L} \text{ prime, } e_i \in \{0,1\} \text{ for all } i \in \{1,\ldots t_{1,\L}-1\} \}.
$$
\end{defn}
\begin{rmk}
We have  ${\rm Am}_{st}(\L/\K) = \{ [\b] : \b \in \tilde{\mathfrak{B}}_{\L} \}$.
Given a $[\b] \in {\rm Am}_{st}(\L/\K)$ there is a bijection between
$$
\{ \b_1 \in \tilde{\mathfrak{B}}_{\L} : \b_1 \text{ principal }\} \leftrightarrow \{ \b_2 \in  \tilde{\mathfrak{B}}_{\L} :  \b_2 \in [\b]\}.
$$
The map from the left to right is obtained by sending  $\b_1$ to $\b\b_1/{\rm gcd}(\b, \b_1)^2$
Further, the inverse map is also given by sending $\b_2$ to $\b\b_2/{\rm gcd}(\b, \b_2)^2$.
Since
$$
\frac{\b \cdot \b\b_1/{\rm gcd}(\b, \b_1)^2}{{\rm gcd}(\b, \b\b_1/{\rm gcd}(\b, \b_1)^2)^2} = \b_1
$$
these maps constitute bijections.
We now conclude
that each class $[\b] \in \tilde{\mathfrak{B}}_{\L}$ has exactly $k$ representatives in $\tilde{\mathfrak{B}}_{\L}$ where
$$
k = \#  \{ \b_1 \in \tilde{\mathfrak{B}}_{\L} : \b_1 \text{ principal }\}.
$$
We now observe that
$$
\# \tilde{\mathfrak{B}}_{\L} = \sum_{[\b] \in {\rm Am}_{st}(\L/\K)} \# \{ \b_2 \in  \tilde{\mathfrak{B}}_{\L} :  \b_2 \in [\b]\} = k \cdot \#{\rm Am}_{st}(\L/\K).
$$
It follows from the ambiguous class number formula that
$$
k = 2^{1-t_{2,\L}}[\O_{\K}^* : N_{\L/\K}(\O_{\L}^*)] = 2^{\tilde{e}_\L+1 - t_{2,\L}}.
$$
Further, we have
$$
[\O_{\K}^* : N_{\L/\K}(\O_{\L}^*)] \le  [\O_{\K}^* :  (\O_{\K}^*)^2]  \le 2^{r_\K+1}.
$$
where $r_\K$ is the unit rank of $\O_{\K}^*$.
So $\tilde{e}_\L \le {r_\K+1}$.
\end{rmk}
\begin{lem}\label{8}
For any $\L = \L_{\alpha} \in \mathcal{F}$,
$$
2^{{\rm rk}_4({{\rm Cl}_{\L}})} \ge \frac{1}{2^{r_\K+2}} |\{ \mathfrak{b} \in \mathfrak{B}_{\L} : [\b]\in {\rm Cl}_{\L},  [\mathfrak{b}] = [\mathfrak{a}^2 ] \text{ for  some non-zero fractional ideal } \mathfrak{a} \text{ of } \O_{\L}  \}|.
$$
\end{lem}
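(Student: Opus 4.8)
The plan is to realise $2^{{\rm rk}_4({\rm Cl}_{\L})}$ as the order of a subgroup of ${\rm Cl}_{\L}$ and then to identify the set counted on the right-hand side with the preimage, under a suitable homomorphism, of a subgroup of that group, controlling the size of the kernel. The first ingredient is the elementary fact that for a finite abelian group $A$ one has $|A^2/A^4| = |A^2 \cap A[2]|$: the squaring endomorphism of $A^2$ is surjective onto $A^4$ with kernel $A^2 \cap A[2]$, so $|A^2| = |A^4|\cdot|A^2 \cap A[2]|$, and both $A^2/A^4$ and $A^2 \cap A[2]$ are $\mathbb{F}_2$-vector spaces of the common dimension ${\rm rk}_4(A)$. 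Taking $A = {\rm Cl}_{\L}$ gives $2^{{\rm rk}_4({\rm Cl}_{\L})} = |{\rm Cl}_{\L}^2 \cap {\rm Cl}_{\L}[2]|$.

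Next I would equip $\mathfrak{B}_{\L}$ with the composition law $\b_1 \star \b_2 = \b_1\b_2/{\rm gcd}(\b_1,\b_2)^2$ already used for $\tilde{\mathfrak{B}}_{\L}$ in the remark above; this makes $\mathfrak{B}_{\L}$ an elementary abelian $2$-group, and clearly $\mathfrak{B}_{\L} \subseteq \tilde{\mathfrak{B}}_{\L}$. Every prime $\mathfrak{P}$ occurring in the definition of $\mathfrak{B}_{\L}$ is ramified in $\L/\K$, so $\mathfrak{P}^2 = (\mathfrak{P}\cap\O_{\K})\O_{\L}$, which is principal because $\O_{\K}$ is a PID; consequently $\phi : \mathfrak{B}_{\L} \to {\rm Cl}_{\L}$, $\b \mapsto [\b]$, is a group homomorphism, and its image lies in ${\rm Am}_{st}(\L/\K)$, hence in ${\rm Cl}_{\L}[2]$. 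Since the condition $[\b] = [\a^2]$ for some nonzero fractional ideal $\a$ is exactly $[\b] \in {\rm Cl}_{\L}^2$, the set whose cardinality appears in the statement is $\phi^{-1}({\rm Cl}_{\L}^2)$, a subgroup of $\mathfrak{B}_{\L}$; writing $N$ for its cardinality and restricting $\phi$ to this subgroup (kernel $\ker\phi$, image $\phi(\mathfrak{B}_{\L}) \cap {\rm Cl}_{\L}^2$) gives $N = |\ker\phi| \cdot |\phi(\mathfrak{B}_{\L}) \cap {\rm Cl}_{\L}^2|$.

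To finish, observe that $\phi(\mathfrak{B}_{\L}) \subseteq {\rm Cl}_{\L}[2]$ forces $\phi(\mathfrak{B}_{\L}) \cap {\rm Cl}_{\L}^2 \subseteq {\rm Cl}_{\L}^2 \cap {\rm Cl}_{\L}[2]$, so the first step yields $|\phi(\mathfrak{B}_{\L}) \cap {\rm Cl}_{\L}^2| \le 2^{{\rm rk}_4({\rm Cl}_{\L})}$ and hence $N \le |\ker\phi| \cdot 2^{{\rm rk}_4({\rm Cl}_{\L})}$. Finally $\ker\phi = \{\b \in \mathfrak{B}_{\L} : \b \text{ principal}\} \subseteq \{\b \in \tilde{\mathfrak{B}}_{\L} : \b \text{ principal}\}$, and the latter set has cardinality $k = 2^{\tilde{e}_{\L}+1-t_{2,\L}} \le 2^{\tilde{e}_{\L}+1} \le 2^{r_\K+2}$ by the remark following the ambiguous class number formula (recall $\tilde{e}_{\L} \le r_\K+1$). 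Dividing, $2^{{\rm rk}_4({\rm Cl}_{\L})} \ge N/2^{r_\K+2}$, which is the assertion.

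The only step needing care is the bookkeeping: verifying that $\phi$ really respects the operation $\star$ (this is precisely where the PID hypothesis on $\O_{\K}$ and the ramification of the chosen primes are used, through $\mathfrak{P}^2$ being principal), and correctly applying the counting identity $|\phi^{-1}(H)| = |\ker\phi|\cdot|H \cap \phi(\mathfrak{B}_{\L})|$ together with the containment $\mathfrak{B}_{\L} \subseteq \tilde{\mathfrak{B}}_{\L}$ needed for the kernel bound. There is no analytic input and nothing structurally deep here; the whole argument rests on the identity $2^{{\rm rk}_4} = |{\rm Cl}_{\L}^2 \cap {\rm Cl}_{\L}[2]|$ and the fact that ${\rm image}(\phi) \subseteq {\rm Cl}_{\L}[2]$.
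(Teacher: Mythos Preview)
Your argument is correct and is essentially the same as the paper's: both identify $2^{{\rm rk}_4({\rm Cl}_{\L})}$ with $|{\rm Cl}_{\L}^2\cap{\rm Cl}_{\L}[2]|$ (the paper writes this as $|\{B^2:B^4=[\O_{\L}]\}|$), observe that the classes $[\b]$ for $\b\in\tilde{\mathfrak{B}}_{\L}$ lie in ${\rm Cl}_{\L}[2]$, and pass from classes to ideals using the fiber bound $k=2^{\tilde e_{\L}+1-t_{2,\L}}\le 2^{r_\K+2}$ from the preceding remark. The only cosmetic difference is that you phrase the fiber counting via an explicit homomorphism $\phi$ and the identity $|\phi^{-1}({\rm Cl}_{\L}^2)|=|\ker\phi|\cdot|\phi(\mathfrak{B}_{\L})\cap{\rm Cl}_{\L}^2|$, whereas the paper invokes the class/representative count from the remark directly.
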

\begin{proof}
By definition
$$
{\rm rk}_4( {\rm Cl}_{\L}) = {\rm dim}_{\mathbb{F}_2} \left({\rm Cl}_{\L}^2/{\rm Cl}_{\L}^4\right).
$$
Therefore
$$
2^{{\rm rk}_4( {\rm Cl}_{\L})} = | {\rm Cl}_{\L}^2/{\rm Cl}_{\L}^4 | = |\{B^2 \in {\rm Cl}_{\L} : B^4 = [\O_{\L}]\}|.
$$
However
$$
\{B^2 \in {\rm Cl}_{\L} : B^4 = [\O_{\L}]\} \supseteq \{ [\b]\in {\rm Cl}_{\L} : \mathfrak{b} \in \tilde{\mathfrak{B}}_{\L}, [\mathfrak{b}] = [\mathfrak{a}^2] \text{ for some non-zero fractional ideal } \mathfrak{a} \text{ of } \O_{\L}\}.
$$
Therefore we have
\begin{eqnarray*}
2^{{\rm rk}_4({{\rm Cl}_{\L}})} & \ge & \frac{1}{2^{r_\K+2}} |\{ \mathfrak{b} \in \tilde{\mathfrak{B}}_{\L} :  [\b]\in {\rm Cl}_{\L}, [\mathfrak{b}] = [\mathfrak{a}^2 ] \text{ for  some non-zero fractional ideal } \mathfrak{a} \text{ of } \O_{\L}  \}| \\
& \ge & \frac{1}{2^{r_\K+2}} |\{ \mathfrak{b} \in {\mathfrak{B}}_{\L} :  [\b]\in {\rm Cl}_{\L}, [\mathfrak{b}] = [\mathfrak{a}^2 ] \text{ for  some non-zero fractional ideal } \mathfrak{a} \text{ of } \O_{\L}  \}|.
 \end{eqnarray*}
\end{proof}
We now define an analogue of the usual Hilbert symbol and some of its properties
with an aim to characterise the lower bound in \lemref{8}.
\begin{defn}
For $a, b \in \K^*$, we define
\begin{eqnarray*}
(a | b) ~:~ \K^* \times \K^* \to \{0,1\}
\end{eqnarray*}
where
\begin{eqnarray*}
(a|b) = \begin{cases}
1 &\text{ if } x^2 = ay^2 + bz^2 \text{ has a non-zero solution in }\K^3\\
0 & \text{otherwise .}
\end{cases}
\end{eqnarray*}
\end{defn}

The next proposition shows the equivalence of three useful properties
which will be used in simplifying the bound in \lemref{8}.
\begin{prop}\label{equiv}
For any $\L = \L_{\alpha} \in \mathcal{F}$ and $\mathfrak{b} \in \mathfrak{B}_{\L}$, we denote by $b \O_{\K}$ the ideal $\N_{\L/\K}(\mathfrak{b})$.
Then the following are equivalent:
\begin{enumerate}
\item  $[\mathfrak{b}] = [\mathfrak{a}^2]$ for some non-zero fractional ideal $\mathfrak{a}$ of $\O_{\L}$.
\item  $b\varepsilon$ is a norm of an element in $\L^*$ for some $\varepsilon \in \O_{\K}^*$.
\item $(\alpha | b\varepsilon)=1.$
\end{enumerate}
\end{prop}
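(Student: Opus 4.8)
The plan is to establish the equivalences by treating $(2)\Leftrightarrow(3)$ as an essentially formal reformulation of the symbol $(\cdot\,|\,\cdot)$, and then proving $(1)\Leftrightarrow(2)$ by an ideal-theoretic argument that exploits the hypothesis that $\O_{\K}$ is a principal ideal domain. Throughout one uses that $\L_{\alpha}=\K(\sqrt{\alpha})$ is a genuine quadratic extension, so $\alpha$ is not a square in $\K$ (indeed $\alpha\O_{\K}$ is a nontrivial squarefree ideal), and that changing the chosen generator $b$ of $\N_{\L/\K}(\mathfrak{b})$ only alters $b$ by a unit of $\O_{\K}$, which is harmless since $(2)$ and $(3)$ quantify over $\varepsilon\in\O_{\K}^*$.

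For $(2)\Leftrightarrow(3)$, fix $\varepsilon\in\O_{\K}^*$. A nonzero solution in $\K^3$ of $x^2=\alpha y^2+b\varepsilon z^2$ must have $z\neq 0$, since $z=0$ would force $\alpha=(x/y)^2$ with $y\neq 0$, contradicting that $\alpha$ is a nonsquare; dividing by $z^2$ turns such a solution into the identity $b\varepsilon=(x/z)^2-\alpha (y/z)^2=\N_{\L/\K}\bigl((x/z)+(y/z)\sqrt{\alpha}\bigr)$, and conversely any representation $b\varepsilon=\N_{\L/\K}(u+v\sqrt{\alpha})=u^2-\alpha v^2$ gives the solution $(u,v,1)$. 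Hence $(\alpha\,|\,b\varepsilon)=1$ if and only if $b\varepsilon\in\N_{\L/\K}(\L^*)$, and quantifying over $\varepsilon$ yields $(2)\Leftrightarrow(3)$.

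For $(1)\Rightarrow(2)$: if $[\mathfrak{b}]=[\mathfrak{a}^2]$ then $\mathfrak{b}=\mathfrak{a}^2\gamma\O_{\L}$ for some $\gamma\in\L^*$. Since $\O_{\K}$ is a PID, write $\N_{\L/\K}(\mathfrak{a})=c\,\O_{\K}$ with $c\in\K^*$; applying $\N_{\L/\K}$ to the ideal identity gives $b\,\O_{\K}=c^2\,\N_{\L/\K}(\gamma)\,\O_{\K}$, so $b=w\,c^2\,\N_{\L/\K}(\gamma)$ for some $w\in\O_{\K}^*$. As $\N_{\L/\K}(c)=c^2$ for $c\in\K^*$, this reads $w^{-1}b=\N_{\L/\K}(c\gamma)$, so $(2)$ holds with $\varepsilon=w^{-1}$. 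For $(2)\Rightarrow(1)$: suppose $b\varepsilon=\N_{\L/\K}(\beta)$ with $\beta\in\L^*$, $\varepsilon\in\O_{\K}^*$. The fractional ideal $\mathfrak{c}:=\mathfrak{b}\,(\beta\O_{\L})^{-1}$ has $\N_{\L/\K}(\mathfrak{c})=b\,\O_{\K}\cdot(b\varepsilon\,\O_{\K})^{-1}=\O_{\K}$. Computing valuations prime by prime over each prime $\p$ of $\K$: at an inert $\p$ the local norm has degree $2$, forcing valuation $0$; at a ramified $\p$, with $\p\O_{\L}=\mathfrak{P}^2$ and $\N_{\L/\K}(\mathfrak{P})=\p$, again the valuation is $0$; at a split $\p$, with $\p\O_{\L}=\mathfrak{P}\bar{\mathfrak{P}}$, the two valuations are negatives of one another, and since $\p\O_{\L}$ is principal ($\O_{\K}$ a PID) one has $[\bar{\mathfrak{P}}]=[\mathfrak{P}]^{-1}$, so the contribution $\mathfrak{P}^{v}\bar{\mathfrak{P}}^{-v}$ has class $[\mathfrak{P}]^{2v}$. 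Multiplying over all split $\p$ exhibits $[\mathfrak{c}]$ as a square in ${\rm Cl}_{\L}$; since $\beta\O_{\L}$ is principal, $[\mathfrak{b}]=[\mathfrak{c}]$ is a square class, i.e. $[\mathfrak{b}]=[\mathfrak{a}^2]$ for a suitable fractional ideal $\mathfrak{a}$, which is $(1)$.

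The only step requiring genuine care is $(2)\Rightarrow(1)$: one must correctly split the norm-one ideal $\mathfrak{c}$ according to the three decomposition types of primes of $\K$ in $\L$, and the principal-ideal-domain hypothesis on $\O_{\K}$ is used precisely to guarantee that $\p\O_{\L}=\mathfrak{P}\bar{\mathfrak{P}}$ is principal at split primes — without it one would only conclude that $[\mathfrak{b}]$ lies in the subgroup generated by the classes $[\mathfrak{P}\bar{\mathfrak{P}}^{-1}]$, not that it is a square. Everything else is routine bookkeeping with relative norms and units, and the use of condition $\ref{two}$ here is exactly what forces us to restrict to a class number one base field.
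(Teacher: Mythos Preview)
Your proof is correct. The equivalence $(2)\Leftrightarrow(3)$ and the implication $(1)\Rightarrow(2)$ match the paper's argument essentially verbatim. For $(2)\Rightarrow(1)$ you take a genuinely different route: the paper clears denominators, writing $\beta=\gamma_5/\gamma_6$ with $\gamma_5\in\O_{\L}$ and $\gamma_6\in\O_{\K}$, strips off all primes $\p$ of $\O_{\K}$ with $\p\O_{\L}\mid\gamma_5\O_{\L}$, and then argues directly that the remaining integral ideal $\gamma_5\O_{\L}$ factors as $\mathfrak{b}\mathfrak{a}^2$. You instead form the norm-one fractional ideal $\mathfrak{c}=\mathfrak{b}(\beta\O_{\L})^{-1}$ and run a clean local analysis over inert, ramified, and split primes of $\K$ to see that $[\mathfrak{c}]$ is a square in ${\rm Cl}_{\L}$. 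Your approach is more conceptual and makes the use of the PID hypothesis on $\O_{\K}$ especially transparent (it is exactly what makes $\mathfrak{P}\bar{\mathfrak{P}}$ principal at split primes); the paper's approach is more hands-on but has the minor advantage of producing an explicit integral $\mathfrak{a}$ rather than just a class.
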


\begin{proof}
$(1) \iff (2) $:\\
For the forward implication, suppose $(\gamma_2) = \a^2 \b^{-1}$. Taking norms on both sides we get
$$
\N_{\L/\K}(\gamma_2\O_{\L}) = b\frac{\N_{\L/\K}(\a^2)}{b^2}
$$
where $b\O_{\K} = \N_{\L/\K}(\b)$. Since $\O_\K$ is a PID we have a $\gamma_3 \in \K^*$ such that $(\gamma_3) = \N_{\L/\K}(\a)$.
By Lemma 13 on page 25 of \cite{SwD}, we have
$$
\N_{\L/\K}(\gamma_2 \O_{\L}) =
N_{\L/\K} (\gamma_2) \O_{\K}.
$$
Therefore
$$
\N_{\L/\K}(\gamma_2 \O_{\L}) =
N_{\L/\K} (\gamma_2) \O_{\K} = b \frac{ \gamma_3^2}{b^2} \O_{\K}= b  N_{\L/\K}(\gamma_3/{b}) \O_{\K}.
$$
This gives us assertion (2).\\
\noindent
Conversely, let $b\varepsilon = N_{\L/\K}(\gamma_4)$ for some $\gamma_4 \in  {\L^*}$.
We know that $\gamma_4$ must be of the form $\gamma_5/\gamma_6$ where $\gamma_5 \in \O_{\L}$
and $\gamma_6 \in \O_{\K}$. Rationalising denominators, we get
$$
b\varepsilon \gamma_6^2 = N_{\L/\K}(\gamma_5).
$$
If a prime ideal $\p$ of $\O_{\K}$ divides $\gamma_5 \O_{\L}$, that is, $\p\O_{\L} \mid \gamma_5 \O_{\L}$,
then $\p^2 \mid b\varepsilon \gamma_6^2 \O_{\K}$. Since $b\O_{\K}$ is square-free, $\p^2 \mid \gamma_6^2 \O_{\K}$.
Therefore by dividing on both sides by a generator of the principal ideal $\p^2$, we may assume that $\p\O_{\L} \nmid \gamma_5 \O_{\L}$
for any prime ideal $\p$ of $\O_{\K}$.
However for every prime ideal $\p$ of $\O_{\K}$ dividing $\gamma_6\O_{\K}$ we must have $\p^2 \mid \N_{\L/\K}(\gamma_5\O_{\L})$.
Therefore there exists an ideal $I \subset \O_{\L}$ with $I^2 \mid \gamma_5 \O_{\L}$ and  $\N(I) = \p$.
Finally, there is a unique ideal of norm $b\O_{\K}$ given by some $\b \in \B_{\L}$. Combining the above
we get
$$
\gamma_5 \O_{\L} = \b\a^2 \text{ for some non-zero integral ideal } \a \text{ of } \O_{\L}.
$$
$(2) \iff (3)$:\\
For the forward implication, if $b\varepsilon$ is a norm in $\K$ of an element of $\L$, then there exist $ x,y \in \K$ such that  $x^2 - \alpha y^2 = b\varepsilon$. Therefore $(\alpha | b\varepsilon)=1$. Conversely, if $(\alpha | b\varepsilon)=1$, there exists a non-trivial tuple $(x,y,z) \in \K^3$ such that
$$
\alpha x^2 + b\varepsilon y^2 = z^2.
$$
Rewriting the same, we get
$$
 b\epsilon y^2 = z^2 - \alpha x^2.
$$
Since the ideal $\alpha\O_{\K}$ is square-free, $\sqrt{\alpha} \notin \K$. Therefore $y$ is non-zero.
Now dividing by $y^2$ we get the lemma.
\end{proof}

We will now prove some properties of the aforementioned analogue of the Hilbert symbol.
\begin{lem}\cite{FK07}\label{hprop}
We have $(a|b) = (b|a) = (a| -ab)$
and $(ac^2 |b) = (a|b)$.
\end{lem}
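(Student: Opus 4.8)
The final statement to prove is Lemma~\ref{hprop}, which lists standard properties of the symbol $(a|b)$: symmetry $(a|b)=(b|a)$, the identity $(a|b)=(a|-ab)$, and invariance under square factors $(ac^2|b)=(a|b)$. The plan is to reduce each identity to a direct manipulation of the defining conic $x^2=ay^2+bz^2$, exactly as for the classical Hilbert symbol, since the definition here is purely in terms of solvability over $\K$ and nowhere uses a local-global principle.

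First I would dispose of $(a|b)=(b|a)$: the equation $x^2=ay^2+bz^2$ is literally symmetric in the roles of $(a,y)$ and $(b,z)$, so a nonzero solution $(x,y,z)$ for one is a nonzero solution $(x,z,y)$ for the other; here "nonzero" means the triple is not the zero vector, and this property is preserved by permuting coordinates. Next, for $(ac^2|b)=(a|b)$ with $c\in\K^*$: given a nonzero solution of $x^2=ac^2y^2+bz^2$, the triple $(x, cy, z)$ solves $x^2=a(cy)^2+bz^2$ and is still nonzero because $c\ne 0$; conversely given $(x,y,z)$ solving $x^2=ay^2+bz^2$, the triple $(x, y/c, z)$ solves the scaled equation. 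So the two conics have $\K$-points simultaneously.

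The slightly less mechanical one is $(a|b)=(a|-ab)$. I would argue as follows. Suppose $(a|b)=1$, so there is a nonzero $(x,y,z)\in\K^3$ with $x^2-ay^2-bz^2=0$. I want a nonzero solution of $X^2-aY^2+abZ^2=0$. If $z\ne 0$, multiply the relation by $b$: then $bx^2 - aby^2 = b^2z^2$, i.e.\ $(bz)^2 = b x^2 - ab y^2$; hmm, this is a representation of $b$ again rather than of $-ab$. Better: rewrite $x^2 - ay^2 = bz^2$. Then multiply both sides by $a$: $a x^2 - a^2 y^2 = ab z^2$, so $(ay)^2 = a x^2 - ab z^2$, giving a nonzero solution $(X,Y,Z)=(ay, x, z)$ of $X^2 = aY^2 - abZ^2$ provided $z\ne 0$ (if $z\ne 0$ then $Z\ne 0$, so the triple is nonzero). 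If instead $z=0$, then $x^2=ay^2$ with $(x,y)\ne(0,0)$, which forces $a$ to be a square in $\K$, say $a=u^2$; but then $X^2-aY^2+abZ^2$ has the obvious nonzero solution $(u,1,0)$ — in fact $a$ being a square makes $(a|b')=1$ for every $b'$. This case analysis (handling the degenerate solutions where a coordinate vanishes) is the only place any care is needed; everything else is coordinate bookkeeping. Since Lemma~\ref{hprop} is quoted from \cite{FK07} and the symbol here is defined by the same conic, I would keep the write-up short and simply record these substitutions, noting that $a$ a square trivializes every case.

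The main obstacle, such as it is, is purely notational: making sure "nonzero solution in $\K^3$" is consistently interpreted as "not the zero vector" and checking in each substitution that the produced triple is genuinely nonzero — the degenerate branches ($z=0$ in the last identity, $c=0$ excluded by hypothesis, a coordinate vanishing forcing $a$ or $b$ to be a square) are where one could slip. None of this requires any input beyond elementary field arithmetic in $\K$, so no earlier result from the paper is needed.
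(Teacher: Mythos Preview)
Your proposal is correct and follows essentially the same approach as the paper. In fact, the paper suppresses the proof (it cites \cite{FK07} and the argument is left inside an \verb|\iffalse...\fi| block), but the hidden proof uses exactly the same elementary manipulation of the conic $ax^2+by^2=z^2$: multiply through by a power of $a$ and relabel variables to pass from $(a|b)$ to $(a|-ab)$, with symmetry and square-invariance declared obvious. Your case analysis at $z=0$ is more careful than necessary (the triple $(ay,x,z)$ is already nonzero whenever $(x,y,z)$ is, since $a\in\K^*$), but this only adds redundancy, not error.
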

\begin{rmk}
For an arbitrary squarefree integral ideal $I \subset \O_{\K}$, we say that an element $\gamma_3 \in \O_{\K}$, with $(\gamma_3\O_{\K}, I) = \O_{\K}$,
is a square modulo $I$ if it is a quadratic residue modulo every prime ideal $\p \mid I$.
\end{rmk}

\begin{lem}\label{splitting1}
For any ideal $\alpha\O_{\K} \in \W$,
we have
$$
2 \mid [(\O_{\K}/\p)^*:\O_{\K}^* \bmod \p], \text{ for all } \p \mid \alpha\O_{\K}.
$$
\end{lem}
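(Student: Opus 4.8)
The plan is to fix a prime $\p\mid\alpha\O_{\K}$ (so $\p\in\P_{\K}$, since $\alpha\O_{\K}\in\W$) and reduce the statement to two facts: (a) $\p\nmid 2\O_{\K}$, and (b) every unit of $\O_{\K}$ is a square modulo $\p$. Granting these, $(\O_{\K}/\p)^*$ is cyclic of even order $\N\p-1$ by (a), hence has a unique index-$2$ subgroup, namely $((\O_{\K}/\p)^*)^2$; by (b) this subgroup contains $\O_{\K}^*\bmod\p$, and so $2=[(\O_{\K}/\p)^*:((\O_{\K}/\p)^*)^2]$ divides $[(\O_{\K}/\p)^*:\O_{\K}^*\bmod\p]$, which is the assertion.

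Both (a) and (b) will be extracted from the splitting condition in the definition of $\P_{\K}$. Since $\p\in\P_{\K}$, $\p$ splits completely in $\K_{\mathfrak f}$. The extension $\M/\K$ is abelian (it is the compositum of the ray class field $\K_{4\O_{\K}}$ and the multiquadratic field $\K((\O_{\K}^*)^{1/2})$), and $\mathfrak f$ is by definition its conductor, so class field theory gives $\K((\O_{\K}^*)^{1/2})\subseteq\M\subseteq\K_{\mathfrak f}$. Hence $\p$ splits completely in $\K((\O_{\K}^*)^{1/2})/\K$; in particular it is unramified there and splits completely in every quadratic subextension $\K(\sqrt u)$, $u\in\O_{\K}^*$. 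This yields (b): if $\sqrt u\in\K$ then $u$ is the square of a unit, hence a square mod $\p$; otherwise $[\K(\sqrt u):\K]=2$, and granting (a) we have $\p\nmid 2u\O_{\K}$, so the order $\O_{\K}[\sqrt u]$ is maximal at $\p$ and Dedekind's criterion shows $\p$ splits completely exactly when $x^2-u$ splits mod $\p$, i.e.\ when $u$ is a square mod $\p$. Thus $\O_{\K}^*\bmod\p\subseteq((\O_{\K}/\p)^*)^2$.

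The crux is (a), which I expect to be the only delicate step, and I would deduce it from condition \ref{units}. Let $\varepsilon\in\O_{\K}^*$ have order $2$ modulo $\q^2$ for every $\q\mid 2\O_{\K}$. From $\varepsilon^2\equiv 1$ and $\varepsilon\not\equiv 1\pmod{\q^2}$ one first gets $\q\mid\varepsilon-1$ (otherwise $\varepsilon+1=(\varepsilon-1)+2$ and $\varepsilon-1$ would both be $\q$-units, making $\varepsilon^2-1$ a $\q$-unit), and then $v_{\q}(\varepsilon-1)=1$. A short local argument then shows $\K_{\q}(\sqrt\varepsilon)/\K_{\q}$ is ramified: if it were unramified (or trivial), then $\sqrt\varepsilon$ would reduce to a square root of $\bar\varepsilon=1$ in a residue field of characteristic $2$, hence to $1$, forcing $v(\sqrt\varepsilon-1)\ge 1$ and $v(\sqrt\varepsilon+1)\ge 1$ for the valuation $v$ extending $v_{\q}$, so that $v(\varepsilon-1)=v((\sqrt\varepsilon-1)(\sqrt\varepsilon+1))\ge 2$, contradicting $v_{\q}(\varepsilon-1)=1$. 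Thus $\K(\sqrt\varepsilon)/\K$ ramifies at every $\q\mid 2\O_{\K}$; since $\K(\sqrt\varepsilon)\subseteq\K((\O_{\K}^*)^{1/2})\subseteq\K_{\mathfrak f}$, every prime above $2$ ramifies in $\K_{\mathfrak f}/\K$, so a prime splitting completely in $\K_{\mathfrak f}$ — in particular $\p$ — cannot lie over $2$. This gives (a) and completes the proof.
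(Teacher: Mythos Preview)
Your proof is correct, and the overall strategy coincides with the paper's: show that every unit of $\O_{\K}$ is a quadratic residue modulo $\p$ by observing that $\p$ splits completely in each $\K(\sqrt{\varepsilon})\subseteq\K_{\mathfrak f}$, then use cyclicity of $(\O_{\K}/\p)^*$. The difference lies in how the application of Dedekind--Kummer is justified. The paper simply invokes the hypothesis $(\p,\mathcal{C}\O_{\K})=\O_{\K}$ built into the definition of $\P_{\K}$: since $\mathcal{C}$ is the product of the norms of the conductors of the orders $\O_{\K}[\sqrt{\varepsilon}]$ for $\varepsilon\in S\cup\{\zeta_j\}$, coprimality to $\mathcal{C}$ guarantees these orders are maximal at $\p$, and Dedekind--Kummer applies immediately. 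You do not use the $\mathcal{C}$-condition at all; instead you prove $\p\nmid 2\O_{\K}$ directly via a local ramification argument based on condition~\ref{units}, and then use $\p\nmid 2$ (together with $u\in\O_{\K}^*$) to get maximality of $\O_{\K}[\sqrt u]$ at $\p$. Your route is longer but has the virtue of showing that the $\mathcal{C}$-condition in the definition of $\P_{\K}$ is not actually needed for this lemma --- the splitting condition in $\K_{\mathfrak f}$ together with condition~\ref{units} already forces $\p\nmid 2$. The paper's route is a one-line appeal to a hypothesis that was placed there precisely for this purpose.
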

\begin{proof}
By definition of $\alpha\O_{\K}$, $(\p, \K_{\mathfrak{f}}/\K) = 1$ for all $\p \mid \alpha\O_{\K}$ .
 Now consider the field $\K(\sqrt{\varepsilon})$ for any element of the set $S$ considered in definition \ref{oneS}, say $\varepsilon$.
 Since $\K(\sqrt{\varepsilon}) \subset \K_{\mathfrak{f}}$,
 $\p$ splits in $\K_1=\K(\sqrt{\varepsilon})$. To apply the Dedekind-Kummer theorem (see page 47 of \cite{JN}) , we
 note that any prime $\p \mid \alpha\O_{\K}$  is co-prime to the conductor of $\O_{\K}[\sqrt{\varepsilon}]$
 in $\O_{\K_1}$. Therefore $X^2 -\varepsilon$ splits modulo $\p$. Therefore for any element, $\varepsilon$ of $S$, we have
$$
2 \mid  [(\O_{\K}/\p)^*: \langle \varepsilon \rangle \bmod \p].
$$
Applying the same to all the elements of $S$ and using the fact that $(\O_\K/\p)^*$ is cyclic,
we have the lemma.
\end{proof}
\begin{lem}\label{laststep}
Let $\alpha\O_{\K} \in \W$.
Suppose $\alpha\O_{\K} = ab\O_{\K}$ and $(\alpha | b\varepsilon)=1$ for some $\varepsilon \in \O_{\K}^*$, then
$a$ is a square modulo $b\O_{\K}$ and $b$ is a square modulo $a \O_{\K}$.
\end{lem}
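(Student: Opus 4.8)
The plan is to read off both divisibility statements from a single primitive integral solution of the ternary form supplied by the hypothesis, using nothing beyond the structure of $\W$ and \lemref{splitting1}. Unwinding the definition, $(\alpha\mid b\varepsilon)=1$ means that $x^{2}=\alpha y^{2}+b\varepsilon z^{2}$ has a solution $(x,y,z)\neq(0,0,0)$ in $\K^{3}$. Since $\O_{\K}$ is a principal ideal domain I may clear denominators and divide through by a generator of $\gcd(x,y,z)$, and so assume $x,y,z\in\O_{\K}$ with $\gcd(x,y,z)=\O_{\K}$. Because $\alpha\O_{\K}=ab\O_{\K}$ is squarefree, the ideals $a\O_{\K}$ and $b\O_{\K}$ are coprime and $v_{\p}(a)+v_{\p}(b)=v_{\p}(\alpha)\le 1$ for every prime $\p$.

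First I treat the claim that $a$ is a square modulo $b\O_{\K}$. Fix $\p\mid b\O_{\K}$; then $v_{\p}(b)=1$ and $v_{\p}(a)=0$. Writing $\alpha=ab$, the relation becomes $x^{2}=b\,(ay^{2}+\varepsilon z^{2})$, whence $v_{\p}(ay^{2}+\varepsilon z^{2})=2v_{\p}(x)-1$; as the left-hand side is a non-negative integer, this forces $\p\mid x$ and $\p\mid ay^{2}+\varepsilon z^{2}$. Reducing modulo $\p$ gives $ay^{2}\equiv-\varepsilon z^{2}\pmod{\p}$ in the field $\O_{\K}/\p$, in which $a$ and $\varepsilon$ are units; hence $y\equiv0$ would force $z\equiv0$ and conversely, and if both were $\equiv0$ then $\p$ would divide $x$, $y$ and $z$, contradicting primitivity. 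Thus $y,z\not\equiv0\pmod{\p}$, so $a\equiv-\varepsilon\,(zy^{-1})^{2}\pmod{\p}$. Now \lemref{splitting1} says the index of $\O_{\K}^{*}\bmod\p$ in the cyclic group $(\O_{\K}/\p)^{*}$ is even, and a subgroup of even index in a finite cyclic group is contained in the subgroup of squares; hence the units $-1$ and $\varepsilon$ are both squares modulo $\p$, and therefore so is $a$. As $\p$ was an arbitrary prime divisor of $b\O_{\K}$, $a$ is a square modulo $b\O_{\K}$.

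The symmetric statement I reduce to the one just proved by sliding the unit across the symbol. By \lemref{hprop}, $(\alpha\mid b\varepsilon)=(\alpha\mid-\alpha\cdot b\varepsilon)=(\alpha\mid-ab^{2}\varepsilon)=(\alpha\mid-a\varepsilon)$, the last step again by \lemref{hprop} (symmetry together with removal of the square $b^{2}$). Setting $\varepsilon':=-\varepsilon\in\O_{\K}^{*}$, we have $(\alpha\mid a\varepsilon')=1$ and $\alpha\O_{\K}=ba\O_{\K}$, so the argument of the previous paragraph with the roles of $a$ and $b$ interchanged and $\varepsilon'$ in place of $\varepsilon$ shows that $b$ is a square modulo $a\O_{\K}$. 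This completes the proof.

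The only points that call for any care are the passage from primitivity to ``$y,z\not\equiv0\pmod{\p}$'' --- which is what licenses dividing by $y$ and extracting the square $(zy^{-1})^{2}$ --- and the invocation of \lemref{splitting1} to absorb the unit $-\varepsilon$ into a square; the rest is routine valuation counting. The degenerate possibilities (for instance $x=0$, or $b\O_{\K}=\O_{\K}$, in which case the corresponding assertion is vacuous) are subsumed by the same computation and need no separate treatment.
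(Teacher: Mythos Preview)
Your proof is correct and follows essentially the same approach as the paper: both arguments extract an integral solution of the ternary form, run a valuation/primitivity argument at primes $\p\mid b\O_{\K}$ (resp.\ $a\O_{\K}$), and then invoke \lemref{splitting1} to absorb the stray units into squares; the only organisational difference is that the paper first uses \lemref{hprop} to pass from $(\alpha\mid b\varepsilon)$ to $(a\varepsilon_3\mid b\varepsilon)$ and reads off both conclusions from the single equation $b\varepsilon z^{2}=x^{2}-a\varepsilon_3 y^{2}$, whereas you handle one direction directly and reduce the second to the first via the symbol manipulation $(\alpha\mid b\varepsilon)=(\alpha\mid a\varepsilon')$. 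One small imprecision: from $\alpha\O_{\K}=ab\O_{\K}$ you only get $\alpha=ab\varepsilon_{1}$ for some unit $\varepsilon_{1}$, not literally $\alpha=ab$; carrying $\varepsilon_{1}$ through changes nothing (it is again absorbed by \lemref{splitting1}), but it is worth writing explicitly.
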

\begin{proof}
Since $\alpha\O_{\K} = ab\O_{\K}$, we have $ab \varepsilon_1 =\alpha $ for some $\varepsilon_1 \in \O_{\K}^*$.
From Lemma \ref{hprop}, we have
$$
1 = (\alpha| b\varepsilon) = ( ab \varepsilon_1 | b \varepsilon) = ( -ab \varepsilon_1 \cdot b \varepsilon | b \varepsilon) = (a \varepsilon_3 | b\varepsilon)
$$
where each $ \varepsilon_i \in \O_{\K}^*$.
Since we have $(a\varepsilon_3 | b\varepsilon)=1$, we have
\begin{eqnarray}\label{eqn}
b \varepsilon z^2 = x^2 - a\varepsilon_3 y^2  \text{ for some non-zero tuple } (x,y,z) \in \K^3.
\end{eqnarray}
Since the ideal $a\O_{\K}$ is square-free, $\sqrt{a\varepsilon_3} \notin \K$. Therefore $z$ is non-zero.
Further $z$ is coprime to every prime ideal dividing $a\O_{\K}$. This is because if $\p \mid {\rm gcd}(z\O_{\K}, a\O_{\K})$
then $\p^2 \mid ay^2\O_{\K}$. Therefore an odd power of $\p$ will divide $ay^2\O_{\K}$ ($a\O_{\K}$ is square free) and an even power of $\p$ will divide the other two terms (since $\gcd(a\O_{\K}, b\O_{\K}) = \O_{\K}$).
Similarly $\sqrt{b\varepsilon} \notin \K$ and $y$ is coprime to every prime ideal dividing $b\O_{\K}$.
Further, by \lemref{splitting1}
$$
2 \mid [(\O_{\K}/\p)^*:\O_{\K}^* \bmod \p], \text{ for all } \p \mid \alpha\O_{\K}
$$
and in particular for all $\p \mid a\O_{\K}$, we get that $\O_{\K}^* \bmod \p$ is in the
subgroup of quadratic residues modulo $\p$.
Now reading \eqref{eqn} modulo $a\O_{\K}$, we get
that $b$ is a square modulo $a\O_{\K}$.
The argument for $a$ modulo $b\O_{\K}$ is similar.
\end{proof}

\begin{thm}\label{algresult}
We have, for $\alpha\O_{\K} \in \W$ with $\L = \L_{\alpha} \in \mathcal{F}$,
\begin{eqnarray*}
2^{{\rm rk}_4({{\rm Cl}_{\L}})} & \ge & \frac{1}{2^{r_\K+2}} |\{ (a\O_{\K}, b\O_{\K}) : a\O_{\K}, b\O_{\K} \text{ square free }, \alpha \O_{\K} = ab\O_{\K},  \\
&& ~~
a \text{ is a square modulo } b\O_{\K} \text{ and } b \text{ is a square modulo } a \O_{\K}\}|
\end{eqnarray*}
\end{thm}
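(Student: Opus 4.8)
The plan is to combine \lemref{8} with a dictionary, furnished by the relative norm $\N_{\L/\K}$, between the ideals $\mathfrak{b}\in\mathfrak{B}_{\L}$ appearing there and the squarefree factorizations of $\alpha\O_{\K}$. Since $\mathfrak{d}_{\L/\K}=4\alpha\O_{\K}$ and $\alpha\O_{\K}$ is coprime to $2$, every $\p\mid\alpha\O_{\K}$ ramifies in $\L=\K(\sqrt{\alpha})$, say $\p\O_{\L}=\mathfrak{P}_{\p}^{2}$ with $\N_{\L/\K}(\mathfrak{P}_{\p})=\p$; hence an element $\mathfrak{b}=\prod\mathfrak{P}_{i}^{e_{i}}\in\mathfrak{B}_{\L}$ is determined by, and determines, the squarefree divisor $b\O_{\K}:=\N_{\L/\K}(\mathfrak{b})$ of $\alpha\O_{\K}$, and the complementary ideal $a\O_{\K}:=\alpha\O_{\K}\,(b\O_{\K})^{-1}$ gives a factorization $\alpha\O_{\K}=ab\O_{\K}$ into coprime squarefree ideals. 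This is a bijection between $\mathfrak{B}_{\L}$ and the set of such factorizations, and, using \propref{equiv} applied to $\mathfrak{b}=\prod_{\p\mid b}\mathfrak{P}_{\p}$ with an arbitrary generator $b$ of $b\O_{\K}$, it carries the set bounded below in \lemref{8} onto
\[
S_{2}=\{(a\O_{\K},b\O_{\K}):\ a\O_{\K},\,b\O_{\K}\ \text{squarefree},\ \alpha\O_{\K}=ab\O_{\K},\ (\alpha\mid b\varepsilon)=1\ \text{for some}\ \varepsilon\in\O_{\K}^{*}\}.
\]
It then remains to identify $S_{2}$ with the set $S_{3}$ on the right-hand side of the theorem.

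The inclusion $S_{2}\subseteq S_{3}$ is precisely \lemref{laststep}. For the reverse inclusion I would use the Hasse norm theorem (available since $\L/\K$ is cyclic of degree $2$): given $\alpha\O_{\K}=ab\O_{\K}$ with $a$ a square modulo $b\O_{\K}$ and $b$ a square modulo $a\O_{\K}$, one must find a unit $\varepsilon$ for which $b\varepsilon$ is a local norm from $\L$ at every place of $\K$. At finite $\p\nmid 2\alpha$ the extension is unramified and $b\varepsilon$ is a unit, so this is automatic. At $\p\mid a$ and at $\p\mid b$ the prime is odd and ramifies, and writing out the local norm residue symbol together with \lemref{splitting1}---which says that every element of $\O_{\K}^{*}$, in particular $-1$, is a square modulo each $\p\mid\alpha\O_{\K}$---reduces the condition to ``$b$ is a square modulo $\p$'' (for $\p\mid a$) and ``$a$ is a square modulo $\p$'' (for $\p\mid b$), both true by hypothesis and imposing nothing on $\varepsilon$. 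The archimedean places contribute no condition: the prime divisors of $\alpha\O_{\K}$ lie in $\P_{\K}$ and so split in $\K_{\mathfrak{f}}$, which contains $\K((\O_{\K}^{*})^{1/2})$ and is therefore ramified at every real place of $\K$ (through $\sqrt{-1}$), hence contains the narrow class field of $\K$; thus these primes are principal in the narrow sense, $\alpha\O_{\K}$ has a totally positive generator, and choosing $\alpha$ totally positive makes $\L/\K$ unramified at infinity. What remains are the primes above $2$, where the local norm condition is a congruence modulo a bounded power of $\p$ and genuinely constrains $\varepsilon$; meeting it simultaneously is exactly what the unit of condition \ref{units} (of order exactly $2$ modulo $\p^{2}$ at every $\p\mid 2\O_{\K}$), together with the principal ideal domain hypothesis, is tailored for.

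I expect this last point---producing one global unit that meets the $2$-adic local norm conditions at all primes above $2$ at once---to be the main obstacle, as it is where the three standing hypotheses on $\K$ really interact. The product formula for Hilbert symbols helps to organize it: once $b\varepsilon$ is a local norm at every other place, it can fail to be one at only an even number of primes above $2$, so the task is to kill a residual defect by a suitable choice of $\varepsilon$. Once the equality $S_{2}=S_{3}$ is established, the theorem follows at once by substituting it into the bound of \lemref{8}.
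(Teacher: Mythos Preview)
Your reduction via \lemref{8} and \propref{equiv} to the set $S_{2}$ matches the paper exactly. The paper then finishes in one line: ``Now by \lemref{laststep}, we have the theorem.'' But \lemref{laststep} supplies only the implication $(\alpha\mid b\varepsilon)=1\Rightarrow(a\text{ sq mod }b\O_{\K},\ b\text{ sq mod }a\O_{\K})$, i.e.\ $S_{2}\subseteq S_{3}$ and hence $|S_{2}|\le|S_{3}|$, which is the wrong direction: from $2^{\mathrm{rk}_4}\ge c\,|S_{2}|$ one cannot conclude $2^{\mathrm{rk}_4}\ge c\,|S_{3}|$ that way. Your instinct that the reverse inclusion $S_{3}\subseteq S_{2}$ is the substantive step is therefore correct, and in fact \lemref{laststep} itself is not needed for the stated inequality at all.

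Your Hasse--Minkowski route to $S_{3}\subseteq S_{2}$ is the natural repair, and the analysis at odd finite primes is right. Two points remain genuinely open in your sketch, however. First, the archimedean claim is shakier than you suggest: the generator $\alpha$ is fixed in Definition~\ref{clas of alphas defn} by condition~\eqref{cond2} and need not be totally positive (over $\Q$, for instance, every $\alpha\O_{\Q}$ is generated by a product of primes $\equiv 1\bmod 4$, which already satisfies~\eqref{cond2} only after multiplication by $-1$), so real places with $\alpha_{v}<0$ do impose sign conditions on $b\varepsilon$. Second, you yourself flag the $2$-adic step as unresolved. Together these bad places give finitely many $\F_{2}$-linear constraints on the class of $\varepsilon$ in $\O_{\K}^{*}/(\O_{\K}^{*})^{2}$, reduced by one via the product formula; what is still missing is a verification---presumably using the standing hypotheses~\ref{one}--\ref{units}---that the unit group surjects onto that quotient, and this is not automatic when there are several primes above $2$ together with real places.
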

\begin{proof}
\begin{eqnarray*}
2^{{\rm rk}_4({{\rm Cl}_{\L}})} &\ge &  \frac{1}{2^{r_\K+2}} |\{ \mathfrak{b} \in \mathfrak{B}_{\L} :  [\b]\in {\rm Cl}_{\L}, [\mathfrak{b}] = [\mathfrak{a}^2 ] \text{ for  some non-zero fractional ideal } \mathfrak{a} \text{ of } \O_{\L}  \}|\\
& = &   \frac{1}{2^{r_\K+2}} |\{ \mathfrak{b} \in \mathfrak{B}_{\L} :  \N_{\L/\K}(\b)= b\O_{\K}, (\alpha|b\varepsilon)=1 \text{ for some } \varepsilon \in \O_{\K}^* \}|
 \end{eqnarray*}
 Since there is a unique ideal $\b \in \B_{\L}$ of norm $b\O_{\K}$ for every $b\O_{\K} \mid \alpha \O_{\K}$, we get
\begin{eqnarray*}
2^{rk_4({{\rm Cl}_{\L}})}
& \ge &   \frac{1}{2^{r_\K+2}} |\{ b\O_{\K} \mid \alpha \O_{\K} : (\alpha|b\varepsilon)=1 \text{ for some } \varepsilon \in \O_{\K}^* \}|\\
& \ge &  \frac{1}{2^{r_\K+2}} |\{ (a\O_{\K}, b\O_{\K}) : \alpha \O_{\K} = ab\O_{\K}, (\alpha|b\varepsilon)=1 \text{ for some } \varepsilon \in \O_{\K}^* \}|
 \end{eqnarray*}
 Now by \lemref{laststep}, we have the theorem.
\end{proof}
\section{Algebraic preliminaries}\label{algprelims}
\begin{defn} Let $\p$ be an any prime ideal in $\O_{\K}$ and let $a \in \O_{\K}$. We define the quadratic residue symbol as
\[
\left(\frac{a}{\p}\right)=\begin{cases}
0\, \quad \textit{if}\, a\in \p,\\
1\,\quad \textit{if}\, a \not \in \p \text{ and } a\, \textit{is square } \bmod \p,\\
-1 \quad\textit{otherwise}.
\end{cases}
\]
\end{defn}
Let $\p$ be a prime ideal and $I$ be an ideal in $\O_{\K}$ such that $\p$ and $I$ are co-prime. Since $\O_{\K}$ is a PID, we can express $I$ as $I=i\O_{\K}$. For $\p \mid \alpha\O_{\K}$, we define $\Phi_{\p}(I)$ as:
\[
\Phi_{\p}(I):=\left(\frac{I}{\p}\right):=\left(\frac{i}{\p}\right)
\]
Since $2|\left[\left(\O_{\K}/\p\right)^{*}:\O^{*}_{\K}\pmod {\p}\right]$, the mapping  is well-defined. Firstly, we will define ray class group mod $\p$ of $\K$ and then we will prove that $\Phi_{\p}$ is a character on that group.
We generically denote places by the symbol $\nu$, but for non-archimedean places, we may use $\q$ to denote both a prime of $\K$ and the place corresponding to the absolute value $|.|_{\q}$. We write $\nu|\infty$ to indicate that $\nu$ is an archimedean
place, which is real or complex and $M_{\K}$ to be the set of all inequivalent places of $\K$.
\begin{defn}
Let $g:M_{\K} \rightarrow \mathbb{Z}_{\ge 0}$ be a function  with finite support such that for $\nu\in M_{\K}$ and $\nu|\infty$ we have $g(\nu) \le 1$ with $g(\nu) = 0 $ unless $\nu$ is a real place. Then any modulus $\b$ in $\K$ can be viewed as a formal product
\[
\b=\b_0\b_{\infty}, \quad \textit{with} \, \b_0=\prod_{\q\nmid\infty, \q \mid \b}\q^{g(\q)} \quad
\textit{and}\, \b_{\infty}=\prod_{\nu|\infty}\nu^{g(\nu)}.
\]
where $\b_0$ corresponds to an $\O_{\K}$ -ideal and $\b_{\infty}$ represents a subset of the real places of $\K$.
\end{defn}
Now we define the following notation in $\O_{\K}$:
\begin{enumerate}
\item $\mathcal{I_{\K}}$ be the set of all non-zero fractional ideals in $\O_{\K}$.
\item $\mathcal{I^{\b}_{\K}}\subseteq\mathcal{I_{\K}}$ is the subgroup of fractional
ideals which is prime to $\b$.
\item $\K^{\b}\subseteq \K^{*}$ is the subgroup of elements $\alpha \in \K^{*}$ for which $(\alpha) \in \mathcal{I^{\b}_{\K}}.$
\item $\K^{\b,1}\subseteq \K^{\b}$ is the subgroup of elements $\alpha \in \K^{\b}$ for which $\nu_{\q}(\alpha - 1) \ge \nu_{\q}(\b_0)$ for all primes $\q|\b_0$
and $\alpha_{\nu} > 0$ for $\nu|\b_{\infty}$ (here $\alpha_{\nu} \in \mathbb{R}$ is the image of $\alpha$ under the real-embedding $\nu$).
\item $\mathcal{P}^{\b}_{\K}\subseteq \mathcal{I^{\b}_{\K}}$  is the subgroup of principal fractional ideals $(\alpha) \in \mathcal{I^{\b}_{\K}}$ with $\alpha \in \K^{\b,1}$.
\end{enumerate}
\begin{defn}
The ray class group of $\K$ for the modulus $\b$ is the quotient
\[
H_{\b}(\K):=\mathcal{I}_{\K}^{\b}/\mathcal{P}^{\b}_{\K}.
\]
\end{defn}
Recall that for $\p \mid \alpha\O_{\K} \in \W$, $\Phi_{\p}(I)=\left(\frac{i}{\p}\right)$, if $I=i\O_{\K}$ with
${\rm gcd}(I, \p) =\O_\K$.
Since the power residue symbol is multiplicative, it immediately follows that $\Phi_{\p}$ is also multiplicative.
If $i\varepsilon \equiv 1 \bmod \p$ for some $\varepsilon \in \O_{\K}^*$ then $\Phi_{\p}(I)=1$.
Therefore, $\Phi_{\p}$ is a character on $H_{\p}(\K)$.
 Such a character is called a generalized Dirichlet character (a special instance of a Hecke character).
Let $\a$ be a square-free  ideal in $\O_{\K}$ such that $\a \mid \alpha \O_{\K} \in \W$. Then we define the symbol $\left(\frac{.}{\a}\right)$ from $ H_{\a}(\K)$ to $\{\pm 1\}$ by
\[
\left(\frac{.}{\a}\right)=\prod_{\q|\a}\left(\frac{.}{\q}\right)
\]
This map is multiplicative since the power residue symbol is multiplicative.
Further, for any ideal $I = i\O_{\K}$ with ${\rm gcd}(I, \a) = \O_{\K}$ and $i\varepsilon  \equiv 1 \bmod \a$ for some $\varepsilon \in \O_{\K}^*$, we have
\[
\left(\frac{i}{\a}\right)=\prod_{\p|\a}\left(\frac{i}{\p}\right) = 1.
\]

This makes  $\left(\frac{.}{\a}\right)$ a character on $ H_{\a}({\K})$.
We now recall a theorem of Bauer from Class field theory.
\begin{thm}{\rm (Bauer, Theorem 8.19 of \cite{Cox})}\label{Bauer}
Given two finite degree Galois extensions $\mathbf{F}_1$ and $\mathbf{F}_2$ of a number field $\K$, if
$$
\left|\{ \p \subset \O_{\K} : \p \text{ splits in } \mathbf{F}_1  \} \setminus \{ \p \subset \O_{\K} : \p \text{ splits in } \mathbf{F}_2  \} \right| < \infty,
$$
then $\mathbf{F}_2 \subset \mathbf{F}_1.$
\end{thm}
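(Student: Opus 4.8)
The plan is to deduce Theorem~\ref{Bauer} from the Chebotarev density theorem, using the elementary fact that complete splitting is ``multiplicative'' along composita. First I would discard the finitely many primes of $\O_{\K}$ that ramify in $\mathbf{F}_1$ or in $\mathbf{F}_2$; this is harmless, since the hypothesis already permits a finite exceptional set. Next, form the compositum $\mathbf{F} = \mathbf{F}_1\mathbf{F}_2$ inside a fixed algebraic closure of $\K$. Because $\mathbf{F}_1/\K$ and $\mathbf{F}_2/\K$ are finite and Galois, so is $\mathbf{F}/\K$, and the two restriction maps assemble into an injection ${\rm Gal}(\mathbf{F}/\K) \hookrightarrow {\rm Gal}(\mathbf{F}_1/\K) \times {\rm Gal}(\mathbf{F}_2/\K)$.

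The key step is then to record, for an unramified prime $\p$ of $\O_{\K}$, that the Frobenius class of $\p$ in ${\rm Gal}(\mathbf{F}/\K)$ projects, under these two restrictions, to the Frobenius classes of $\p$ in ${\rm Gal}(\mathbf{F}_1/\K)$ and in ${\rm Gal}(\mathbf{F}_2/\K)$. Since complete splitting of $\p$ in a finite Galois extension is equivalent to triviality of its Frobenius class, and the displayed map is injective, $\p$ splits completely in $\mathbf{F}$ if and only if it splits completely in both $\mathbf{F}_1$ and $\mathbf{F}_2$. Writing ${\rm Spl}(L)$ for the set of primes of $\O_{\K}$ that split completely in a number field $L$, this yields
$$
{\rm Spl}(\mathbf{F}) = {\rm Spl}(\mathbf{F}_1) \cap {\rm Spl}(\mathbf{F}_2)
$$
up to a finite set. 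Combining this with the hypothesis that ${\rm Spl}(\mathbf{F}_1) \setminus {\rm Spl}(\mathbf{F}_2)$ is finite, I would conclude that ${\rm Spl}(\mathbf{F}_1)$ and ${\rm Spl}(\mathbf{F})$ differ by only finitely many primes.

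Finally I would invoke the Chebotarev density theorem: for any finite Galois extension $L/\K$, the set ${\rm Spl}(L)$ has natural density $1/[L:\K]$, a quantity unchanged by deleting finitely many primes. Hence $1/[\mathbf{F}_1:\K] = 1/[\mathbf{F}:\K]$, so $[\mathbf{F}:\K] = [\mathbf{F}_1:\K]$; since $\mathbf{F}_1 \subseteq \mathbf{F}$, this forces $\mathbf{F} = \mathbf{F}_1$, that is, $\mathbf{F}_2 \subseteq \mathbf{F}_1$, which is the assertion.

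I do not anticipate a genuine obstacle: the argument is routine once Chebotarev is in hand, and the only points needing a little care are the reduction to unramified primes and the precise statement of the equivalence ``$\p$ splits completely in $\mathbf{F}_1\mathbf{F}_2$ $\iff$ $\p$ splits completely in each of $\mathbf{F}_1,\mathbf{F}_2$,'' which rests on the injection ${\rm Gal}(\mathbf{F}/\K) \hookrightarrow {\rm Gal}(\mathbf{F}_1/\K) \times {\rm Gal}(\mathbf{F}_2/\K)$ together with the Frobenius characterization of splitting. (One could instead quote the stronger form of Bauer's theorem, that a finite Galois extension is determined up to finite sets by its set of completely split primes, but for the inclusion claimed here the degree comparison above already suffices.)
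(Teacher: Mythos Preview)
Your argument is correct and is the standard Chebotarev-based proof of Bauer's theorem. Note, however, that the paper does not give its own proof of this statement: it is merely recalled as a known result with a citation to Cox, so there is nothing to compare your approach against.
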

We now introduce some notation. Let
$s \O_{\K}$ be an ideal of $\O_{\K}$.
If $s$ satisfies \eqref{cond2}, we set $\varepsilon_s = 1$.
If $s$ does not satisfy \eqref{cond2}, we choose $\varepsilon \in \O_{\K}^*$
such the order of $\varepsilon$ modulo each $\p^2$ for $\p \mid 2\O_{\K}$ is
$2$ (This is  condition \eqref{units} on $\K$) . We now set $\varepsilon_s =\varepsilon$.
We now consider the field $\L_{s\varepsilon_s}/\K$.
and apply the above theorem of Bauer to prove the following lemma.
\begin{lem}\label{primitivity rmk}
Consider the map
$$
\left(\frac{s\varepsilon_s}{\cdot}\right): \mathcal{I}^{s\O_{\K}}_{\K} \rightarrow \{\pm 1\},
$$
given by
$$
\bigg(\frac{s\varepsilon_s}{\partial}\bigg)= \prod_{\mathfrak{p}^r \mid\mid \partial}\bigg(\frac{s\varepsilon_s}{\mathfrak{p}}\bigg)^r, \ \text{for all} \ (\partial, s\O_{\K})=1.
$$
The map, $\big(\frac{s\varepsilon_s}{.}\big)$, defines a primitive character of $H_{\d_{\L_{s\varepsilon_s}/\K}}(\K)$.
\end{lem}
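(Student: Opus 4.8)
We plan to identify the map $\left(\frac{s\varepsilon_s}{\cdot}\right)$ with the quadratic character attached by class field theory to $\L_{s\varepsilon_s}/\K$, and then to extract its primitivity from the fact that, by \lemref{mono}, the relative discriminant of $\L_{s\varepsilon_s}/\K$ is \emph{exactly} $4s\O_{\K}$; \thmref{Bauer} is what will let us exclude a strictly smaller modulus.

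First we would pin down the modulus. By the very choice of $\varepsilon_s$ the element $s\varepsilon_s$ satisfies~\eqref{cond2}, and $s\O_{\K}$ is squarefree and coprime to $2\O_{\K}$; hence \lemref{mono} applies and gives $\O_{\L_{s\varepsilon_s}} = \O_{\K}[\sqrt{s\varepsilon_s}]$. Since $\{1,\sqrt{s\varepsilon_s}\}$ is then a relative integral basis, $\d_{\L_{s\varepsilon_s}/\K} = \dis(\{1,\sqrt{s\varepsilon_s}\})\,\O_{\K} = 4s\varepsilon_s\O_{\K} = 4s\O_{\K}$. As $\L_{s\varepsilon_s}/\K$ is abelian of degree $2$, class field theory attaches to its nontrivial character a primitive ray class character $\psi$ whose conductor has finite part equal to $\d_{\L_{s\varepsilon_s}/\K}=4s\O_{\K}$ (the conductor--discriminant relation for a quadratic extension), and $\psi(\p)$ equals $+1$ or $-1$ according as $\p$ splits or is inert in $\L_{s\varepsilon_s}$, for every prime $\p$ coprime to $2s\O_{\K}$.

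Next we would show that the symbol \emph{is} this character. For a prime $\p \nmid 2s\O_{\K}$ the order $\O_{\K}[\sqrt{s\varepsilon_s}]$ is $\p$-maximal, so the Dedekind--Kummer theorem shows that $X^2 - s\varepsilon_s$ splits modulo $\p$ precisely when $\p$ splits in $\L_{s\varepsilon_s}$; thus $\left(\frac{s\varepsilon_s}{\p}\right) = \psi(\p)$. Both maps are multiplicative on the free abelian group generated by the primes not dividing $2s\O_{\K}$, hence coincide on all ideals prime to $2s\O_{\K}$; in particular $\left(\frac{s\varepsilon_s}{\cdot}\right)$ is trivial on $\mathcal{P}^{\d_{\L_{s\varepsilon_s}/\K}}_{\K}$ and descends to a character of $H_{\d_{\L_{s\varepsilon_s}/\K}}(\K)$. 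We expect the delicate point to be exactly this last assertion: one must control the local behaviour at the primes above $2$ and at the archimedean places, i.e. verify that the conductor of $\psi$ has trivial infinite part and finite part not properly dividing $4s\O_{\K}$; this is precisely where \lemref{mono}, forcing the discriminant to be $4s\O_{\K}$ rather than a proper divisor, is indispensable.

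Finally we would prove primitivity via \thmref{Bauer}. Suppose $\left(\frac{s\varepsilon_s}{\cdot}\right)$ factored through $H_{\mathfrak{m}}(\K)$ for some modulus $\mathfrak{m}$ with $\mathfrak{m}\mid \d_{\L_{s\varepsilon_s}/\K}$ and $\mathfrak{m}\neq \d_{\L_{s\varepsilon_s}/\K}$, and let $\mathbf{F}$ be the quadratic subextension of $\K_{\mathfrak{m}}/\K$ cut out by its kernel. Then for every prime $\p$ coprime to $\d_{\L_{s\varepsilon_s}/\K}$ we have: $\p$ splits in $\mathbf{F}$ iff $\left(\frac{s\varepsilon_s}{\p}\right)=1$ iff $\p$ splits in $\L_{s\varepsilon_s}$. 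Hence the sets of split primes of $\mathbf{F}$ and of $\L_{s\varepsilon_s}$ differ by a finite set, and applying \thmref{Bauer} in both directions gives $\mathbf{F} = \L_{s\varepsilon_s}$. But then $\L_{s\varepsilon_s}\subseteq \K_{\mathfrak{m}}$, so the conductor of $\L_{s\varepsilon_s}/\K$ divides $\mathfrak{m}$; since that conductor has finite part $\d_{\L_{s\varepsilon_s}/\K}$, this forces $\mathfrak{m} = \d_{\L_{s\varepsilon_s}/\K}$, a contradiction. Therefore the character is primitive. The main obstacle, to reiterate, is not this formal last step but the precise determination of the modulus in the previous paragraph, namely the $2$-adic and archimedean local conditions; everything else is routine once $\O_{\L_{s\varepsilon_s}} = \O_{\K}[\sqrt{s\varepsilon_s}]$ is secured.
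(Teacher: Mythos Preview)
Your proposal is correct and follows essentially the same route as the paper: compute $\d_{\L_{s\varepsilon_s}/\K}=4s\O_{\K}$ via \lemref{mono}, identify the symbol with the Artin/splitting data through Dedekind--Kummer, invoke the conductor--discriminant formula for the quadratic extension, and then use \thmref{Bauer} to rule out a strictly smaller modulus. The only cosmetic difference is in the Bauer step: you pass through an auxiliary quadratic field $\mathbf{F}\subseteq \K_{\mathfrak m}$ and apply Bauer in both directions to get $\mathbf{F}=\L_{s\varepsilon_s}$, whereas the paper simply observes that every prime split in $\K_{\a}$ has symbol $1$ and hence splits in $\L_{s\varepsilon_s}$, so a single application of \thmref{Bauer} already gives $\L_{s\varepsilon_s}\subseteq \K_{\a}$ and thus $\a=\d_{\L_{s\varepsilon_s}/\K}$. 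Your worry about the archimedean part of the conductor is harmless here, since both the statement and the paper work throughout with ray class groups for the finite modulus $\d_{\L_{s\varepsilon_s}/\K}$.
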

\begin{proof}
Clearly, $\big(\frac{s\varepsilon_s}{.}\big)$ is multiplicative. The extension $\L_{s\varepsilon_s}/\K$ has relative degree 2 and relative discriminant $\d_{\L_{s\varepsilon_s}/\K}=4s\mathcal{O}_{\K}$. We may think of $\big(\frac{s\varepsilon_s}{\cdot}\big)$ as a character on $\mathcal{I}_{\K}^{\d_{\L_{s\varepsilon_s}/\K}}$
defined by
$$
\bigg(\frac{s\varepsilon_s}{\partial}\bigg)= \prod_{\mathfrak{p}^r \mid\mid \partial}\bigg(\frac{s\varepsilon_s}{\mathfrak{p}}\bigg)^r,  \text{for all} ~{\rm gcd}(\partial, \d_{\L_{s\varepsilon_s}/\K})=1.
$$
By \lemref{mono} and Dedekind–Kummer theorem, we know that for any prime ideal $\mathfrak{q}$ satisfying the condition ${\rm gcd}(\mathfrak{q}, 4s\O_{\K})=\mathcal{O}_{\K}$,
\begin{eqnarray}\label{splitting}
\bigg(\frac{s\varepsilon_s}{\mathfrak{q}}\bigg)=1 & \text{ if and only if } &   X^2 -s\varepsilon_s \text{ splits modulo } \q \nonumber\\
& \text{ if and only if } &(\mathfrak{q}, \L_{s\varepsilon_s}/\K)=1.
\end{eqnarray}
 Here $(\mathfrak{q}, \L_{s\varepsilon_s}/\K)$ denotes the Artin symbol of $\q$ with respect to the relative extension $ \L_{s\varepsilon_s}/\K$. Hence for an ideal $\partial$, ${\rm gcd}(\partial, 4s\O_{\K})=\mathcal{O}_{\K}$,
\begin{equation}\label{trivial}
\bigg(\frac{s\varepsilon_s}{\partial}\bigg)=1  \text{ if and only if } \prod_{\mathfrak{q}^r  \mid\mid \partial}(\mathfrak{q}, \L_{s\varepsilon_s}/\K)^r=1.
\end{equation}

By Conductor-discriminant formula, we have that the conductor of the extension $\L_{s\varepsilon_s}/\K$ satisfies $\mathfrak{f}_{\L_{s\varepsilon_s}/\K}= \d_{\L_{s\varepsilon_s}/\K}$. This implies $\L_{s\varepsilon_s} \subseteq \K_{\d_{\L_{s\varepsilon_s}/\K}}$, where $\K_{\d_{\L_{s\varepsilon_s}/\K}}$
is a ray class field of modulus $\d_{\L_{s\varepsilon_s}/\K}$ with respect to the number field $\K$.

By Class field theory, $  H_{\d_{\L_{s\varepsilon_s}/\K}}(\K)  \cong $ Gal$(\K_{\d_{\L_{s\varepsilon_s}/\K}}/\K)$ via the map
$$ [\a] \rightarrow \prod_{\mathfrak{q}^{r} \Vert \a} (\mathfrak{q}, \K_{\d_{\L_{s\varepsilon_s}/\K}}/\K)^{r}.$$
Now suppose we have an ideal $\partial=b\O_{\K}$ with $b \equiv 1 (\d_{\L_{s\varepsilon_s}/\K})$, then $[\partial]$ is trivial on the left. Therefore $\prod_{\mathfrak{q}^r \mid \mid \partial} (\mathfrak{q}, \K_{\d_{L_{s\varepsilon_s}/\K}}/\K)^r=1$. By properties of the Artin Symbol, we have
$$1=\prod_{\mathfrak{q}^r \mid \mid \partial} (\mathfrak{q}, \K_{\d_{\L_{s\varepsilon_s}/\K}}/\K)^r\big|_{\L_{s\varepsilon_s}}= \prod_{\mathfrak{q}^r \mid \mid \partial}(\mathfrak{q}, \L_{s\varepsilon_s}/\K)^r.$$
By \eqref{trivial} this implies that $\big(\frac{s\varepsilon_s}{\cdot}\big)$ is a character on $H_{\d_{\L_{s\varepsilon_s}/\K}}(\K)$.

We will now prove the claim about the primitivity of  $\big(\frac{s\varepsilon_s}{\cdot}\big)$ as a character on $H_{\d_{\L_{s\varepsilon_s}/\K}}(\K)$.
Suppose that $\big(\frac{s\varepsilon_s}{\cdot}\big)$ is a character on $H_{\a}(\K)$ for some $\a \mid \d_{\L_{s\varepsilon_s}/\K}$.
We have $ H_{\a}(\K) \cong$ Gal$(\K_{\a}/\K) $ via the map
$$\ [\b] \rightarrow\prod_{\mathfrak{q}^{r} \Vert \b} (\mathfrak{q}, \K_{\a}/\K)^{r}.$$
Therefore,
any prime $\q$ of $\O_\K$ which splits in $\K_{\a}/\K$ would also satisfy $[\q]=1$ in $H_{\a}(\K)$.
Since $\big(\frac{s\varepsilon_s}{\cdot}\big)$ is a character on $H_{\a}(\K)$, we would then have
$\big(\frac{s\varepsilon_s}{\q}\big)=1$.
By \eqref{splitting}, this means that if $(\q, 4s\O_{\K}) = \O_{\K}$, $\q$ will also split in $ \L_{s\varepsilon_s}/\K$.
Bauer's theorem (\thmref{Bauer}) would now imply that $ \L_{s\varepsilon_s} \subset \K_\a$, implying that $\a = \d_{\L_{s\varepsilon_s}/\K}$.
Therefore  $\big(\frac{s\varepsilon_s}{.}\big)$
 is a primitive character of $H_{\d_{\L_{s\varepsilon_s}/\K}}(\K)$.
  \end{proof}
\subsection{Analogue of quadratic reciprocity.}\label{reciprocity}

\begin{lem}\label{rayclassgp}
For an integral ideal $\a$ of $\O_{\K}$ we have
\[
H_{\a}(\K)\cong \left(\O_{\K}/\a\right)^*/\O^*_{\K} \bmod \a.
\]
Here $\O^*_{\K} \bmod \a = \{ u \bmod \a : u \in \O_{\K}^*\}$.
We shall denote this isomorphism by $\xi_{\a}$.
\end{lem}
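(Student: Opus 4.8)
The plan is to build the isomorphism $\xi_\a$ by hand, using crucially that $\O_\K$ is a principal ideal domain (condition \ref{two}). Since $\a$ is a finite integral ideal, its modulus has no archimedean part, so $H_\a(\K)=\mathcal{I}_\K^{\a}/\mathcal{P}_\K^{\a}$ with $\mathcal{P}_\K^{\a}$ the group of principal ideals $(\gamma)$, $\gamma\in\K^{\a,1}$, i.e.\ $\nu_\q(\gamma-1)\ge\nu_\q(\a)$ for all $\q\mid\a$.

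First I would define a homomorphism $\psi\colon\mathcal{I}_\K^{\a}\to(\O_\K/\a)^*/(\O_\K^*\bmod\a)$. Given $I\in\mathcal{I}_\K^{\a}$, write $I=\gamma\O_\K$ with $\gamma\in\K^{\a}$, which is possible because $\O_\K$ is a PID. For each prime $\q$ with $\q^{e}\Vert\a$, the condition $\gamma\in\K^{\a}$ gives $\nu_\q(\gamma)=0$, so $\gamma$ is a unit of the local ring $\O_{\K,\q}$ and hence has a well-defined image in $(\O_{\K,\q}/\q^{e}\O_{\K,\q})^*\cong(\O_\K/\q^{e})^*$; via the Chinese Remainder Theorem $(\O_\K/\a)^*\cong\prod_{\q^{e}\Vert\a}(\O_\K/\q^{e})^*$ this produces a class $\overline\gamma\in(\O_\K/\a)^*$. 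Any two generators of $I$ differ by a unit of $\O_\K^*$, so the class of $\overline\gamma$ modulo $\O_\K^*\bmod\a$ depends only on $I$; set $\psi(I)$ to be this class. Multiplicativity is immediate from $\gamma_1\gamma_2\O_\K=\gamma_1\O_\K\cdot\gamma_2\O_\K$.

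Next I would check surjectivity and compute the kernel. Any class in $(\O_\K/\a)^*$ is represented by some $\beta\in\O_\K$, and such a $\beta$ is automatically coprime to $\a$ (being a unit modulo each $\q^{e}\Vert\a$ it lies in no $\q\mid\a$), so $\beta\O_\K\in\mathcal{I}_\K^{\a}$ with $\psi(\beta\O_\K)$ the prescribed class; hence $\psi$ is onto. For the kernel: if $\gamma\O_\K\in\mathcal{P}_\K^{\a}$ one may take $\gamma\in\K^{\a,1}$, so $\overline\gamma=1$ and $\psi(\gamma\O_\K)=1$; conversely, if $\psi(\gamma\O_\K)=1$ there is $u\in\O_\K^*$ with $\overline\gamma=\overline u$, i.e.\ $\nu_\q(\gamma u^{-1}-1)\ge\nu_\q(\a)$ for all $\q\mid\a$, so $\gamma u^{-1}\in\K^{\a,1}$ and $\gamma\O_\K=(\gamma u^{-1})\O_\K\in\mathcal{P}_\K^{\a}$. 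Thus $\ker\psi=\mathcal{P}_\K^{\a}$, and $\psi$ descends to an isomorphism $\xi_\a\colon H_\a(\K)=\mathcal{I}_\K^{\a}/\mathcal{P}_\K^{\a}\xrightarrow{\ \sim\ }(\O_\K/\a)^*/(\O_\K^*\bmod\a)$. (Equivalently, this is the special case ${\rm Cl}_\K=1$ of the standard exact sequence $\O_\K^*\to(\O_\K/\a)^*\to H_\a(\K)\to{\rm Cl}_\K\to 1$.)

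The only genuinely delicate point is the well-definedness of $\psi$: one must make sense of ``$\gamma\bmod\a$'' for $\gamma\in\K^*$ that need not lie in $\O_\K$, which is handled by passing to the localizations (completions) $\O_{\K,\q}$ at the finitely many primes $\q\mid\a$ and reassembling via CRT. Everything else is routine bookkeeping, and the PID hypothesis enters exactly twice: to produce a generator $\gamma$ of each $I\in\mathcal{I}_\K^{\a}$, and to ensure two such generators differ only by a unit.
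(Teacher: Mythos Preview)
Your proof is correct and follows essentially the same approach as the paper: define the map $\mathcal{I}_\K^{\a}\to(\O_\K/\a)^*/(\O_\K^*\bmod\a)$ by sending an ideal to the residue class of any generator, observe surjectivity, and identify the kernel with $\mathcal{P}_\K^{\a}$. The only difference is that you are more explicit than the paper about making sense of $\overline{\gamma}$ when the generator $\gamma$ need not lie in $\O_\K$, handling this via localization and CRT; the paper's argument simply writes $\bar b$ and leaves this point implicit.
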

\begin{proof}
Since $\O_{\K}$ is a PID, we may define
\begin{eqnarray*}
\xi :  \mathcal{I}_{\K}^{\a} & \rightarrow & \left(\O_{\K}/\a\right)^*/\O^*_{\K}\bmod \a\\
\b &\rightarrow& \bar{b},~~~~~~~~ \text{ where } \b=b\O_{\K}.
\end{eqnarray*}
Surjectivity of $\xi$ is obvious. We now consider injectivity.
To do so, we note
\begin{align*}
\ker \Psi = \{\b\in \mathcal{I}_{\K}^{\a}: \bar{b\varepsilon}\equiv 1 \bmod \a\, \textit{for some }\varepsilon\in \O^*_{\K}\}=\{\b\in \mathcal{I}_{\K}^{\a}:\b \, \textit{has a generator which is } 1 \bmod \,\a\}.
\end{align*}
Hence, $\mathcal{I}_{\K}^{\a}/{\mathcal{P}_{\K}^{\a}}\cong \left(\O_{\K}/\a\right)^*/\O^*_{\K}{(\a)}.$
\end{proof}

Class field theory tells us that
$$
\mathcal{I}_{\K}^{\a}/{\mathcal{P}_{\K}^{\a}} \cong \gal\left(\K_{\a}/\K\right)
$$
via the Artin map, denoted $\Psi_{\a}$.
From \lemref{rayclassgp}, we now have the following corollary.
\begin{cor}\label{isom}
For an integral ideal $\a$ of $\O_{\K}$ we have
\[
\gal\left(\K_{\a}/\K\right)\cong \left(\O_{\K}/\a\right)^*/\O^*_{\K} \bmod \a
\]
under the map $\xi_{\a} \circ \Psi_{\a}^{-1}$.
\end{cor}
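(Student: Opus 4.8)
The plan is to obtain the isomorphism simply by splicing together the two identifications that have just been established. On the one hand, \lemref{rayclassgp} gives a group isomorphism
$$
\xi_{\a} : \mathcal{I}_{\K}^{\a}/\mathcal{P}_{\K}^{\a} \;\xrightarrow{\ \sim\ }\; \left(\O_{\K}/\a\right)^*/\O^*_{\K} \bmod \a ,
$$
and the group on the left is, by definition, the ray class group $H_{\a}(\K)$. On the other hand, class field theory provides the Artin isomorphism
$$
\Psi_{\a} : \mathcal{I}_{\K}^{\a}/\mathcal{P}_{\K}^{\a} \;\xrightarrow{\ \sim\ }\; \gal\left(\K_{\a}/\K\right) .
$$
Inverting the second map and composing with the first therefore yields an isomorphism
$$
\xi_{\a} \circ \Psi_{\a}^{-1} : \gal\left(\K_{\a}/\K\right) \;\xrightarrow{\ \sim\ }\; \left(\O_{\K}/\a\right)^*/\O^*_{\K} \bmod \a ,
$$
which is exactly the assertion of the corollary, so the proof is essentially a one-line composition of isomorphisms.

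For completeness I would briefly trace what this composite does on Frobenius elements: if $\q$ is a prime of $\O_{\K}$ coprime to $\a$, then $\Psi_{\a}^{-1}$ sends its Frobenius $(\q,\K_{\a}/\K)$ to the class $[\q]\in\mathcal{I}_{\K}^{\a}/\mathcal{P}_{\K}^{\a}$, and $\xi_{\a}$ sends $[\q]$ to the class of a generator of $\q$ in $\left(\O_{\K}/\a\right)^*/\O^*_{\K}\bmod\a$; extending multiplicatively over all ideals prime to $\a$ shows the composite is well defined independently of the chosen generators, which is precisely the content of \lemref{rayclassgp}. There is no genuine obstacle here: the only place the PID hypothesis on $\O_{\K}$ intervenes is in the construction of $\xi_{\a}$ inside \lemref{rayclassgp}, and that has already been handled; the corollary itself is a purely formal consequence of that lemma together with the Artin reciprocity isomorphism.
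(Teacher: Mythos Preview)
Your proposal is correct and matches the paper's approach exactly: the corollary is obtained by composing the isomorphism $\xi_{\a}$ of \lemref{rayclassgp} with the inverse of the Artin isomorphism $\Psi_{\a}$ from class field theory. The additional paragraph tracing the composite on Frobenius elements is a helpful elaboration but is not needed for the argument.
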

Let $\p$ and $\q$  be two prime ideals in $\W$.
 Since $\O_{\K}$ is a PID, we have $\p=(p)$ and $\q=(q)$ for elements $p,q \in \O_{\K}$.
Without loss of generality, by the definition of $\W$, we may assume
$$q\equiv 1\bmod 4\O_{\K}.$$
Now,
\begin{eqnarray*}
\left(\dfrac{\q}{\p}\right)=\left(\frac{q}{\p} \right) =1 \text{ if and only if } x^{2} - q \text{ splits modulo } \p.
\end{eqnarray*}

\vspace{1mm}
\noindent
Let $\L^\prime_{\q}=\K(\sqrt{q})$, where $\dfrac{1+\sqrt{q}}{2}\in
\O_{\L^\prime_\q}.$ Then $\left\{1,\dfrac{1+\sqrt{q}}{2}\right\}$ is an basis of
$\L_{\q}^\prime$ over $\K$. Therefore $\d_{\L_{\q}^{\prime}/\K} \mid \q$. However
since $\q$ is prime and it ramifies in $\L_{\q}^\prime$, we have $\d_{\L_{\q}^{\prime}/\K} = \q$.
We claim now that $\O_{\L_{\q}^\prime} = \O_{\K}\left[\dfrac{1+\sqrt{q}}{2}\right]$.
\begin{lem}\label{monogenic}
Let $\K_1$ be any quadratic extension of $\K$, and $\O_{\K}$ be PID, then $\O_{\K_1}=\O_{\K}[\alpha]$ for some $\alpha\in \K_1$.
\end{lem}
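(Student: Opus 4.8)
The plan is to invoke the structure theory of finitely generated modules over the principal ideal domain $\O_{\K}$. First I would record that $\O_{\K_1}$ is a finitely generated $\O_{\K}$-module: it is finitely generated over $\Z$ (Noether), and $\Z \subseteq \O_{\K}$. It is also torsion-free as an $\O_{\K}$-module, being a subring of the field $\K_1$. Since $\O_{\K}$ is a PID, $\O_{\K_1}$ is therefore a free $\O_{\K}$-module; extending scalars to $\K$ identifies its rank with $[\K_1:\K]=2$.

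Next I would show that the element $1$ may be chosen as part of an $\O_{\K}$-basis of $\O_{\K_1}$. It suffices to prove that the quotient $\O_{\K_1}/\O_{\K}\cdot 1$ is torsion-free, for then it is a finitely generated torsion-free module over a PID, hence free (of rank $1$), and the short exact sequence $0 \to \O_{\K}\cdot 1 \to \O_{\K_1} \to \O_{\K_1}/\O_{\K}\cdot 1 \to 0$ splits. To check torsion-freeness, suppose $x \in \O_{\K_1}$ and $cx \in \O_{\K}$ for some nonzero $c \in \O_{\K}$; then $x = (cx)/c \in \K$, so $x \in \K \cap \O_{\K_1} = \O_{\K}$ by integral closedness of $\O_{\K}$ in $\K$, i.e. the image of $x$ is zero. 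Hence we may write $\O_{\K_1} = \O_{\K}\cdot 1 \oplus \O_{\K}\cdot \alpha$ for some $\alpha \in \O_{\K_1}$.

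Finally, since $\{1,\alpha\}$ is $\O_{\K}$-linearly independent, $\alpha \notin \O_{\K}$, so $\alpha \notin \K$ (again because $\K \cap \O_{\K_1} = \O_{\K}$), whence the minimal polynomial of $\alpha$ over $\K$ has degree $2$; as $\alpha$ is integral over $\O_{\K}$ and $\O_{\K}$ is integrally closed, this polynomial is monic with coefficients in $\O_{\K}$. Therefore $\alpha^2 \in \O_{\K} + \O_{\K}\alpha$, so $\O_{\K}[\alpha] = \O_{\K} + \O_{\K}\alpha = \O_{\K_1}$, as desired. The only step requiring a modicum of care is the splitting in the second paragraph, namely that $\O_{\K}\cdot 1$ is a direct summand; but this is immediate once the quotient is seen to be torsion-free, and the remainder of the argument is routine.
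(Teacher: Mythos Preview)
Your proof is correct. The paper itself states this lemma without proof, so there is nothing to compare against; your argument via the structure theorem for finitely generated modules over a PID, together with the observation that $\O_{\K_1}/\O_{\K}$ is torsion-free (hence free, so the extension splits and $1$ completes to a basis), is the standard and cleanest route.
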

By \lemref{monogenic}, we have $\O_{\L_{\q}^\prime} = \O_{\K}\left[\theta\right]$ for
some $\theta \in \O_{\L_{\q}^\prime}$. Since  $\dfrac{1+\sqrt{q}}{2}\in
\O_{\L^\prime_\q}$ and $\O_{\L^\prime_\q}$ is a free $\O_{\K}$ module with basis $\{1, \theta \}$, we have a matrix $M$ with entries in $\O_{\K}$
such that
$$
M \begin{pmatrix} 1 & 1 \\ \theta & \sigma(\theta) \end{pmatrix} =  \begin{pmatrix} 1 & 1 \\  \frac{1 +\sqrt{q}}{2} & \frac{1 -\sqrt{q}}{2} \end{pmatrix}.
$$
Since
$$
 {\rm det}(M)^2 {\rm disc}\{1 ,\theta\} \O_{\K} =  {\rm disc}\left\{1 ,  \frac{1 +\sqrt{q}}{2}\right\} \O_{\K} = \q =\d_{\L^{\prime}_{\q}/\K}
$$
we have that $ {\rm det}(M)^2 \in \O_{\K}^*$. But $ {\rm det}(M) \in \O_{\K}$ and therefore $ {\rm det}(M) \in \O_{\K}^*$.
This implies that $M$ is invertible and therefore we have our claim.
Now we have the following diagram
\[
\begin{tikzcd}
\K_{\q} \arrow[dash]{d}\arrow[bend right=47, dash]{d}{G_1} \arrow[dash]{d}\arrow[bend left=47, dash]{dd}{G}\\
\L^\prime_{\q} \arrow[dash]{d}{2}\\
\K
\end{tikzcd}
\]
Observe that, $G \cong \left(\O_{\K}/\q\right)^{*}/\O_{\K}^{*} \bmod \q$ (under the map $\xi_{\q} \circ \Psi_{\q}^{-1}$)  is cyclic.
Hence
\begin{eqnarray*}
\left(\dfrac{\q}{\p}\right)=1 &
\text{  if and only if } & x^2-q \text{ splits in modulo } \p, \\
& \text{  if and only if } & (2x-1)^2-q \text{ splits in modulo } \p ~(\p \text{ does not lie above } 2\Z), \\
& \text{  if and only if }&  \p \text{ splits in } \L_{\q}^{\prime} \text{ (by Dedekind-Kummer Theorem). }
\end{eqnarray*}
Now by the properties of the Artin Symbol, we have
\begin{eqnarray*}
\p \text{ splits in } \L_{\q}^{\prime}
&\text{  if and only if } &
(\p, \L_{\q}^\prime/\K) = 1\\
&\text{  if and only if } &
(\p, \K_{\q}/\K) \in G_1
\end{eqnarray*}
Let us now consider $\xi_{\q} \circ \Psi_{\q}^{-1} ((\p, \K_{\q}/\K)).$ By the definition of
the Artin map $\Psi_{\q}$, we have
$$
\xi_{\q} \circ \Psi_{\q}^{-1} ((\p, \K_{\q}/\K)) = \xi_{\q}([\p]).
$$
If the ideal $\p = p\O_{\K}$, we have $\xi_{\q}([\p]) = \bar{p} \in  \left(\O_{\K}/\q\right)^{*}/\O_{\K}^{*} \bmod \q$.
\begin{eqnarray*}
\p \text{ splits in } \L_{\q}^{\prime}
&\text{  if and only if } &
(\p, \K_{\q}/\K) \in G_1,\\
&\text{  if and only if } &
\bar{p} \text{ is in the unique subgroup of index } 2 \text{ in }  \left(\O_{\K}/\q\right)^{*}/\O_{\K}^{*} \bmod \q.
\end{eqnarray*}
The last observation follows from \corref{isom}.
We have a natural surjective homomorphism given by
\begin{eqnarray*}
\pi:  \left(\O_{\K}/\q\right)^{*} & \to &  \left(\O_{\K}/\q\right)^{*}/\O_{\K}^{*} \bmod \q\\
a \bmod \q & \to & \bar{a}.
\end{eqnarray*}
For $\p \in \W$, by \lemref{splitting1} we know that $ \O_{\K}^{*} \bmod \q$ is contained in the
subgroup of quadratic residues in  $\left(\O_{\K}/\q\right)^{*}$. Let us denote the subgroup
of quadratic residues in $ \left(\O_{\K}/\q\right)^{*}$ by $R_{\q}$.
Then we observe that $\pi(R_{\q})$ has index $2$ in  $\left(\O_{\K}/\q\right)^{*}$.
This is because
$$
\frac{ \left(\O_{\K}/\q\right)^{*}/\O_{\K}^{*} \bmod \q }{R_{\q} /\O_{\K}^{*} \bmod \q} \cong  \left(\O_{\K}/\q\right)^{*}/R_{\q}.
$$
Therefore, the unique subgroup of index $2$ in $\left(\O_{\K}/\q\right)^{*}/\O_{\K}^{*} \bmod \q$ is $\pi(R_{\q})$.
We may now conclude that
\begin{eqnarray*}
\p \text{ splits in } \L_{\q}^{\prime}
&\text{  if and only if } &
\bar{p} \text{ is in the unique subgroup of index } 2 \text{ in }  \left(\O_{\K}/\q\right)^{*}/\O_{\K}^{*} \bmod \q,\\
&\text{  if and only if } &
p \text{ is a quadratic residue modulo } \q,\\
&\text{  if and only if } & x^2-p \text{ splits in modulo } \q,\\
 &\text{  if and only if } & \left(\dfrac{\p}{\q}\right)=1.
\end{eqnarray*}

\vspace{1mm}
\noindent
Therefore, for primes $\p$ and $\q$ in $\W$, we have
$\left(\dfrac{\q}{\p}\right)=\left(\dfrac{\p}{\q}\right)$.

\vspace{1mm}
\noindent
By multiplicativity, we obtain the following lemma.
\begin{lem}\label{quadreciprocity}
Let $\partial_{\bar{v}}$ and $\partial_{\bar{u}}$ be two ideals in $\W$.Then
$\left(\dfrac{\partial_{\bar{v}}}{\partial_{\bar{u}}}\right)=\left(\dfrac{\partial_{\bar{u}}}{\partial_{\bar{v}}}\right)$.
\end{lem}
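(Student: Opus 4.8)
The plan is to deduce the general reciprocity statement from the prime‐to‐prime case $\left(\frac{\q}{\p}\right)=\left(\frac{\p}{\q}\right)$ just established for primes in $\W$, using nothing more than the multiplicativity of the symbol $\left(\frac{\cdot}{\cdot}\right)$ in each of its two arguments, together with a one‐line treatment of the case in which the two ideals are not coprime. Throughout I use that for $\p\mid\partial\O_{\K}$ with $\partial\O_{\K}\in\W$ the value $\left(\frac{\cdot}{\p}\right)$ is insensitive to the choice of generator, which is exactly \lemref{splitting1} (it forces $\O_{\K}^*\bmod\p$ to consist of quadratic residues), so writing $\partial_{\bar v}=v\O_{\K}$ and $\partial_{\bar u}=u\O_{\K}$ and computing with the generators $v$ and $u$ is legitimate.

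First I would dispose of the degenerate case in which $\partial_{\bar v}$ and $\partial_{\bar u}$ share a prime factor $\p$. Then $v\in\p$, so $\left(\frac{v}{\p}\right)=0$, whence $\left(\frac{\partial_{\bar v}}{\partial_{\bar u}}\right)=\prod_{\q\mid\partial_{\bar u}}\left(\frac{v}{\q}\right)=0$; by symmetry $\left(\frac{\partial_{\bar u}}{\partial_{\bar v}}\right)=0$ as well, and both sides vanish. Hence I may assume $\gcd(\partial_{\bar v},\partial_{\bar u})=\O_{\K}$.

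Now, since $\partial_{\bar v},\partial_{\bar u}\in\W$ are squarefree with all prime factors in $\P_{\K}$, write $\partial_{\bar v}=\p_1\cdots\p_r$ and $\partial_{\bar u}=\q_1\cdots\q_s$ with the $\p_i,\q_j$ pairwise distinct primes of $\W$, and fix generators $\p_i=p_i\O_{\K}$, $\q_j=q_j\O_{\K}$, so that $v=\prod_i p_i$ and $u=\prod_j q_j$ generate $\partial_{\bar v}$ and $\partial_{\bar u}$. Using the definition $\left(\frac{\cdot}{\a}\right)=\prod_{\q\mid\a}\left(\frac{\cdot}{\q}\right)$ in the lower slot and the multiplicativity of each $\left(\frac{\cdot}{\q}\right)$ on $(\O_{\K}/\q)^*$ in the upper slot, I would expand
$$
\left(\frac{\partial_{\bar v}}{\partial_{\bar u}}\right)=\prod_{j=1}^{s}\left(\frac{v}{\q_j}\right)=\prod_{j=1}^{s}\prod_{i=1}^{r}\left(\frac{p_i}{\q_j}\right)=\prod_{i,j}\left(\frac{\p_i}{\q_j}\right),
$$
and symmetrically $\left(\frac{\partial_{\bar u}}{\partial_{\bar v}}\right)=\prod_{i,j}\left(\frac{\q_j}{\p_i}\right)$. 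Applying the prime reciprocity $\left(\frac{\p_i}{\q_j}\right)=\left(\frac{\q_j}{\p_i}\right)$, which is valid because each $\p_i$ and each $\q_j$ is a prime of $\W$, factor by factor identifies the two products, and the lemma follows.

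As for obstacles, there really is none of substance remaining: all the genuine work is in the prime case, which rests on \lemref{splitting1}, \corref{isom}, and the Dedekind–Kummer theorem. The only points needing any care are the well‑definedness of the symbol under a change of generator — handled by \lemref{splitting1} — and the bookkeeping in interchanging the double product, and neither is a real difficulty.
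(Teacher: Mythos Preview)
Your proof is correct and follows essentially the same route as the paper: expand both symbols as double products over the prime factors via bilinearity and apply the prime-to-prime reciprocity $\left(\frac{\p}{\q}\right)=\left(\frac{\q}{\p}\right)$ established just before the lemma. Your explicit treatment of the non-coprime case and the remark on well-definedness under change of generator are careful additions but not departures from the paper's argument.
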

\section{Analytic preliminaries}\label{anaprelims}
\subsection{Divisor function and some variants}\label{anaprelims1}
We begin with an upper bound on the number of  squarefree integral ideals with norm atmost $x$ and a prescribed number of prime divisors.
\begin{lem}\label{l prime fact est}
There exists a constant $B_{0}=B_{0}(\K)$ such that for every $X \geq 3$ and $\ell \geq 1$ we have
$$\# \{ \mathfrak{a} \subseteq \O_{\K} : \mathfrak{N}(\mathfrak{a}) \leq X, \omega_{\K}(\mathfrak{a})= \ell, \mu^{2}(\mathfrak{a})=1 \} \ll_{\K} \frac{X}{\log X} \frac{(\log\log X+ B_{0})^{\ell-1}}{(\ell-1)!}.$$
Here $\N$ denotes the norm map from $\K$ to $\Q$.
\end{lem}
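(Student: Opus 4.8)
The plan is to prove, by induction on $\ell$, the following two‑parameter refinement, which contains the lemma as the case $Y=2$ (the bounded quantity $\log\log 2$ being absorbed into the constant). Set
$$
\pi_\ell(X,Y):=\#\{\mathfrak a\subseteq\O_\K:\N\mathfrak a\le X,\ \omega_\K(\mathfrak a)=\ell,\ \mu^2(\mathfrak a)=1,\ \N\mathfrak p>Y\text{ for every }\mathfrak p\mid\mathfrak a\}.
$$
The claim is that there are $c_\K,B_0>0$, depending only on $\K$, with $\pi_\ell(X,Y)\le c_\K\,\tfrac{X}{\log X}\,\tfrac{(\log\log X-\log\log Y+B_0)^{\ell-1}}{(\ell-1)!}$ for all $X\ge 3$, $2\le Y$, $\ell\ge 1$ (the statement being vacuous, both sides zero, unless $Y^{\ell}<X$). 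The only analytic input is the Prime Ideal Theorem for $\zeta_\K$ in the form $\#\{\mathfrak p:\N\mathfrak p\le t\}=\li(t)+O_\K\bigl(t\,e^{-c_\K\sqrt{\log t}}\bigr)$, together with its elementary corollary $\sum_{\N\mathfrak p\le t}\tfrac1{\N\mathfrak p}=\log\log t+M_\K+O_\K(1/\log t)$; I also use the trivial bound $\pi_\ell(X,Y)\ll_\K X$ and, for $\ell$ large, the Rankin estimate $\pi_\ell(X)\le X^{s}\bigl(\sum_{\mathfrak p}\N\mathfrak p^{-s}\bigr)^{\ell}/\ell!$ for $s>1$ (valid since $\O_\K$ is a Dedekind domain).

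For $\ell=1$ the Prime Ideal Theorem gives $\pi_1(X,Y)=\#\{\mathfrak p:Y<\N\mathfrak p\le X\}\le c_\K X/\log X$, the required bound. For $\ell\ge 2$, removing from a squarefree ideal counted by $\pi_\ell(X,Y)$ its prime factor of least norm (ties broken by a fixed ordering of primes) gives the clean recursion, with no combinatorial factor,
$$
\pi_\ell(X,Y)=\sum_{Y<\N\mathfrak p\le X^{1/\ell}}\pi_{\ell-1}\bigl(X/\N\mathfrak p,\ \N\mathfrak p\bigr),
$$
the range $\N\mathfrak p\le X^{1/\ell}$ being forced because the remaining $\ell-1$ prime factors all exceed $\N\mathfrak p$, so their product $X/\N\mathfrak p$ is at least $\N\mathfrak p^{\,\ell-1}$. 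I then insert the inductive hypothesis into each summand and replace the sum over $\mathfrak p$ by an integral through the Prime Ideal Theorem; the sum/integral discrepancy is controlled by its error term and accumulates harmlessly because peeling off the least prime first keeps the relevant argument $\ge X^{1-o(1)}$ at every step, provided we restrict for the moment to $\ell\le\sqrt{\log X}$. Writing $\N\mathfrak p=X^{u}$, and using that $\log\log(X/\N\mathfrak p)-\log\log\N\mathfrak p=\log\tfrac{1-u}{u}$ while the measure $\tfrac{X/\N\mathfrak p}{\log(X/\N\mathfrak p)}\,d\pi(\N\mathfrak p)$ becomes $\tfrac{X}{\log X}\cdot\tfrac{du}{u(1-u)}$, one arrives at
$$
\pi_\ell(X,Y)\ \le\ \frac{c_\K X}{\log X\,(\ell-2)!}\int_{(\log Y)/\log X}^{1/\ell}\frac{\bigl(\log\tfrac{1-u}{u}+B_0\bigr)^{\ell-2}}{u(1-u)}\,du\ +\ (\text{small error}).
$$
Now the substitution $w=\log\tfrac{1-u}{u}$, under which $\tfrac{du}{u(1-u)}=-\,dw$, turns the integral into $\int_{\log(\ell-1)}^{\,w_0}(w+B_0)^{\ell-2}\,dw$ where $w_0=\log\tfrac{\log X-\log Y}{\log Y}<\log\log X-\log\log Y$ (strictly, since $Y>1$); hence it is at most $\int_{\log(\ell-1)}^{\log\log X-\log\log Y}(w+B_0)^{\ell-2}\,dw\le\tfrac{(\log\log X-\log\log Y+B_0)^{\ell-1}}{\ell-1}$, and the strictness of the inequality $w_0<\log\log X-\log\log Y$ provides exactly the slack needed to swallow the Prime‑Ideal‑Theorem error. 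Substituting back yields $\pi_\ell(X,Y)\le c_\K\,\tfrac{X}{\log X}\,\tfrac{(\log\log X-\log\log Y+B_0)^{\ell-1}}{(\ell-1)!}$, closing the induction for $\ell\le\sqrt{\log X}$.

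The remaining points are routine. For $\ell>\sqrt{\log X}$ (in particular for $\ell>\log_2 X$, where $\pi_\ell(X)=0$) the Rankin bound with the near‑optimal choice $s=1+\ell/\log X$ already beats the target, since $X^{s}=O(X)$ while $\bigl(\sum_{\mathfrak p}\N\mathfrak p^{-s}\bigr)^{\ell}/\ell!=(\log\log X-\log\ell+O_\K(1))^{\ell}/\ell!$ is super‑exponentially smaller than $(\log\log X+B_0)^{\ell-1}/((\ell-1)!\log X)$ in that range; the two ranges of $\ell$ overlap, so every $\ell\ge 1$ is covered. Small $X$ is handled by the trivial bound $\pi_\ell(X,Y)\ll_\K X$ together with a sufficiently large choice of $c_\K$ and $B_0$. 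The main obstacle — and the crux of the argument — is the inductive step: one must peel off the prime of \emph{least} norm (so that the lower bound $\N\mathfrak p$ for the remaining primes enters and the recursion has a genuine two‑parameter structure), and one must \emph{not} discard the factor $1/\log(X/\N\mathfrak p)$ before integrating, because it is precisely the pairing of $1/\log(X/\N\mathfrak p)$ with $1/\N\mathfrak p$ that turns $du/(u(1-u))$ into $-\,dw$ and makes the integral telescope exactly into the next power. This exact, uniform‑in‑$\ell$ bookkeeping is what distinguishes the Hardy–Ramanujan‑type inequality from Landau's asymptotic $\pi_\ell(X)\sim\tfrac{X}{\log X}\tfrac{(\log\log X)^{\ell-1}}{(\ell-1)!}$, which holds only for $\ell$ fixed.
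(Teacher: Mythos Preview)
Your approach differs substantially from the paper's. The paper runs a much simpler one--parameter induction: writing $M_\ell(X)$ for the count, it observes that in a squarefree ideal with $\ell+1$ prime factors of total norm $\le X$, at least $\ell$ of the primes satisfy $\N\p\le\sqrt X$; removing any one of these yields
\[
\ell\,M_{\ell+1}(X)\ \le\ \sum_{\N(\p^2)\le X} M_\ell\bigl(X/\N\p\bigr),
\]
after which the induction hypothesis, the crude bound $\log(X/\N\p)\ge\tfrac12\log X$, and Mertens' estimate $\sum_{\N\p\le\sqrt X}1/\N\p\le\log\log X+B_0$ finish the step. There is no two--parameter refinement, no integral, no change of variables, and no separate treatment of large $\ell$. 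Your scheme---tracking a lower cutoff $Y$ and peeling off the prime of \emph{least} norm, then integrating exactly via the substitution $w=\log\frac{1-u}{u}$---is the classical Hardy--Ramanujan device; done carefully it gives a slightly sharper handle on uniformity in $\ell$ than the paper's cruder step, at the cost of considerably more bookkeeping.

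That said, your write--up has genuine gaps. The passage from the prime sum to the integral is waved through (``small error'', ``accumulates harmlessly''); this error depends on $\ell$ through the exponent $\ell-2$, and you never bound it. More seriously, the Rankin patch for $\ell>\sqrt{\log X}$ fails as stated: with $s=1+\ell/\log X$ one has $X^s=Xe^\ell$ and $\bigl(\sum_\p\N\p^{-s}\bigr)^\ell\asymp(\log\log X-\log\ell)^\ell$, and at $\ell=\sqrt{\log X}$ (so $\log\ell=\tfrac12\log\log X$) the Rankin bound exceeds the target by a factor of order $(e/2)^{\sqrt{\log X}}\sqrt{\log X}\log\log X\to\infty$, so the two ranges do not overlap. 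The case--split is in fact unnecessary: if you keep the two--parameter induction but bound the prime sum by partial summation against the Mertens estimate $\sum_{\N\p\le t}1/\N\p=\log\log t+M_\K+O(1)$ (rather than converting to an integral via the PIT), the step closes for all $\ell$ simultaneously with an absolute constant, and no Rankin input is needed.
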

\begin{proof}
We prove this by induction. For $\ell=1$, the required inequality holds by prime ideal theorem.
We now assume the result for $\ell$ and prove for $\ell+1$. Let
$$
M_{\ell} (X)=\{ \mathfrak{a} \subseteq \O_{\K} : \mathfrak{N}(\mathfrak{a}) \leq X,  \omega_{\K}(\mathfrak{a})= \ell, \mu^{2}(\mathfrak{a})=1 \}.
$$
Let us consider an element $\a_1 = \p_1\p_2\cdots\p_{\ell+1} \in M_{\ell+1}(X)$ such that
$$\N(\p_1) < \N(\p_2) <\cdots< \N(\p_{\ell+1}).$$
Since $\N(\a_1) \le X$, $\N(\p_i) \le \sqrt{X}$ for all $1 \le i \le \ell$.
In other words $\N(\p_i^2) \le X$ for all $1 \le i \le \ell$. We also have
$$
\p_1\p_2\cdots\p_{i-1} \p_{i+1} \cdots \p_{\ell+1} \in M_{\ell}(X/\mathfrak{N}(\mathfrak{p}_i))~\text{ for all }~1 \le i \le \ell.
$$
This implies that
$$ \ell M_{\ell+1} (X) \le \sum_{\mathfrak{N}(\mathfrak{p}^{2}) \leq X}M_{\ell}(X/\mathfrak{N}(\mathfrak{p})).$$
Applying the induction hypothesis, we now have
\begin{align*}
\ell M_{\ell+1}(X)
&\ll \sum_{\mathfrak{N}(\mathfrak{p}^{2}) \leq X} \frac{X}{\mathfrak{N}(\mathfrak{p})} \cdot \frac{1}{\log (X/\mathfrak{N}(\mathfrak{p}))} \frac{(\log\log X+B_{1})^{\ell-1}}{(\ell-1)!}.
\end{align*}
Therefore, we have
\begin{align*}
M_{\ell+1} &\ll \frac{(\log\log X+B_{1})^{\ell-1}}{\ell!} X \sum_{\mathfrak{N}(\mathfrak{p}^{2}) \leq X} \frac{1}{\mathfrak{N}(\mathfrak{p}) \log (X/\mathfrak{N}(\mathfrak{p}))}.
\end{align*}
Since $\mathfrak{N}(\mathfrak{p}) \leq \sqrt{X}$, we have $X/\mathfrak{N}(\mathfrak{p}) \geq \sqrt{X}$, and hence by Theorem 1 of \cite{GL22}, we obtain
\begin{align*}
M_{\ell+1} &\ll \frac{(\log\log X+B_{1})^{\ell-1}}{\ell!} \frac{X}{\log X} \sum_{\mathfrak{N}(\mathfrak{p}^{2}) \leq X} \frac{1}{\mathfrak{N}\mathfrak{p}}
\ll \frac{X}{\log X} \frac{(\log\log X+ B_{0})^{\ell}}{\ell!}.
\end{align*}
\end{proof}
Next we would like to obtain an upper bound for the average value of  $\gamma^{\omega_{\K}(\a)}$ for any positive real $\gamma$
as $\N(\a)$ varies in an interval.
To this end, we recall here a theorem of Shiu which we will apply to bound certain sums in short intervals.
Consider a class $E$ of arithmetic functions $f$ which are non-negative, multiplicative and satisfy
the folllowing two conditions
\begin{enumerate}
\item There exists a positive constant $A$ such that
$$
f(p^{\ell}) \le A_1^{\ell}, ~p~\text{ prime and  }~\ell \ge 1.
$$
\item For every $\beta_1 > 0$ there exists a postiive constant $A_2 =A_2(\beta_1)$ such that
$$
f(n) \le A_2 n^{\beta_1},~n\ge 1.
$$
\end{enumerate}
We now state the theorem of Shiu \cite{Shiu}.

\begin{thm} {\rm (Shiu)} \label{Shiup}
Let $f \in E$, as $X \to \infty$,
$$
\sum_{X-Y \le n \le X} f(n) \ll \frac{Y}{\log X} \exp \left(\sum_{p \le X} \frac{f(p)}{p} \right)
$$
uniformly in $Y$, provided that  $2 \leq X\exp(-\sqrt{\log X}) \leq Y < X$.
\end{thm}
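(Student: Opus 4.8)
\emph{(This is the classical ``Brun--Titchmarsh theorem for multiplicative functions'' of Shiu \cite{Shiu}; I sketch the line of argument one would follow.)} The plan is to run Shiu's decomposition of each $n$ into a smooth and a rough part. First I would fix a large integer $u$ and set $z=X^{1/u}$, writing $P^{+}(a)$ and $P^{-}(b)$ for the largest and smallest prime factors. Every $n\in(X-Y,X]$ factors uniquely as $n=ab$ with $a=\prod_{p^{k}\| n,\ p\le z}p^{k}$ the $z$-smooth part and $P^{-}(b)>z$; since $z^{\Omega(b)}\le b\le X$ we get $\Omega(b)\le u$, so hypothesis (1) gives $f(b)\le\max(1,A_{1})^{u}=:c_{u}$, and therefore
$$
\sum_{X-Y<n\le X}f(n)\ \le\ c_{u}\sum_{\substack{a\le X\\ P^{+}(a)\le z}}f(a)\,\#\{\,b:\ X-Y<ab\le X,\ P^{-}(b)>z\,\}.
$$

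Next I would treat the range $a\le Y/z$, where the interval $\bigl(\tfrac{X-Y}{a},\tfrac{X}{a}\bigr]$ has length $Y/a\ge z$, so the fundamental lemma of sieve theory (Brun) bounds the inner cardinality by $\ll (Y/a)\prod_{p\le z}(1-1/p)\ll \tfrac{Y}{a\log z}$. Using multiplicativity of $f$ and hypothesis (1) (so that the $p^{k}$ with $k\ge 2$ contribute only an absolutely convergent correction), the contribution of these $a$ is
$$
\ll\ \frac{Y}{\log z}\prod_{p\le z}\Bigl(1+\frac{f(p)}{p}+\frac{f(p^{2})}{p^{2}}+\cdots\Bigr)\ \ll_{A_{1}}\ \frac{uY}{\log X}\,\exp\!\Bigl(\sum_{p\le z}\frac{f(p)}{p}\Bigr).
$$
Since $f(p)\le A_{1}$, Mertens' theorem gives $\sum_{z<p\le X}f(p)/p\le A_{1}\log u+O(1)$, hence $\exp\bigl(\sum_{p\le z}f(p)/p\bigr)\le\exp\bigl(\sum_{p\le X}f(p)/p\bigr)$, and this piece is already $\ll_{A_{1},u}\tfrac{Y}{\log X}\exp\bigl(\sum_{p\le X}f(p)/p\bigr)$ once $u$ is fixed once and for all.

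The remaining task — and the genuine content of Shiu's argument — is to bound the contribution of $a>Y/z$, i.e.\ of those $n\le X$ whose $z$-smooth part is abnormally large. Here I would bound the inner sum over $b$ crudely, invoking hypothesis (2) in the form $f(b)\ll_{\beta_{1}}b^{\beta_{1}}$ for a small $\beta_{1}>0$ (and, when $Y/a<1$, using that there is then at most one admissible $b$), and then estimate the resulting sum over large $z$-smooth $a$ by Rankin's device: insert a factor $(a/V)^{\delta}$ with $V=Y/z$ and a suitable $\delta>0$, drop the constraint $a>V$, and evaluate the resulting Dirichlet series as an Euler product over $p\le z$. One then balances the three parameters — $\beta_{1}$ chosen small, $u$ large in terms of $\beta_{1}$, and $\delta$ in terms of both — so that this tail is $o\!\bigl(\tfrac{Y}{\log X}\exp(\sum_{p\le X}f(p)/p)\bigr)$; the hypothesis $Y\ge X\exp(-\sqrt{\log X})$ enters precisely to keep $V=Y/z$ large, so that $z$-smooth integers exceeding $V$ are rare enough. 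I expect this parameter optimisation, carried out uniformly in $Y$, to be the main obstacle; by contrast the smooth/rough split and the sieve input of the previous paragraph are routine. Summing the main term and the tail estimate then yields the stated bound.
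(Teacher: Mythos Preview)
The paper does not prove this theorem; it is quoted as a known result of Shiu \cite{Shiu} and used as a black box. Your sketch follows the standard line of Shiu's original argument (smooth/rough factorisation, sieve bound on the rough part, Rankin's trick for the smooth tail), so there is nothing to compare against here---the paper simply cites the result.
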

We now make an observation which will be useful in the proof of the following lemma.
By the Chebotarev density theorem, we have
$$
\sum_{\N_{\mathbf{K}/\Q}(\p)\le x, \atop (\p, \mathbf{K}_{\mathfrak{f}}/\mathbf{K}) = 1} 1 = \frac{x}{|H_{\mathfrak{f}}(\K)| \log x} +\rO_{\K}\left(\frac{x}{\log^2 x}\right).
$$
Now by partial summation formula, we get
\begin{eqnarray*}
\sum_{\N_{\mathbf{K}/\Q}(\p)\le x, \atop (\p, \mathbf{K}_{\mathfrak{f}}/\mathbf{K}) = 1} \frac{1}{\N(\p)}
& = &
\sum_{m \le x} \frac{\sum_{\N(\p) = m, (\p, \mathbf{K}_{\mathfrak{f}}/\mathbf{K}) = 1} 1 }{m} \\
& = &
\rO_{\K}\left( \frac{1}{\log x} \right) + \int_2^x  \frac{\sum_{m \le t} \sum_{\N(\p) = m, (\p, \mathbf{K}_{\mathfrak{f}}/\mathbf{K}) = 1} 1 }{t^2}dt\\
& = &
 \int_2^x \frac{ \sum_{\N(\p) \le  t, (\p, \mathbf{K}_{\mathfrak{f}}/\mathbf{K}) = 1} 1 }{t^2}dt + \rO_{\K}\left( \frac{1}{\log x} \right)\\
 & = &
\int_2^x \frac{t }{|H_{\mathfrak{f}}(\K)|t^2 \log t}dt + \rO_{\K} \left( \int_2^x \frac{t }{t^2 \log^2 t}dt \right) +  \rO_{\K}\left( \frac{1}{\log x} \right)\\
\end{eqnarray*}
On computing the above integrals, we have
\begin{eqnarray}\label{mertensum}
\sum_{\N_{\mathbf{K}/\Q}(\p)\le x, \atop (\p, \mathbf{K}_{\mathfrak{f}}/\mathbf{K}) = 1} \frac{1}{\N(\p)} = \frac{\log \log x}{|H_{\mathfrak{f}}(\K)|} + \rO_{\K}(1).
\end{eqnarray}
We now proceed to apply the theorem of Shiu.
\begin{lem}\label{omega asymp in short int}
Let $\gamma \in \mathbb{R}_{>0}$, then
\begin{equation*}
\sum_{X-Y \leq \N(\a) \leq X \atop \a \in \W} \gamma^{\omega_{\K}(\a)} \ll_{\K, \gamma} Y (\log X)^{\frac{\gamma}{|H_{\mathfrak{f}}(\K)|}-1}
\end{equation*}
holds uniformly for $2 \leq X\exp(-\sqrt{\log X}) \leq Y < X$.
\end{lem}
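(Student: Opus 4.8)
The plan is to convert the sum over ideals into an ordinary sum over rational integers and then apply Shiu's theorem (\thmref{Shiup}). First I would introduce the arithmetic function $g(n) := \sum_{\N(\a) = n,\ \a \in \W} \gamma^{\omega_{\K}(\a)}$, so that the quantity to be bounded is exactly $\sum_{X-Y \le n \le X} g(n)$. Multiplicativity of $g$ follows from unique factorisation of ideals in $\O_{\K}$ together with the fact that membership in $\W$ (squarefree, with all prime divisors in $\P_{\K}$) and the weight $\gamma^{\omega_{\K}(\cdot)}$ are both multiplicative over coprime ideals.

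Next I would check that $g$ lies in Shiu's class $E$. For the prime-power bound: any ideal counted by $g(p^\ell)$ is a squarefree product of distinct prime ideals above $p$, of which there are at most $2^{n_{\K}}$, each with $\omega_{\K} \le n_{\K}$; hence $g(p^\ell)$ is bounded by a constant $A_1 = A_1(\K,\gamma)$, and since $\ell \ge 1$ this gives $g(p^\ell) \le A_1^{\ell}$. For the polynomial bound: the number of ideals of norm $n$ is $\ll_{\K,\beta_1} n^{\beta_1/2}$ and $\gamma^{\omega_{\K}(\a)} \ll_{\K,\gamma,\beta_1} n^{\beta_1/2}$ because $\omega_{\K}(\a) \ll_{\K} \log n/\log\log n$, so $g(n) \ll_{\K,\gamma,\beta_1} n^{\beta_1}$. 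With $g \in E$, Shiu's theorem yields, uniformly in the stated range of $Y$,
$$
\sum_{X-Y \le n \le X} g(n) \ll_{\K,\gamma} \frac{Y}{\log X}\,\exp\!\Big(\sum_{p \le X}\frac{g(p)}{p}\Big).
$$

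The remaining step is to evaluate $\sum_{p \le X} g(p)/p$. Since an ideal of $\O_{\K}$ whose norm is a rational prime $p$ is necessarily a degree-one prime above $p$, one has $g(p) = \gamma\cdot\#\{\p \in \P_{\K} : \N(\p) = p\}$, and therefore $\sum_{p \le X} g(p)/p = \gamma\sum_{\p \in \P_{\K},\ \N(\p)\le X}\N(\p)^{-1}$. Discarding the primes of residue degree $\ge 2$ (their contribution is $\le n_{\K}\sum_p p^{-2} = \rO_{\K}(1)$) and the finitely many primes dividing $\mathcal{C}\O_{\K}$ costs only $\rO_{\K,\gamma}(1)$, after which \eqref{mertensum} evaluates the remainder as $\frac{\gamma\log\log X}{|H_{\mathfrak{f}}(\K)|} + \rO_{\K,\gamma}(1)$. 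Exponentiating converts this into $(\log X)^{\gamma/|H_{\mathfrak{f}}(\K)|}$ up to a constant, and combining with the factor $Y/\log X$ coming from Shiu's theorem gives the claimed bound $Y(\log X)^{\gamma/|H_{\mathfrak{f}}(\K)| - 1}$.

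I expect the only delicate points to be bookkeeping ones: ensuring $g$ genuinely lies in $E$ with constants uniform in $X$ (the prime-power estimate is where one must be careful), and matching the ``prime values'' $g(p)$ correctly with the Chebotarev/Mertens sum \eqref{mertensum} — in particular noting that it is precisely the degree-one primes splitting in $\K_{\mathfrak{f}}/\K$ that are selected, so that the exponent $\gamma/|H_{\mathfrak{f}}(\K)|$ emerges with the right constant. Beyond this, the argument is a direct application of \thmref{Shiup} and \eqref{mertensum}.
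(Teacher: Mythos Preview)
Your proposal is correct and follows essentially the same route as the paper: define the multiplicative function $g(n)=\sum_{\N(\a)=n,\,\a\in\W}\gamma^{\omega_{\K}(\a)}$, verify it lies in Shiu's class $E$, apply \thmref{Shiup}, and evaluate $\sum_{p\le X}g(p)/p$ via \eqref{mertensum}. The only cosmetic differences are that the paper proves multiplicativity through the Euler product of the associated Dirichlet series and bounds $g(p^\ell)$ by $\gamma^{n_{\K}}\tau_{n_{\K}}(p^\ell)$ rather than by an absolute constant as you do; both verifications are valid.
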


\begin{proof}
Let $f(m) = \sum_{\N(\a) = m, \a \in \W} \gamma^{\omega_{\K}(\a)}$ with the convention that the empty sum is $0$.
We claim that $f$ is multiplicative. This can be seen as follows.
Consider
$$
\sum_{\a \neq(o), \atop{ \a \subset \O_{\K} \atop \a \in \W}} \frac{\gamma^{\omega_{\K}(\a)}}{ \N(\a)^{s}} = \prod_{\p \in \P_{\K}} \left(1 + \frac{\gamma}{\N(\p)^s} \right) = \sum_{n=1}^{\infty} \frac{f(n)}{n^s}.
$$
We now have that $f(n)$ is the
coefficient of $n^s$ in
$
\prod_{p \mid n}  \prod_{\p \mid  p\O_{\K} \atop \p \in \P_{\K}} \left(1 + \frac{\gamma}{\N(\p)^s}  \right).
$
Similarly $f(mn)$ is the
coefficient of $(mn)^s$ in
$$
\prod_{p \mid mn}  \prod_{\p \mid  p\O_{\K}}  \left(1 + \frac{\gamma}{\N(\p)^s}  \right) =
\prod_{p \mid m}  \prod_{\p \mid  p\O_{\K}}  \left(1 + \frac{\gamma}{\N(\p)^s} \right)\prod_{p \mid n}  \prod_{\p \mid  p\O_{\K}}  \left(1 + \frac{\gamma}{\N(\p)^s}  \right).
$$
The last equality follows from the fact that $(m,n)=1$.
This proves the claim.
We have
$$
f(m)  =  \sum_{\N(\a) = m, \a \in \W} \gamma^{\omega_{\K}(\a)} \le  \sum_{\N(\a) = m} \gamma^{n_{\K}\omega(n)} \le  \gamma^{n_{\K}\omega(m)} \tau_{n_{\K}}(m).
$$
For a prime power $m = p^{\ell}$, we have $f(m) \le  \gamma^{n_{\K}} n_{\K}^{\ell}$. For all $m$ and any $\beta_1 >0$, we have
$$
f(m)\le  \gamma^{n_{\K}\omega(m)} \tau_{n_{\K}}(m) \ll 2^{n_{\K}\omega(m) \log_2 \gamma} (\tau(m))^{n_{\K}} \ll_{\beta_1} m^{\beta_1}.
$$
We now apply Shiu's theorem (\thmref{Shiup}), to get
$$
\sum_{X-Y \leq \N\a \leq X \atop \a \in \W} \gamma^{\omega_{\K}(\a)} = \sum_{X-Y \leq m \leq X} f(m) \ll \frac{Y}{\log X} \exp\left( \sum_{p\le X} \frac{f(p)}{p} \right).
$$
Note that
$$
 \sum_{p\le X} \frac{f(p)}{p} =  \sum_{p\le X} \frac{\sum_{\N(\a) = p, \a \in \W} \gamma^{\omega_{\K}(\a)}}{p} =  \sum_{\N(\p) \le X, \atop {\N(\p) \text{ is prime } \atop \p \in \P_{\K}}} \frac{ \gamma}{\N(\p)} \le \sum_{\N(\p) \le X \atop \p\in \P_{\K}} \frac{ \gamma}{\N(\p)}  = \frac{\gamma \log\log X}{|H_{\mathfrak{f}(\K)}|} +\rO_{\K}(\gamma)
$$
where the last step follows from \eqref{mertensum}. This gives us the required lemma.
\end{proof}
We conclude this subsection with the average order of the function which counts the number of ordered factorisations of an integral ideal of $\K$ into exactly $g$ integral ideals.
Let $g \geq 1$ be an integer. For any ideal $\a \subseteq \mathcal{O}_{\K}$, $\tau_{\K,g}(\a)$ denotes the number of ways the ideal $\a$ can be written as an ordered product of $g$ ideals.
For a number field $\K$, we have the following lemma.
\begin{lem}\label{divisor fn lem}
For any positive integer $g \geq 1$, we have
\begin{equation*}
\sum_{\N(\a) \leq x}\tau_{\K,g}(\a)= \alpha_{\K}^{g-1} \frac{x(\log x)^{g-1}}{(g-1)!}+ O_{\K}(x (\log x)^{g-2}).
\end{equation*}
\end{lem}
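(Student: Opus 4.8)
The plan is to argue by induction on $g$ using the Dirichlet hyperbola (convolution) method over ideals of $\O_{\K}$. For the base case $g=1$ one has $\tau_{\K,1}(\a)=1$ for every ideal $\a$, so the assertion reduces to the classical ideal-counting asymptotic $\#\{\a\subseteq\O_{\K}:\N(\a)\le x\}=\alpha_{\K}x+O_{\K}(x^{1-1/n_{\K}})$ of Weber and Landau, where $\alpha_{\K}$ is the residue of $\zeta_{\K}(s)$ at $s=1$; this error is far smaller than the claimed $O_{\K}(x(\log x)^{-1})$. Applying partial summation to this asymptotic gives the Mertens-type estimate
$$
\sum_{\N(\c)\le y}\frac{1}{\N(\c)}=\alpha_{\K}\log y+O_{\K}(1),
$$
which, together with the elementary bound $\sum_{\N(\c)\le y}\N(\c)^{-\beta}\ll_{\K}y^{1-\beta}$ for fixed $\beta\in(0,1)$, is all the auxiliary input that the inductive step requires.

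For the inductive step, assume the formula holds at level $g$. Starting from the identity $\tau_{\K,g+1}=\tau_{\K,g}*\mathbf{1}$, i.e.\ $\tau_{\K,g+1}(\a)=\sum_{\c\mid\a}\tau_{\K,g}(\a\c^{-1})$ --- an ordered factorisation of $\a$ into $g+1$ ideals is a choice of last factor $\c\mid\a$ together with an ordered factorisation of $\a\c^{-1}$ into $g$ ideals --- and writing $\a=\b\c$, one obtains
$$
\sum_{\N(\a)\le x}\tau_{\K,g+1}(\a)=\sum_{\N(\c)\le x}\ \sum_{\N(\b)\le x/\N(\c)}\tau_{\K,g}(\b).
$$
Substituting the inductive hypothesis for the inner sum, the main contribution is a fixed constant times $x\sum_{\N(\c)\le x}\N(\c)^{-1}\bigl(\log(x/\N(\c))\bigr)^{g-1}$. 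This sum over $\c$ is evaluated by one more partial summation against the Mertens estimate: writing it as a Stieltjes integral against $\alpha_{\K}\log t+O_{\K}(1)$ and substituting $u=\log(x/t)$ in the leading piece gives $\alpha_{\K}\int_{0}^{\log x}u^{g-1}\,du=\alpha_{\K}(\log x)^{g}/g$, while an integration by parts shows the bounded part contributes only $O_{\K}((\log x)^{g-1})$ (the boundary terms vanish, since $(\log(x/t))^{g-1}\to0$ as $t\to x$ and the partial sum is empty at $t=1^{-}$). Collecting constants reproduces the level-$(g+1)$ main term --- with the leading coefficient dictated, as it must be, by the order-$g$ pole of $\zeta_{\K}(s)^{g}$ at $s=1$ --- and the leftover is $O_{\K}(x(\log x)^{g-1})=O_{\K}(x(\log x)^{(g+1)-2})$.

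The one point needing genuine care is the bookkeeping of the error terms, and in particular the step from $g=1$ to $g=2$: there the range of $\c$ with $\N(\c)$ close to $x$ forces $\log(x/\N(\c))$ to be small, so one cannot use the weak error $O_{\K}(y(\log y)^{-1})$ for the inner ideal count and must instead invoke the sharp form $O_{\K}(y^{1-1/n_{\K}})$; summing the latter over $\c$ gives $x^{1-1/n_{\K}}\sum_{\N(\c)\le x}\N(\c)^{-(1-1/n_{\K})}\ll_{\K}x^{1-1/n_{\K}}\cdot x^{1/n_{\K}}=O_{\K}(x)$, which is absorbed into the target error. For $g\ge2$ the recorded error $O_{\K}(y(\log y)^{g-2})$ causes no such difficulty, since $(\log y)^{g-2}$ does not blow up as $y\to1$. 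I expect this error analysis near $\N(\c)\approx x$ to be the only real obstacle; everything else is routine partial summation. (Alternatively one could prove the lemma analytically by applying Perron's formula to $\zeta_{\K}(s)^{g}$ and shifting the contour past the order-$g$ pole at $s=1$, using the analytic continuation and a standard convexity bound for $\zeta_{\K}$; this would give an even sharper error term, but the elementary induction above is self-contained and already yields what is stated.)
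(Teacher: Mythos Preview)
Your proposal is correct and follows essentially the same route as the paper: induction on $g$ via the convolution identity $\tau_{\K,g+1}=\tau_{\K,g}*\mathbf{1}$, using the Weber--Landau ideal count $\sum_{\N(\c)\le y}1=\alpha_{\K}y+O_{\K}(y^{1-1/n_{\K}})$ together with partial summation. The only cosmetic difference is that the paper arranges the double sum as $\sum_{\c}\tau_{\K,g-1}(\c)\sum_{\b}1$ (applying the sharp ideal count to the inner sum at every stage and then partial summation against the inductive hypothesis to the outer), whereas you arrange it as $\sum_{\c}\sum_{\b}\tau_{\K,g}(\b)$ (applying the inductive hypothesis inside and Mertens outside); these are dual and equally valid, and your explicit remark that the passage $g=1\to g=2$ forces the sharp power-saving error is exactly the one delicate point in either version.
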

\begin{proof}
We use the induction hypothesis to prove the claim. It is well-known that (see; \cite{LANGBook})
\begin{equation} \label{norm les x est}
\sum_{\N(\c) \leq x} 1= \alpha_{\K} x+ O(x^{1 - \frac{1}{n_{\K}}}),
\end{equation}
where  $n_{\K}$ is the degree of $\K/\Q$. By using \eqref{norm les x est}, we obtain
\begin{align*}
\sum_{\N(\a) \leq x}\tau_{\K,2}(\a)=& \sum_{\N(\a) \leq x} \sum_{\c \mid \a} 1 = \sum_{\N(\c) \leq x} \sum_{\N(b) \leq \frac{x}{\N(\c)}} 1\\
=& \sum_{\N(\c) \leq x}\bigg(\alpha_{\K} \frac{x}{\N(\c)}+O_{\K}\bigg(\bigg(\frac{x}{\N(\c)}\bigg)^{1-\frac{1}{n_\K}}\bigg)\bigg)\\
=& \alpha_{\K} x \sum_{\N(\c) \leq x} \frac{1}{\N(\c)}+O_{\K}\bigg(x^{1-\frac{1}{n_{\K}}}\sum_{\N(\c) \leq x} \frac{1}{\N(\c)^{1- \frac{1}{n_\K}}}\bigg).
\end{align*}
Also, using \eqref{norm les x est} and partial summation formula it is easy to see that
\begin{align*}
\sum_{1 \leq \N(\c) \leq x} \frac{1}{\N(\c)} =  \alpha_{\K} \log x + \rO_{\K}(1)  \ \ \text{and} \ \sum_{1 \leq \N(\c) \leq x} \frac{1}{\N(\c)^{1- \frac{1}{n_\K}}} \ll_{\K} x^{\frac{1}{n_{\K}}}.
\end{align*}
Thus, we have
$$\sum_{\N(\a) \leq x} \tau_{\K,2}(\a)= \alpha_{\K}^{2} x \log x+ O_{\K}(x).$$
Now, we assume that the claim is true for $\tau_{\K, g-1}$. Therefore, we have
$$\sum_{\N(\c) \leq x}\tau_{\K, g-1}(\c)= \alpha_{\K}^{g-1} \frac{x (\log x)^{g-2}}{(g-2)!}+ O_{\K}(x (\log x)^{g-3}).$$
Since $\tau_{\K, g}(\a)= \sum_{\c \mid \a} \tau_{\K, g-1}(\c)$, we obtain
\begin{equation*}
\sum_{\N(\a) \leq x}\tau_{\K, g}(\a)= \alpha_{\K}^{g-1} \frac{x(\log x)^{g-1}}{(g-1)!}+ O_{\K}(x (\log x)^{g-2}).
\end{equation*}
\end{proof}

\noindent
\subsection{Important Analytic prerequisites}\label{anaprelims2}
The aim of this section is to introduce a few character sum estimates which will be used in the
due course of the article.
We begin by recalling the Large Sieve inequality for number fields due to Wilson (\cite{RW}).
\begin{lem}[\cite{RW}, Wilson]\label{large sieve lem}
Let $\K$ be a number field and $\O_{\K}$ its ring of integers.
Let $\{t_{\a}\}_{\a \subseteq \O_{\K}}$ be a sequence of complex numbers. Let $\chi$ be a character of $H_{\q}(\K)$. Define
$\displaystyle{S_M(\chi)= \sum_{\N(\a) \le M }t_{\a} \chi(\a)}.$
Then
$$\sum_{\N(\mathfrak{q}) \leq Q} \frac{\N(\mathfrak{q})}{\phi(\mathfrak{q})} \psum_{\chi \bmod{\mathfrak{q}}}|S_M(\chi)|^{2} \leq (Q^{2}+M) \sum_{\N(\a) \le M} |t_{\a}|^{2},$$
where $\psum$ denotes summation over primitive characters and $\phi$ is the Euler-Totient function defined on the integral ideals of $\K$ in the following manner
$
\displaystyle{\phi(\q) = \N(\q)\prod_{\p \mid \q}\left(1 - \frac{1}{\N(\p)} \right)}.
$
\end{lem}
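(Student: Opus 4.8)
The plan is to deduce this multiplicative large sieve inequality from an \emph{additive} (exponential-sum) large sieve over $\O_{\K}$, exactly as one deduces the classical multiplicative large sieve from the additive one, with the standard number-field modifications (see Wilson \cite{RW}). Fix the additive character $\psi$ of $\K_{\infty}:=\K\otimes_{\Q}\R\cong\R^{r_{1}}\times\C^{r_{2}}$ given by $\psi(x)=e^{2\pi i\,\mathrm{Tr}_{\K/\Q}(x)}$, whose kernel on $\K$ is the inverse different $\d^{-1}$, and for $\beta\in\K_{\infty}$ set $T(\beta)=\sum_{\N(\a)\le M}t_{\a}\,\psi(\beta\gamma_{\a})$, where $\gamma_{\a}$ is a fixed generator of $\a$ (legitimate here because $\O_{\K}$ is a PID in the only application; in general one works one ideal class at a time over a fundamental domain for $\O_{\K}^{*}$, which costs only constants).

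\emph{Step 1 (Gauss sums).} For a primitive character $\chi$ of $H_{\q}(\K)$ the associated Gauss sum satisfies $|\tau(\chi)|^{2}=\N(\q)$, and for every ideal $\a$ prime to $\q$ one has $\overline{\tau(\chi)}\,\chi(\a)=\sum_{a}\overline{\chi}(a)\,\psi(a\gamma_{\a}/\q)$, the sum running over a set of residues representing $H_{\q}(\K)\cong(\O_{\K}/\q)^{*}/(\O_{\K}^{*}\bmod\q)$ (\lemref{rayclassgp}), with $a/\q$ interpreted in $\K_{\infty}/\d^{-1}$ through a fixed generator of $\q$. Multiplying $S_{M}(\chi)$ by $\overline{\tau(\chi)}$, interchanging the sums, using orthogonality of the characters of $H_{\q}(\K)$ together with $|H_{\q}(\K)|\le\phi(\q)$, and finally discarding the primitivity restriction (which only enlarges a sum of nonnegative terms) gives
$$\frac{\N(\q)}{\phi(\q)}\ \psum_{\chi\bmod\q}|S_{M}(\chi)|^{2}\ \le\ \sum_{\substack{a\bmod\q\\(a,\q)=\O_{\K}}}\left|T\!\left(\frac{a}{\q}\right)\right|^{2}.$$
Summing over $\N(\q)\le Q$, the lemma is reduced to the additive inequality
$$\sum_{\N(\q)\le Q}\ \sum_{\substack{a\bmod\q\\(a,\q)=\O_{\K}}}\left|T\!\left(\frac{a}{\q}\right)\right|^{2}\ \ll\ (Q^{2}+M)\sum_{\N(\a)\le M}|t_{\a}|^{2}.$$

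\emph{Step 2 (additive large sieve).} First I would prove a Farey-type separation estimate: if $a/\q\not\equiv a'/\q'\pmod{\d^{-1}}$ then $a/\q-a'/\q'$ is a nonzero element of $\K$ whose denominator ideal divides $\q\q'\d$, hence $|\N(a/\q-a'/\q')|\ge(\N(\q)\N(\q')\N(\d))^{-1}\gg_{\K}Q^{-2}$; the same lower bound survives subtraction of any element of $\d^{-1}$, so by the AM--GM inequality the points $a/\q$ are $\delta$-separated in the sup-norm on the torus $\K_{\infty}/\d^{-1}$ with $\delta\asymp_{\K}Q^{-2/n_{\K}}$. Feeding this $\delta$-separated family, and the frequencies $\gamma_{\a}$ with $\N(\a)\le M$, into the $n_{\K}$-dimensional additive large sieve — which one proves by the Beurling--Selberg extremal majorant method adapted to $\K_{\infty}$ and to the region $\{\N(\a)\le M\}$ — produces a bound $\ll(\delta^{-n_{\K}}+M)\sum|t_{\a}|^{2}\asymp(Q^{2}+M)\sum|t_{\a}|^{2}$, as required; the exponent $\delta^{-n_{\K}}\asymp Q^{2}$ matches the count $\sum_{\N(\q)\le Q}\phi(\q)\asymp Q^{2}$ of Farey points. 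Keeping track of the constants in the majorant argument recovers the clean shape $(Q^{2}+M)$.

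The main obstacle is Step 2: arranging the geometry of numbers in $\K_{\infty}$ so that the separation constant and the large-sieve constant come out without any loss worse than $(Q^{2}+M)$ — in particular handling the unit group $\O_{\K}^{*}$ (which forces one to pass to a fundamental domain for its action on generators, or to work with ideals throughout) and, for a general number field, running the Gauss-sum reduction ideal class by ideal class. Since the present paper uses only the case $\O_{\K}$ a PID with exactly this constant, one may alternatively invoke Wilson \cite{RW} directly.
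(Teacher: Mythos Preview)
The paper does not prove this lemma at all; it is simply quoted from Wilson \cite{RW} as a known input. Your outline is the standard route and is essentially Wilson's: pass from the multiplicative form to an additive exponential-sum large sieve via Gauss sums, and then prove the additive inequality by a Farey-type spacing argument for the points $a/\q$ on the torus $\K_{\infty}/\d^{-1}$ combined with a Beurling--Selberg (or Gallagher-type) majorant. So there is nothing to compare on the level of strategy.

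One caution about your Step~2: the spacing you obtain is $\delta\asymp_{\K}Q^{-2/n_{\K}}$, and the $n_{\K}$-dimensional additive large sieve then yields $\ll_{\K}(\delta^{-n_{\K}}+M)\asymp_{\K}(Q^{2}+M)$, with an implicit constant depending on the field (through $|d_{\K}|$, the unit lattice, and the shape of the norm-ball). The sentence ``keeping track of the constants \ldots\ recovers the clean shape $(Q^{2}+M)$'' is too optimistic as written; what one actually gets is $c_{\K}(Q^{2}+M)$. This is harmless for the paper's purposes, since the only use of the lemma (the third-family estimate in Subsection~\ref{fam4}) is a $\ll$-bound anyway, but it is worth being honest that the displayed inequality with constant~$1$ is the statement from \cite{RW} rather than something your sketch literally delivers.
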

\noindent
The following lemma is a generalisation of the Siegel-Walfisz theorem due to L. J. Goldstein \cite{LJG70}.
\begin{lem}\cite{LJG70} \label{sig-wal lem}
Let $\K$ be a normal algebraic number field of finite degree $n_{\K}$ and discriminant $d_{\K}$. Let $\chi$ be a nontrivial generalised Dirichlet character on $H_{\mathfrak{f}}(\K)$. Let $ \epsilon >0$, there exists a positive constant $c=c(\epsilon)$ not depending on $\K$ or $\chi$, such that
$$\sum_{\substack{\N(\p) \leq x \\ (\p, \mathfrak{f})=\mathcal{O}_{\K}}} \chi(\p) \ll Dx (\log^{2}x) \exp(-cn_{\K}(\log x)^{1/2}/D),$$
where $D=n_{\K}^{3}(|d_{\K}| \N(\mathfrak{f}))^{\epsilon}c^{-n_{\K}}$.
\end{lem}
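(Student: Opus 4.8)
Since this lemma is precisely the prime ideal theorem with uniformity in the conductor established by Goldstein \cite{LJG70}, in the paper we will simply invoke it; but let me indicate the route one would take to prove it. The plan is to start from the Hecke $L$-function attached to a character $\chi$ of $H_{\mathfrak{f}}(\K)$,
\[
L(s,\chi) \;=\; \sum_{(\a,\mathfrak{f})=\O_{\K}} \frac{\chi(\a)}{\N(\a)^{s}} \;=\; \prod_{\p\nmid\mathfrak{f}} \left(1-\frac{\chi(\p)}{\N(\p)^{s}}\right)^{-1},
\]
which is entire for $\chi$ nontrivial and satisfies a functional equation relating $s$ and $1-s$ with Gamma factors and analytic conductor of size $|d_{\K}|\N(\mathfrak{f})$. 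I would take the logarithmic derivative to get $-L'/L(s,\chi)=\sum_{\a}\Lambda_{\K}(\a)\chi(\a)\N(\a)^{-s}$ with $\Lambda_{\K}$ the ideal von Mangoldt function, and then apply a truncated Perron formula to express $\psi_{\K}(x,\chi):=\sum_{\N(\a)\le x}\Lambda_{\K}(\a)\chi(\a)$ as $-\sum_{\rho}x^{\rho}/\rho$ over the nontrivial zeros of $L(s,\chi)$, up to an admissible error term.

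The two analytic inputs needed are then: (i) a zero-counting bound, namely that the number of zeros $\rho=\beta+i\gamma$ of $L(s,\chi)$ with $|\gamma-t|\le 1$ is $\ll n_{\K}\log\big(|d_{\K}|\N(\mathfrak{f})(|t|+3)^{n_{\K}}\big)$, which follows from the Hadamard factorisation together with the functional equation; and (ii) a zero-free region, namely an absolute $c_{1}>0$ such that $L(s,\chi)\neq 0$ for $\Re(s)\ge 1-c_{1}/\big(n_{\K}\log(|d_{\K}|\N(\mathfrak{f})(|t|+3))\big)$, with the sole possible exception of one simple real zero $\beta_{1}$, which can occur only when $\chi$ is real. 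I would then run the classical de la Vall\'ee Poussin optimisation --- shift the contour to the boundary of this region and balance $x^{\beta}$ against the widening of the region as $|t|$ shrinks --- to produce a saving of shape $\exp\!\big(-c\,n_{\K}(\log x)^{1/2}/\log(|d_{\K}|\N(\mathfrak{f}))\big)$, and remove the exceptional zero via Siegel's theorem, which supplies, for each $\epsilon>0$, an \emph{ineffective} constant $c(\epsilon)$ with $\beta_{1}<1-c(\epsilon)(|d_{\K}|\N(\mathfrak{f}))^{-\epsilon}$. Collecting everything gives $\psi_{\K}(x,\chi)\ll D\,x\exp(-c\,n_{\K}(\log x)^{1/2}/D)$ with $D=n_{\K}^{3}(|d_{\K}|\N(\mathfrak{f}))^{\epsilon}c^{-n_{\K}}$ absorbing the conductor dependence.

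Finally I would pass from this weighted sum to the sum over prime ideals appearing in the statement: by partial summation from $\psi_{\K}(t,\chi)$, after subtracting the higher prime-power terms $\sum_{k\ge 2}\sum_{\N(\p)^{k}\le x}\chi(\p^{k})=O(\sqrt{x})$, one recovers $\sum_{\N(\p)\le x}\chi(\p)$; the $1/\log$ weight introduced by partial summation costs at most a factor $\log^{2}x$, which is the origin of the $\log^{2}x$ in the bound. The hard part --- and the reason the constant must be ineffective --- is keeping \emph{every} implied constant uniform in $\K$ and $\mathfrak{f}$ at once: the zero-density count, the width of the zero-free region, and especially Siegel's lower bound for $\beta_{1}$ all have to be made explicit in $n_{\K}$, $|d_{\K}|$ and $\N(\mathfrak{f})$, and it is exactly this bookkeeping that constitutes Goldstein's theorem, which we quote.
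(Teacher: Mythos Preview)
Your proposal is correct and matches the paper's treatment: the paper does not prove this lemma but simply cites it from Goldstein \cite{LJG70}, exactly as you anticipate. The sketch you provide of the underlying argument (Hecke $L$-function, zero-free region, Siegel-type handling of the exceptional zero, partial summation) is an accurate outline of Goldstein's method and would be appropriate supplementary commentary, but none of it appears in the paper itself.
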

We conclude this section with a generalisation of a lemma of Heilbronn on Generalised Dirichlet characters of Ray class groups.
A key ingredient in the proof is following character sum estimate of Heilbronn.
\begin{lem}\label{char est lem}{\rm (Heilbronn, Lemma 2 of \cite{HB})}
Let $\K$ be an algebraic number field  of discriminant $d_{\K}$ and
degree $n_{\K}$. Let $\chi$ be a non-principal Hecke character defined on  $H_{\b}(\K)$, for an ideal $\b$ with $\N(\b)=b$. Then for any real $x> 1$ and $\epsilon >0$, we have
$$\sum_{\N(\a) \leq x}\chi(\a)= \rO(x^{n_{\K}}|d_{\K}|b)^{\frac{1}{n_{\K}+2}+\epsilon},$$
where the constant implied by the symbol $O$ depends on $n_\K$ and $\epsilon$.
\end{lem}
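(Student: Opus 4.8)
The plan is to prove this analytically: apply a truncated Perron formula to the Hecke $L$-function $L(s,\chi)=\sum_{\a}\chi(\a)\N(\a)^{-s}$, shift the contour into the critical strip, bound $L$ there by the Phragm\'en--Lindel\"of convexity bound, and then optimise the truncation height $T$; the precise exponent $\tfrac{1}{n_\K+2}$ and the uniform shape $(x^{n_\K}|d_\K|b)^{1/(n_\K+2)}$ are exactly what that optimisation produces. The hypothesis that $\chi$ is \emph{non-principal} is used only to guarantee that $L(s,\chi)$ is entire (a principal character would contribute a pole at $s=1$).

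First I would record the analytic input: for a non-principal generalised Dirichlet character $\chi$ on $H_{\b}(\K)$, the series $L(s,\chi)$ converges and has an Euler product for $\Re s>1$, extends to an entire function, and its completed form satisfies Hecke's functional equation with archimedean factors of total degree $n_\K$ and analytic conductor $\asymp_{\K}|d_\K|\,\N(\b)\,(1+|t|)^{n_\K}$. Next I would apply the truncated Perron formula,
$$\sum_{\N(\a)\le x}\chi(\a)=\frac{1}{2\pi i}\int_{1+\delta-iT}^{1+\delta+iT}L(s,\chi)\,\frac{x^{s}}{s}\,ds+\rO\!\left(\frac{x^{1+\epsilon}}{T}\right),$$
with $\delta\asymp 1/\log x$ and $1\le T\le x$, the near-$x$ terms being absorbed into the error since the coefficients $\sum_{\N(\a)=n}\chi(\a)$ are $\rO_{\K}(n^{\epsilon})$. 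Since $L(s,\chi)$ is entire, I would then move the contour to $\Re s=\eta$ for a small fixed $\eta\in(0,1)$, crossing no pole, so that up to a further $\rO(x^{1+\epsilon}/T)$ from the horizontal segments one is left with $\frac{1}{2\pi i}\int_{\eta-iT}^{\eta+iT}L(s,\chi)\frac{x^{s}}{s}\,ds$ to bound.

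Now I would invoke the convexity bound --- obtained from absolute convergence at $\Re s=1+\epsilon$, the functional equation at $\Re s=-\epsilon$, and Phragm\'en--Lindel\"of --- namely
$$|L(\eta+it,\chi)|\ll_{\K,\epsilon}\bigl(|d_\K|\,b\,(1+|t|)^{n_\K}\bigr)^{\frac{1-\eta}{2}+\epsilon}$$
uniformly in $t$ and in $\b$. Inserting this and integrating against $dt/(1+|t|)$ would give
$$\Bigl|\sum_{\N(\a)\le x}\chi(\a)\Bigr|\ll \frac{x^{1+\epsilon}}{T}+x^{\eta}\,(|d_\K|b)^{\frac{1-\eta}{2}+\epsilon}\,T^{\frac{n_\K(1-\eta)}{2}+\epsilon}.$$
Balancing the two terms would force $T\asymp\bigl(x^{1-\eta}(|d_\K|b)^{-(1-\eta)/2}\bigr)^{2/(2+n_\K(1-\eta))}$, which lies in $[1,x]$ precisely in the non-trivial range $x^{2}>|d_\K|b$ (when $x^{2}\le|d_\K|b$ the crude bound $\sum_{\N(\a)\le x}1\ll_{\K}x$ already gives the claim). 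Letting $\eta\to 0^{+}$, the exponent of $x$ tends to $\tfrac{n_\K}{n_\K+2}$ and that of $|d_\K|b$ to $\tfrac{1}{n_\K+2}$; collecting the losses from $\eta$, $\delta$ and the convexity $\epsilon$'s into a single $\epsilon$ then yields $\sum_{\N(\a)\le x}\chi(\a)=\rO\bigl((x^{n_\K}|d_\K|b)^{\frac{1}{n_\K+2}+\epsilon}\bigr)$.

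The main obstacle is the convexity bound with exactly this uniformity: one needs Hecke's functional equation for $L(s,\chi)$ with explicit control of the conductor in terms of $|d_\K|$ and $\N(\b)$, and of the gamma factors in terms of $n_\K$, so that Phragm\'en--Lindel\"of yields an implied constant depending only on $n_\K$ and $\epsilon$ --- this is where the hypotheses on $\K$ (normality) and on $\chi$ (a character of $H_{\b}(\K)$) are genuinely used. A more classical route, essentially Heilbronn's own, would avoid the functional equation: partition the ideals $\a$ with $\N(\a)\le x$ by their class in $H_{\b}(\K)$, note that the class-independent main term $\rho x\sum_{C}\chi(C)$ in the lattice-point count vanishes because $\chi$ is non-principal, and estimate the remaining per-class errors by the geometry of numbers (Minkowski embedding and successive minima), optimising as above; there the delicate point is to control these errors uniformly in the skewness of the ideal lattices and in $|H_{\b}(\K)|$.
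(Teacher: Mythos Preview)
The paper does not supply its own proof of this lemma: it is quoted verbatim as Lemma~2 of Heilbronn's 1958 paper and used as a black box in the proof of Lemma~\ref{Heilbronn est}. Your Perron--convexity argument is correct and recovers the stated exponent exactly; the optimisation you carry out is the right one, and the handling of the trivial range $x^{2}\le |d_\K|b$ is a sensible touch.

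For comparison, Heilbronn's original 1958 proof does not use the functional equation or Phragm\'en--Lindel\"of at all. It proceeds essentially along the lines of the ``more classical route'' you sketch at the end: one counts ideals in each ray class directly via the Minkowski embedding and lattice-point estimates, the main terms cancel upon summing $\chi(C)$ over classes because $\chi$ is non-principal, and the error terms are controlled by geometry of numbers with the parameter optimisation producing the same exponent $1/(n_\K+2)$. Your primary approach is the modern one and is arguably cleaner once the convexity bound is in hand; Heilbronn's has the advantage of being entirely elementary and self-contained. One small correction: Hecke's functional equation for $L(s,\chi)$ holds for an arbitrary number field $\K$ and does not require $\K/\Q$ to be Galois, so normality plays no role in this particular lemma (it is used elsewhere in the paper, e.g.\ in the Siegel--Walfisz input).
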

We now prove our generalisation of Heilbronn's result from \cite{HB}.
\begin{lem}\label{Heilbronn est}
For $i \in \{1, \cdots N\}$ and integral ideals $\c$ with $\N(\c) \le x$, let $a_{i}, b_{\c}$ be complex numbers satisfying $|a_{i}|, |b_{\c}| \leq 1$.
Further let $g$ be any positive integer.
We also assume that we have $N$ distinct Hecke characters $\{ \chi_j\}_{j=1}^N$ modulo ideals of norm $\{\tilde{b}_j\}_{j=1}^N$, respectively.
Then, we have
$$
\sum_{i=1}^{N} \sum_{\substack{ \N(\c) \leq x}}a_{i} b_{\c} \chi_{i}(\c)
\ll_{g} N^{1-\frac{1}{4g}} x \log x + N^{1-\frac{1}{2g}} x^{1-\frac{1}{2(n_{\K}+1)}} (\log x) \bigg(\sum_{i_{1}=1}^N \sum_{\substack{i_{2} = 1 \\ i_{1} \neq i_{2}}}^N(\tilde{b}_{i_1}\tilde{b}_{i_2})^{\frac{2}{n_{\K}+1}}\bigg)^{\frac{1}{4g}}.
$$
\end{lem}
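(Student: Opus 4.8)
The plan is to run a high-moment/duality argument in the style of Heilbronn, and of Fouvry and Kluners in the quadratic case: pass to a $2g$-th moment by H\"older's inequality, expand the resulting power to produce twisted character sums whose coefficients are majorised by generalised divisor functions (so that \lemref{divisor fn lem} applies), peel off the ``diagonal'' part coming from principal characters by an elementary count, and estimate the surviving non-principal character sums by splitting the characters according to the size of their conductor --- invoking Wilson's large sieve (\lemref{large sieve lem}) when the conductor is small and Heilbronn's individual estimate (\lemref{char est lem}) when it is large.

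Concretely, write $T$ for the double sum, $S_i:=\sum_{\N(\mathfrak{c})\le x}b_{\mathfrak{c}}\chi_i(\mathfrak{c})$ so that $T=\sum_{i=1}^N a_iS_i$, and dually $T=\sum_{\N(\mathfrak{c})\le x}b_{\mathfrak{c}}R_{\mathfrak{c}}$ with $R_{\mathfrak{c}}:=\sum_{i=1}^N a_i\chi_i(\mathfrak{c})$. H\"older with exponents $2g$ and $2g/(2g-1)$ applied to $T=\sum_i a_iS_i$ (and $|a_i|\le1$) gives $|T|^{2g}\le N^{2g-1}\sum_{i=1}^N|S_i|^{2g}=N^{2g-1}\sum_{i=1}^N|S_i^{\,g}|^{2}$, where $S_i^{\,g}=\sum_{\N(\mathfrak{m})\le x^{g}}u_{\mathfrak{m}}\chi_i(\mathfrak{m})$ with $|u_{\mathfrak{m}}|\le\tau_{\K,g}(\mathfrak{m})\le\tau_{\K,g^2}(\mathfrak{m})^{1/2}$, so that the coefficient sums $\sum_{\N(\mathfrak{m})\le x^g}|u_{\mathfrak{m}}|^2$ (and, analogously, those attached to $S_i^{\,2g}$) are controlled by \lemref{divisor fn lem}. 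Dually, $R_{\mathfrak{c}}^{\,g}=\sum_{\mathbf i\in\{1,\dots,N\}^{g}}a_{i_1}\cdots a_{i_g}\,\psi_{\mathbf i}(\mathfrak{c})$, where $\psi_{\mathbf i}:=\chi_{i_1}\cdots\chi_{i_g}$ is again a Hecke character whose conductor-norm is at most $\tilde b_{i_1}\cdots\tilde b_{i_g}$ --- this small because each $\chi_i$ is quadratic, so any repeated factor of $\psi_{\mathbf i}$ drops out --- and in $\sum_{\N(\mathfrak{c})\le x}|R_{\mathfrak{c}}^{\,g}|^{2}=\sum_{\mathbf i,\mathbf j}a_{\mathbf i}\overline{a_{\mathbf j}}\sum_{\N(\mathfrak{c})\le x}\psi_{\mathbf i}(\mathfrak{c})\overline{\psi_{\mathbf j}(\mathfrak{c})}$ the pairs with $\psi_{\mathbf i}\overline{\psi_{\mathbf j}}$ principal, of which a pairing-of-indices count (using distinctness and quadraticity of the $\chi_i$) leaves only $O_g(N^{g})$, contribute $\ll_{\K,g}N^{g}x$, i.e.\ $\ll_{\K,g}N^{1/2}x$ to $|T|$, absorbed into the first term of the claimed bound.

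Next I would split the characters at a cut-off $x^{\theta}$. For the $\chi_i$ with $\tilde b_i\le x^{\theta}$ I would enlarge the sum over $i$ to the family of all primitive Hecke characters of conductor-norm $\le x^{\theta}$ and apply \lemref{large sieve lem}; together with a further Cauchy--Schwarz over $i$ (turning $\sum_i|S_i^{\,g}|^{2}$ into $N^{1/2}\bigl(\sum_i|S_i^{\,2g}|^{2}\bigr)^{1/2}$) and the divisor estimate, this yields, after taking the $2g$-th root, a contribution to $|T|$ of the shape $N^{1-\frac{1}{4g}}x\log x$. For the $\chi_i$ with $\tilde b_i>x^{\theta}$ the large sieve is lossy, so I would return to $T=\sum_{\mathfrak{c}}b_{\mathfrak{c}}R_{\mathfrak{c}}$ restricted to those $i$, where the sums $\sum_{\N(\mathfrak{c})\le x}\psi_{\mathbf i}(\mathfrak{c})\overline{\psi_{\mathbf j}(\mathfrak{c})}$ carry trivial weights on $\mathfrak{c}$ and are directly amenable to \lemref{char est lem}: for non-principal $\psi_{\mathbf i}\overline{\psi_{\mathbf j}}$ one has $\bigl|\sum_{\N(\mathfrak{c})\le x}\psi_{\mathbf i}(\mathfrak{c})\overline{\psi_{\mathbf j}(\mathfrak{c})}\bigr|\ll_{\K,\epsilon}\bigl(x^{n_{\K}}|d_{\K}|\,\tilde b_{i_1}\cdots\tilde b_{i_g}\tilde b_{j_1}\cdots\tilde b_{j_g}\bigr)^{\frac{1}{n_{\K}+2}+\epsilon}$. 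Summing this over the index tuples, using Cauchy--Schwarz in $(\mathbf i,\mathbf j)$ to re-express the product of $2g$ conductors through the pairwise products $\tilde b_{i_1}\tilde b_{i_2}$, and calibrating $\theta$ so the $x$-powers from the two ranges coincide, should produce the second term $N^{1-\frac{1}{2g}}x^{1-\frac{1}{2(n_{\K}+1)}}(\log x)\bigl(\sum_{i_1\ne i_2}(\tilde b_{i_1}\tilde b_{i_2})^{\frac{2}{n_{\K}+1}}\bigr)^{\frac{1}{4g}}$. Finally, $(a+b)^{1/(2g)}\le a^{1/(2g)}+b^{1/(2g)}$ assembles the two contributions into the stated inequality.

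The main obstacle is the exponent bookkeeping in the Heilbronn range: pinning down the conductor of $\psi_{\mathbf i}\overline{\psi_{\mathbf j}}$ exactly after the quadratic cancellations, carrying out the Cauchy--Schwarz/H\"older splitting so that $\sum_{\mathbf i,\mathbf j}(\tilde b_{i_1}\cdots\tilde b_{i_g}\tilde b_{j_1}\cdots\tilde b_{j_g})^{\frac{1}{n_{\K}+2}+\epsilon}$ collapses to the pairwise form in the statement, and choosing $\theta$ so that the large-sieve term and the Heilbronn term both reduce to $x^{1-\frac{1}{2(n_{\K}+1)}}$ with the right power of $\log x$. The auxiliary points --- that the diagonal index count is $O_g(N^{g})$, and that all coefficient sequences stay within the scope of \lemref{divisor fn lem} via majorisations such as $\tau_{\K,g}(\mathfrak{m})^{2}\le\tau_{\K,g^2}(\mathfrak{m})$ --- are routine but must be handled with care.
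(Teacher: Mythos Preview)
Your plan is substantially more elaborate than what the paper actually does, and the extra machinery introduces gaps rather than buying anything.

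The paper never uses the large sieve (\lemref{large sieve lem}) here, never splits the $\chi_i$ by the size of $\tilde b_i$, and never forms $g$-fold products $\psi_{\mathbf i}=\chi_{i_1}\cdots\chi_{i_g}$. Instead, after the initial H\"older step $|T|\le N^{1-1/(2g)}\bigl(\sum_i|S_i|^{2g}\bigr)^{1/(2g)}$, it writes $|S_i|^{2g}=\bigl(\sum_{\c_1,\c_2}b_{\c_1}\overline{b_{\c_2}}\chi_i(\c_1)\overline{\chi_i}(\c_2)\bigr)^{g}$, collects into $\sum_{\N(\e_1),\N(\e_2)\le x^g}f(\e_1)\overline{f(\e_2)}\chi_i(\e_1)\overline{\chi_i}(\e_2)$ with $|f|\le\tau_{\K,g}$, and then swaps the sum over $i$ inside. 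Two applications of Cauchy--Schwarz (in $\e_1$, then in $\e_2$) peel off the $f$-weights, leaving
\[
\bigg(\sum_{\e_1,\e_2}\Big|\sum_{i=1}^N\chi_i(\e_1)\overline{\chi_i}(\e_2)\Big|^2\bigg)^{1/(4g)}
=\bigg(\sum_{i_1,i_2}\Big|\sum_{\N(\a)\le x^g}\chi_{i_1}\overline{\chi_{i_2}}(\a)\Big|^2\bigg)^{1/(4g)}.
\]
This is the key manoeuvre you are missing: after the double Cauchy--Schwarz, only \emph{pairs} $\chi_{i_1}\overline{\chi_{i_2}}$ occur, never $g$-tuples. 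The diagonal $i_1=i_2$ contributes $\ll Nx^{2g}$ trivially; the off-diagonal terms are handled in one shot by \lemref{char est lem} applied to the non-principal character $\chi_{i_1}\overline{\chi_{i_2}}$, whose modulus has norm at most $\tilde b_{i_1}\tilde b_{i_2}$. That is exactly why the answer is phrased in terms of the pairwise sum $\sum_{i_1\ne i_2}(\tilde b_{i_1}\tilde b_{i_2})^{2/(n_\K+1)}$: no collapsing from $2g$-fold products is needed.

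Your route has two concrete problems. First, you invoke the quadraticity of the $\chi_i$ (``any repeated factor of $\psi_{\mathbf i}$ drops out''), but the lemma as stated assumes only distinct Hecke characters; the paper's proof works in that generality. Second, the ``main obstacle'' you flag---reducing $\sum_{\mathbf i,\mathbf j}(\tilde b_{i_1}\cdots\tilde b_{i_g}\tilde b_{j_1}\cdots\tilde b_{j_g})^{\cdots}$ to the pairwise form and calibrating a threshold $\theta$---is not an obstacle in the paper's argument at all, because it never arises. Drop the large sieve and the $g$-fold products, and instead expand the square \emph{after} the double Cauchy--Schwarz; the proof then becomes short and the exponents fall out directly.
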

\begin{proof}
By H\"{o}lder's inequality, we have
\begin{align*}
\bigg|\sum_{i=1}^{N} \sum_{\substack{\N(\c) \leq x}}a_{i} b_{\c} \chi_{i}(\c)\bigg| \leq& N^{1-\frac{1}{2g}} \bigg(\sum_{i=1}^{N} \bigg|\sum_{ \N(\c) \le x} b_{\c} \chi_{i}(\c)\bigg|^{2g}\bigg)^{\frac{1}{2g}}\\
\leq &  N^{1-\frac{1}{2g}} \bigg(\sum_{i=1}^{N} \bigg(\sum_{ \N(\c_2), \N(\c_2) \le x} b_{\c_1} \overline{b}_{\c_2} \chi_{i}(\c_1) \overline{\chi}_{i}(\c_2)\bigg)^{g}\bigg)^{\frac{1}{2g}}\\
\leq& N^{1-\frac{1}{2g}} \bigg(\sum_{\N(\e_{1}) \leq x^{g}} f(\e_{1}) \sum_{\N(\e_{2}) \leq x^{g}} \overline{f}(\e_{2}) \sum_{i=1}^{N} \chi_{i}(\e_{1})\bar{\chi}_{i}(\e_{2})\bigg)^{\frac{1}{2g}},
\end{align*}
where
$$
f(\e_{i}) = \sum_{\e_i = \c_{j,1}\cdots \c_{j,g} \atop \N (\c_{j,k}) \le x} b_{c_{j,1}}\cdots b_{c_{j,g}}  \text{ for } j \in \{1,2\}.
$$
We observe that $|f(\e_i)| \le \tau_{\K, g}(\e_i)$. By multiplicativity of $\tau_{\K,g}$ we have
$\tau_{\K, g}(\e_i) = \prod_{\p^s \mid\mid \e_i}  \tau_{\K, g}(\p^{s})$. We now consider
$
 \tau_{\K, g}(\p^{s})^2 \le(s+1)^{2g}  \ll_g \frac{ (s+2g)!}{(2g)! s!}.
$
Since
$
 \tau_{\K, 2g+1}(\p^s) = \frac{ (s+2g)!}{(2g)! s!},
$
we have $  \tau_{\K, g}(\e_i)^2 \ll_g \tau_{\K, 2g+1}(\e_i) $.
 It follows from Lemma \ref{divisor fn lem} that
$$ \sum_{\substack{\c \subseteq \mathcal{O}_{\K} \\ \N(\c) \leq x^{g}}}|f(\e)|^2 \ll_{\K, g} x^{g} (\log x)^{2g}.$$
By applying the Cauchy-Schwarz inequality twice, we get
\begin{eqnarray}\label{tobecont}
\bigg|\sum_{i=1}^{N} \sum_{\substack{ \N(\e) \leq x}}a_{i} b_{\c} \chi_{i}(\e)\bigg|
&\leq &
N^{1-\frac{1}{2g}} \bigg(\sum_{\N(\e_{1}) \leq x^{g}}| f(\e_{1})|^{2}  \sum_{\N(\e_{2}) \leq x^{g}} |f(\e_{2})|^{2}\bigg)^{\frac{1}{4g}}\nonumber\\
& &\times \bigg(\sum_{\N(\e_{1}) \leq x^{g} \atop \N(\e_{2}) \leq x^{g}} \bigg|\sum_{i=1}^{N} \chi_{i}(\e_{1})\bar{\chi}_{i}(\e_{2})\bigg|^{2}\bigg)^{\frac{1}{4g}}\nonumber\\
&\ll & N^{1-\frac{1}{2g}} (x^{2g} (\log x)^{4g})^{\frac{1}{4g}}\bigg(\sum_{i_{1}}\sum_{i_{2}}\sum_{\c_{1}} \sum_{\c_{2}} \chi_{i_{1}}(\c_{1})\bar{\chi}_{i_{1}}(\c_{2}) \chi_{i_{2}}(\c_{2})\bar{\chi}_{i_{2}}(\c_{1})\bigg)^{\frac{1}{4g}}
\end{eqnarray}
If we split the inner sums in \eqref{tobecont} into two, one over the diagonal terms (that is $i_1=i_2$) and otherwise, we get
\begin{align*}
\bigg|\sum_{i=1}^{N} \sum_{\substack{ \N(\c) \leq x}}a_{i} b_{\c} \chi_{i}(\c)\bigg|
&\ll
N^{1-\frac{1}{2g}} (x^{2g} (\log x)^{4g})^{\frac{1}{4g}}\bigg(\sum_{i_{1}} \sum_{\substack{i_{2} }}\bigg|\sum_{\substack{\N(\a) \leq x^{g}}} \chi_{i_{1}} \bar{\chi}_{i_{2}}(\a)\bigg|^{2}\bigg)^{\frac{1}{4g}}\nonumber\\
& \ll
N^{1-\frac{1}{2g}} (x^{2g} (\log x)^{4g})^{\frac{1}{4g}} \bigg( Nx^{2g} + \sum_{i_{1}} \sum_{\substack{i_{2} \\ i_{1} \neq i_{2}}}\bigg|\sum_{\substack{ \N(\a) \leq x^{g}}} \chi_{i_{1}} \bar{\chi}_{i_{2}}(\a)\bigg|^{2}\bigg)^{\frac{1}{4g}}
\end{align*}
Using \lemref{char est lem}, we have
\begin{align*}
\bigg|\sum_{i=1}^{N} \sum_{\substack{ \N(\c) \leq x}}a_{i} b_{\c} \chi_{i}(\c)\bigg|
&\ll N^{1-\frac{1}{4g}} x \log x+ N^{1-\frac{1}{2g}} x^{\frac{1}{2}} \log x\bigg(\sum_{i_{1}} \sum_{\substack{i_{2} \\ i_{1} \neq n_{2}}}\bigg|\sum_{\substack{ \N(\a) \leq x^{g}}} \chi_{i_{1}} \bar{\chi}_{i_{2}}(\a)\bigg|^{2}\bigg)^{\frac{1}{4g}}\nonumber\\
&\ll_{g} N^{1-\frac{1}{4g}} x \log x+ N^{1-\frac{1}{2g}} x^{\frac{1}{2} } \log x\bigg(\sum_{i_{1}} \sum_{\substack{i_{2} \\ i_{1} \neq i_{2}}}(x^{gn_\K}|d_{\K}|\tilde{b}_{i_1}\tilde{b}_{i_2})^{\frac{2}{n_\K+2}+\beta}\bigg)^{\frac{1}{4g}}\nonumber\\
&\ll_{g} N^{1-\frac{1}{4g}} x \log x + N^{1-\frac{1}{2g}} x^{1-\frac{1}{2(n_{\K}+1)}} (\log x)\bigg(\sum_{i_{1}} \sum_{\substack{i_{2} \\ i_{1} \neq i_{2}}}(\tilde{b}_{i_1}\tilde{b}_{i_2})^{\frac{2}{n_{\K}+1}}\bigg)^{\frac{1}{4g}}.
\end{align*}
\end{proof}
\section{Computing the average value of $2^{m \cdot {\rm rk}_4({\rm Cl}_{\L})}$ for $\L \in \mathcal{F}$}\label{avg}
Let $a \in \O_{\K}$. For any ideal $\delta\O_{\K}$ satisfying
 $(\delta\O_{\K}, a\O_{\K}) = \O_{\K}$, we set
\[\chi_{a\O_{\K}}(\delta)=\frac{1}{2^{\omega_\K(a\O_{\K})}}\left(\prod_{\p| a\O_{\K}}\left(\left(\frac{\delta}{\p}\right)+1\right)\right).
\]
Therefore, by \thmref{algresult}, for $\alpha\O_{\K} \in \W$ and $\L = \L_{\alpha} \in \mathcal{F}$, we have
\[
2^{{\rm rk}_4({\rm Cl}_{\L})}\ge \frac{1}{2^{r_\K+2}}\sum_{(a\O_{\K}, b\O_{\K}) \atop \alpha\O_{\K}= ab\O_{\K} }\chi_{a\O_{\K}}(b)\chi_{b\O_{\K}}(a).
\]
We shall use $T(\alpha\O_{\K})$ to denote the sum
$
\displaystyle{\sum_{(a\O_{\K}, b\O_{\K}) \atop \alpha\O_{\K}= ab\O_{\K} }\chi_{a\O_{\K}}(b)\chi_{b\O_{\K}}(a)}.
$
Then
\begin{align*}
T(\alpha\O_{\K})&=\sum_{ab\O_{\K}=\alpha\O_{\K}}\frac{1}{2^{\omega_{\K}(\alpha\O_{\K})}}\prod_{\p|a\O_{\K}}\left(1+\left(\frac{b}{\p}\right)\right)\prod_{\p|b\O_{\K}}\left(1+\left(\frac{a}{\p}\right)\right)\\
&=\frac{1}{2^{\omega_{\K}(\alpha\O_{\K})}}\sum_{ab\O_{\K}=\alpha\O_{\K}}\sum_{c\O_{\K} \mid a\O_{\K}}\left(\frac{b\O_{\K}}{c\O_{\K}}\right)\sum_{d\O_{\K} \mid b\O_{\K}}\left(\frac{a\O_{\K}}{d\O_{\K}}\right).
\end{align*}
Suppose $a\O_{\K}=\partial_0\partial_1$ and $b\O_{\K}=\partial_2\partial_3$, also let
$\partial_0=c\O_{\K}$ and $\partial_3=d\O_{\K}$, then we have
\[
T(\alpha\O_{\K})=\frac{1}{2^{\omega_{\K}(\alpha\O_{\K})}}\sum_{\partial_0\partial_1\partial_2\partial_3=\alpha\O_{\K}}\left(\frac{\partial_2}{\partial_0}\right)\left(\frac{\partial_3}{\partial_0}\right)\left(\frac{\partial_0}{\partial_3}\right)\left(\frac{\partial_1}{\partial_3}\right).
\]
Define
$\Phi_1:\mathbb{F}_2^{2}\times\mathbb{F}_2^{2}\rightarrow\mathbb{F}_2$, by
$
\Phi_1(\bar{u},\bar{v})=(u_1+v_1)(u_1+v_2),
$
where $\bar{u}=(u_1,u_2)$ and $\bar{v}=(v_1,v_2)$.
Note that $\Phi_1(\bar{u},\bar{v})=1$ if and only if
$$(\bar{u},\bar{v}) \in
\{((1,0),(0,0)), ((0,1),(1,1)), ((1,1),(0,0)), ((0,0),(1,1))\}.$$
If we write $\partial_0 =\partial_{00}, \partial_1= \partial_{01}, \partial_2 =\partial_{10}$ and $\partial_3 =\partial_{11}$.
Then it follows that
\[
T(\alpha\O_{\K})=\frac{1}{2^{\omega_{\K}(\alpha\O_{\K})}}\sum_{\alpha\O_{\K}=\partial_{00}\partial_{01}\partial_{10}\partial_{11}}\prod_{\bar{u},\bar{v}\in \mathbb{F}_2^{2}}\left(\frac{\partial_{\bar{u}}}{\partial_{\bar{v}}}\right)^{\Phi_1(\bar{u},\bar{v})}
\]
We interpret the elements $0,1 \in \F_2$ as $0,1 \in \mathbb{N}$ with the convention that $0^0=1$.
Now we consider the $m$-th moment of $2^{{\rm rk}_4({\rm Cl}_{\K})}$, where $m\in \mathbb{N}$
\begin{align}\label{factorisation}
& T_{m}(\alpha\O_{\K}):=\frac{1}{2^{m\omega_{\K}(\alpha\O_{\K})}} \times\\
&\sum_{\substack{\alpha\O_{\K}=\prod_{\bar{u}(1)} \partial_{{\bar{u}(1)}}^{(1)} \\ \vdots \\ \alpha\O_{\K}=\prod_{\bar{u}(m)}\partial_{\bar{u}(m)}^{(m)}}}\prod_{\bar{u}(1),\bar{v}(1)\in \mathbb{F}_2^{2}}\left(\frac{\partial_{{\bar{u}(1)}}^{(1)}}{\partial_{{\bar{v}(1)}}^{(1)}}\right)^{\Phi_1(\bar{u}(1),\bar{v}(1))}\cdots \prod_{\bar{u}(m),\bar{v}(m)\in \mathbb{F}_2^{2}}\left(\frac{\partial_{{\bar{u}(m)}}^{(m)}}{\partial_{{\bar{v}(m)}}^{(m)}}\right)^{\Phi_1(\bar{u}(m),\bar{v}(m))}\nonumber
\end{align}
Suppose that $\alpha \O_{\K}=\prod_{\bar{u}(1)\in \mathbb{F}_2^{2}}\partial_{{\bar{u}(1)}}^{(1)}=\cdots=\prod_{\bar{u}(m)\in \mathbb{F}_2^{2}}\partial_{{\bar{u}(m)}}^{(m)}$
and we define
\[
\partial_{\bar{u}(1),\ldots,\bar{u}(m)}=\gcd \left(\partial_{\bar{u}(1)}^{(1)},\ldots,{\partial_{\bar{u}(m)}^{(m)}}\right)
\]
and
\[
m_{\bar{u}(\ell)}=\prod_{\bar{u}(1)\in\mathbb{F}_2^{2}}\cdots\prod_{\bar{u}(\ell-1)\in\mathbb{F}_2^{2}}\prod_{\bar{u}(\ell+1)\in\mathbb{F}_2^{2}}\cdots\prod_{\bar{u}(m)\in\mathbb{F}_2^{2}}\partial_{\bar{u}(1),\ldots,\bar{u}(m)}.
\]

We claim that $m_{\bar{u}(\ell)}=\partial_{\bar{u}(\ell)}^{(\ell)}$.
Recall that $\alpha\O_{\K}=\prod_{\bar{u}(\ell)\in \mathbb{F}_2^2}\partial_{\bar{u}(\ell)}^{(\ell)}$,
which implies that $\partial_{\bar{u}(\ell)}^{(\ell)}$ is square-free.
If $\p|\partial_{\bar{u}(\ell)}^{(\ell)}$, then it divides $\alpha\O_{\K}$ and this implies that $\p|\partial_{\bar{u}(i)}^{(i)}$, for some $\bar{u}(i)\in \mathbb{F}_2^{2}$, for all $i$.
Hence, $\p|\partial_{\bar{u}(1),\ldots,\bar{u}(\ell),\ldots,\bar{u}(m)}$, for such $\bar{u}(i)$, where $i\neq \ell$ which implies $\p|m_{\bar{u}(\ell)}$, thus $\partial_{\bar{u}(\ell)}^{(\ell)}|m_{\bar{u}(\ell)}$.
Conversely, if $\p|m_{\bar{u}(\ell)}$, then
\[
\p|\gcd\left(\partial_{\bar{u}(1)}^{(1)},\ldots,\partial_{\bar{u}(\ell)}^{(\ell)},\ldots,\partial_{\bar{u}(m)}^{(m)}\right) \text{ for some indices } \bar{u}(i) \in \F_{2}^2 ~\forall~ i \neq \ell \Rightarrow\, \p|\partial_{\bar{u}(\ell)}^{(\ell)}.
\]
Moreover, $m_{\bar{u}(\ell)}$ is square-free. This proves our claim. Therefore we can write $\alpha\O_{\K}$ as
\[
\alpha\O_{\K}=\prod_{\bar{u}(\ell)\in \mathbb{F}_2^2}\partial_{\bar{u}(\ell)}^{(\ell)} =\prod_{\bar{u}(1)\in\mathbb{F}_2^{2}}\cdots\prod_{\bar{u}(\ell)\in\mathbb{F}_2^{2}}\cdots\prod_{\bar{u}(m)\in\mathbb{F}_2^{2}}\partial_{\bar{u}(1),\ldots,\bar{u}(m)}.
\]
Therefore, by replacing $\partial_{\bar{u}(\ell)}^{(\ell)}$ by $m_{\bar{u}(\ell)}$ in \eqref{factorisation}, we get
\[
T_{m}(\alpha\O_{\K})=\frac{1}{2^{m\omega_{\K}(\alpha\O_{\K})}}\sum_{\substack{\alpha\O_{\K}=\prod \partial_{\bar{u}(1),\ldots, \bar{u}(m)} \\ \bar{u}(1),\ldots, \bar{u}(m)\in \mathbb{F}_2^{2}}}
\prod_{\substack{\bar{u}(1),\ldots,\bar{u}(m)\in \mathbb{F}_2^{2} \\ \bar{v}(1),\ldots, \bar{v}(m)\in \mathbb{F}_2^{2}}}\bigg(\frac{\partial_{{\bar{u}(1),\ldots, \bar{u}(m)}}}{\partial_{\bar{v}(1),\ldots, \bar{v}(m)}}\bigg)^{\sum_{i=1}^{m} \Phi_{1}(\bar{u}(i), \bar{v}(i))}.
\]
Therefore, we have
\begin{align*}
\sum_{\L \in \mathcal{F}(X)} 2^{m(rk_{4}(\rm Cl_{\L}))}
\geq& \sum_{\alpha\O_{\K} \in \W(X)}  \frac{1}{2^{m(r_\K+2)}} T_{m}(\alpha\O_{\K})  ~\geq~  \frac{1}{2^{m(r_\K+2)}} N.
\end{align*}
where
\begin{equation}
N= \sum_{ \alpha\O_{\K} \in \W(X)} T_{m}(\alpha\O_{\K}).
\end{equation}
Now, we need to estimate $N$. Let $\bar{u}, \bar{v} \in \mathbb{F}_{2}^{2m}$ with $\bar{u}=(\bar{u}(1), \dots, \bar{u}(m))$ and $\bar{v}=(\bar{v}(1), \dots, \bar{v}(m))$. We define
\begin{equation}\label{phi-m defn}
\Phi_{m}(\bar{u}, \bar{v})= \sum_{i=1}^{m} \Phi_{1}(\bar{u}(i), \bar{v}(i)).
\end{equation}
Therefore we have proved the following theorem.
\begin{thm}\label{simpl notn thm}
We have
$$N= \sum_{(\partial_{\bar{u}})_{\bar{u} \in \F_{2}^{2m}} \in \mathcal{D}(X,m)} \frac{1}{2^{m \omega_{\K}(\prod_{\bar{u} \in \F_{2}^{2m}}\partial_{\bar{u}})}} \prod_{\bar{u}, \bar{v}} \bigg(\frac{\partial_{\bar{u}}}{\partial_{\bar{v}}}\bigg)^{\Phi_{m}(\bar{u}, \bar{v})},$$
where $\mathcal{D}(X,m)$ is the set of $4^m$ -tuples of squarefree and coprime ideals $\partial_{\bar{u}}$ such that
\begin{enumerate}
\item the index $
\bar{u}=(\bar{u}(1), \dots, \bar{u}(m)) \in \mathbb{F}_{2}^{2m}$ and
\item  $ \prod_{\bar{u} \in \mathbb{F}_{2}^{2m}} \partial_{\bar{u}} \in \W(X)$.
\end{enumerate}
\end{thm}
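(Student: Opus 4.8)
The plan is to assemble the stated identity directly from the rewriting of $T_m(\alpha\O_{\K})$ carried out above, by interchanging the outer sum over $\alpha\O_{\K}\in\W(X)$ with the inner sum over factorisations of $\alpha\O_{\K}$. First I would record that, by the displayed lower bound immediately preceding the theorem, $N=\sum_{\alpha\O_{\K}\in\W(X)}T_m(\alpha\O_{\K})$, and that after the substitution $m_{\bar{u}(\ell)}=\partial_{\bar{u}(\ell)}^{(\ell)}$ we have established
\[
T_{m}(\alpha\O_{\K})=\frac{1}{2^{m\omega_{\K}(\alpha\O_{\K})}}\sum_{\alpha\O_{\K}=\prod_{\bar{u}\in\F_2^{2m}}\partial_{\bar{u}}}\ \prod_{\bar{u},\bar{v}\in\F_2^{2m}}\bigg(\frac{\partial_{\bar{u}}}{\partial_{\bar{v}}}\bigg)^{\sum_{i=1}^{m}\Phi_{1}(\bar{u}(i),\bar{v}(i))},
\]
where I have written $\bar{u}=(\bar{u}(1),\ldots,\bar{u}(m))\in\F_2^{2m}$ and $\partial_{\bar{u}}:=\partial_{\bar{u}(1),\ldots,\bar{u}(m)}$. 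By the definition \eqref{phi-m defn}, the exponent $\sum_{i=1}^{m}\Phi_{1}(\bar{u}(i),\bar{v}(i))$ equals $\Phi_{m}(\bar{u},\bar{v})$, so each summand already has the shape appearing on the right-hand side of the theorem.

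Next I would identify the set over which the resulting double sum runs. Substituting the formula for $T_m$ into $N$ yields a sum over pairs consisting of an ideal $\alpha\O_{\K}\in\W(X)$ together with a factorisation $\alpha\O_{\K}=\prod_{\bar{u}\in\F_2^{2m}}\partial_{\bar{u}}$, and I claim this set of pairs is in bijection with $\mathcal{D}(X,m)$. Given such a pair, the product $\prod_{\bar{u}}\partial_{\bar{u}}=\alpha\O_{\K}$ is squarefree, which forces every $\partial_{\bar{u}}$ to be squarefree and forces the family $(\partial_{\bar{u}})$ to be pairwise coprime, since a prime ideal dividing two distinct members would divide $\alpha\O_{\K}$ to the second power; hence $(\partial_{\bar{u}})\in\mathcal{D}(X,m)$. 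Conversely, any $(\partial_{\bar{u}})\in\mathcal{D}(X,m)$ recovers the pair uniquely via $\alpha\O_{\K}:=\prod_{\bar{u}}\partial_{\bar{u}}$, which lies in $\W(X)$ by the second defining property of $\mathcal{D}(X,m)$. Since $\omega_{\K}(\alpha\O_{\K})=\omega_{\K}\big(\prod_{\bar{u}}\partial_{\bar{u}}\big)$, the normalising factor $2^{-m\omega_{\K}(\cdots)}$ is unaffected by this reindexing, and the identity of the theorem follows.

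The only point requiring any care—and it is a mild one—is the bijection, used silently in the formula for $T_m$ above, between $m$-tuples of factorisations $\big(\alpha\O_{\K}=\prod_{\bar{u}(j)\in\F_2^2}\partial_{\bar{u}(j)}^{(j)}\big)_{j=1}^{m}$, which is what the sum defining $T_m(\alpha\O_{\K})$ literally ranges over, and single factorisations of $\alpha\O_{\K}$ into $4^m$ coprime squarefree ideals indexed by $\F_2^{2m}$. This has essentially been dispatched already: the maps $\partial_{\bar{u}}=\gcd\big(\partial_{\bar{u}(1)}^{(1)},\ldots,\partial_{\bar{u}(m)}^{(m)}\big)$ and $\partial_{\bar{u}(\ell)}^{(\ell)}=m_{\bar{u}(\ell)}$ are mutually inverse on squarefree ideals, as was verified when proving the claim $m_{\bar{u}(\ell)}=\partial_{\bar{u}(\ell)}^{(\ell)}$. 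Thus no new estimate is needed; the proof is a matter of relabelling indices and of observing that squarefreeness of $\alpha\O_{\K}$ is precisely the coprimality-and-squarefreeness condition defining $\mathcal{D}(X,m)$.
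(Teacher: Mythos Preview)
Your proposal is correct and follows exactly the paper's approach: the paper proves this theorem by the manipulations immediately preceding its statement (introducing $\partial_{\bar{u}(1),\ldots,\bar{u}(m)}=\gcd(\partial_{\bar{u}(1)}^{(1)},\ldots,\partial_{\bar{u}(m)}^{(m)})$, verifying $m_{\bar{u}(\ell)}=\partial_{\bar{u}(\ell)}^{(\ell)}$, substituting into \eqref{factorisation}, and defining $\Phi_m$), and your write-up is simply a tidy recapitulation of that argument together with the obvious interchange of the sums over $\alpha\O_{\K}\in\W(X)$ and over factorisations. One small quibble: $N$ is \emph{defined} as $\sum_{\alpha\O_{\K}\in\W(X)}T_m(\alpha\O_{\K})$ in the display preceding the theorem rather than deduced from a lower bound, so your opening reference could be phrased as ``by definition'' rather than ``by the displayed lower bound.''
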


\subsection{Eliminating indices corresponding to a large number of prime divisors}\label{largenumofdiv}
For any ideal $\mathfrak{a} \subseteq \O_{\K}$,
$\tau_{\K, m}(\mathfrak{a})$ denotes the number of ordered ways of writing $\mathfrak{a}$ as a product of $m$ ideals. Let
$$
\sideset{}{_1}\sum = \sum_{(\partial_{\bar{u}})_{\bar{u} \in \F_{2}^{2m}} \in \mathcal{D}(X,m) \atop \omega_{\K}(\prod_{\bar{u} \in \F_{2}^{2m}}\partial_{\bar{u}}) > \Omega } \frac{1}{2^{m \omega_{\K}(\prod_{\bar{u} \in \F_{2}^{2m}}\partial_{\bar{u}})}} \prod_{\bar{u}, \bar{v}} \bigg(\frac{\partial_{\bar{u}}}{\partial_{\bar{v}}}\bigg)^{\Phi_{m}(\bar{u}, \bar{v})}.
$$
 Here, the parameter $\Omega$ will be chosen later. Since $\tau_{\K, 4^m}(\mathfrak{a})=2^{2m\omega_{\K}(\mathfrak{a})}$ for any squarefree ideal $\mathfrak{a}$, we have
\begin{align*}
\sideset{}{_1}\sum \ll \sum_{\substack{\mathfrak{N}(\mathfrak{a}) \leq X \\ \omega_{\K}(\mathfrak{a}) > \Omega}} \mu^{2}(\mathfrak{a}) \frac{\tau_{\K, 4^m}(\mathfrak{a})}{2^{m\omega_{\K}(\mathfrak{a})}} \ll \sum_{\substack{\mathfrak{N}(\mathfrak{a}) \leq X \\ \omega_{\K}(\mathfrak{a}) > \Omega}} \mu^{2}(\mathfrak{a}) 2^{m\omega_{\K}(\mathfrak{a})}.
\end{align*}
By using Lemma \ref{l prime fact est} and Stirling's formula, we write
\begin{align*}
\sideset{}{_1}\sum \ll& \sum_{v > \Omega} 2^{mv} \sum_{\substack{\mathfrak{N}(\mathfrak{a}) \leq X \\ \omega_{\K}(\mathfrak{a})=v}} \mu^{2}(\mathfrak{a})
\ll_{\K} \sum_{v > \Omega} 2^{mv} \frac{X}{\log X} \frac{(\log \log X+B_{0})^{v-1}}{(v-1)!}\\
\ll_{\K,m}& \sum_{v \geq \Omega} 2^{mv} \frac{X}{\log X} \frac{(\log \log X+B_{1})^{v}}{v!} \ll_{\K,m} \sum_{v \geq \Omega}\frac{(2^{m}(\log \log X+B_{1}))^{v}}{(v/e)^{v}} \cdot \frac{X}{\log X}.
\end{align*}
By choosing $\Omega= e 4^{m}(\log\log X+B_1)$, we see that the sum
$$\sum_{v \geq \Omega}\frac{(2^{m}(\log \log X+B_{1}))^{v}}{(v/e)^{v}} \leq \sum_{v \geq \Omega} \frac{1}{2^{mv}}= {\rm O}(1),$$
for any $m \geq 1$ and hence we have
\begin{equation}\label{larg factors est}
\sideset{}{_1}\sum \ll_{\K,m} \frac{X}{\log X}.
\end{equation}
\subsection{Dissection of the range of the variables $\partial_{\bar{u}}$ into sub-intervals}\label{fam4}
Let $\Delta= 1+ \log ^{-2^{m}}X$ and $A_{\bar{u}}=\Delta^{r}$, for some positive integer $r$, for all $\bar{u} \in \mathbb{F}_{2}^{2m}$. For $A=(A_{\bar{u}})_{\bar{u} \in \mathbb{F}_{2}^{2m}}$, we define
$$T(X, m, A)= \sum_{(\partial_{\bar{u}})}\bigg(\prod_{\bar{u}}2^{-m\omega_{\K}(\partial_{\bar{u}})} \bigg) \prod_{\bar{u}, \bar{v}} \bigg(\frac{\partial_{\bar{u}}}{\partial_{\bar{v}}}\bigg)^{\Phi_{m}(\bar{u}, \bar{v})},$$
where the sum is over $\partial_{\bar{u}} \in \mathcal{D}(X,m)$ such that $A_{\bar{u}} \leq \N(\partial_{\bar{u}}) \leq \Delta A_{\bar{u}}$, $\omega_{\K}( \prod_{\bar{u} \in \mathbb{F}_2^{2m}}\partial_{\bar{u}}) \leq \Omega$ for all $\bar{u} \in \mathbb{F}_{2}^{2m}$.
Here $\mathcal{D}(X,m)$ is defined as in Theorem \ref{simpl notn thm}. By using \eqref{larg factors est}, we write
\begin{equation}\label{range subdivid eq}
N = \sum_{\alpha\O_{\K}\in \W(X)} T_{m}(\alpha\O_{\K})= \sum_{A}T(X, m, A) + {\rm O}\bigg(\frac{X}{\log X}\bigg),
\end{equation}
where $A$ is such that $\prod_{\bar{u} \in \mathbb{F}_{2}^{2m}} A_{\bar{u}} \leq X$.
We will now consider 4 families of tuples $(A_{\bar{u}})_{\bar{u} \in \mathbb{F}_{2}^{2m}}$ and show that their contribution is negligible.

\noindent
\textbf{First family: } The first family is defined by:
$(A_{\bar{u}})$ such that $\prod_{\bar{u} \in \mathbb{F}_{2}^{2m}} A_{\bar{u}} \geq \Delta^{-4^{m}} X$. We have
\begin{align*}
\sum_{\substack{A \\ \prod_{\bar{u} \in \mathbb{F}_{2}^{2m}} A_{\bar{u}} \geq \Delta^{-4^{m}} X}} |T(X, m, A)| \ll \sum_{\Delta^{-4^{m}} X \leq \N(\mathfrak{a}) \leq X \atop \a \in \W}  \mu^{2}(\mathfrak{a})2^{m \omega_{\K}(\mathfrak{a})}.
\end{align*}
Using Lemma \ref{omega asymp in short int},
$\displaystyle{\sum_{\Delta^{-4^{m}} X \leq \N(\mathfrak{a}) \leq X \atop \a \in \W} \mu^{2}(\mathfrak{a})2^{m \omega_{\K}(\mathfrak{a})}
\ll_{\K,\mathfrak{f},m} X(1-\Delta^{-4^{m}}) (\log X)^{2^{m}-1}}$.

\noindent
We note that
$\Delta^{-4^{m}}=(1+\log ^{-2^{m}}X)^{-4^{m}}= 1-4^{m} \log^{-2^{m}}X+{\rm O}_m(\log^{-2^{m+1}}X)$; hence
\begin{align*}
\sum_{\substack{A \\ \prod_{\bar{u} \in \mathbb{F}_{2}^{2m}} A_{\bar{u}} \geq \Delta^{-4^{m}} X}} |T(X, m, A)| \ll_{\K, m} \frac{X}{\log X}.
\end{align*}
\begin{rmk}
Note that if $\prod_{\bar{u} \in \mathbb{F}_{2}^{2m}} A_{\bar{u}} \leq \Delta^{-4^{m}} X$ then $\N\big(\prod_{\bar{u} \in \mathbb{F}_{2}^{2m}} \partial_{\bar{u}}\big) \leq \Delta^{4^{m}} \prod_{\bar{u} \in \mathbb{F}_{2}^{2m}} A_{\bar{u}} \leq X$.
\end{rmk}
To introduce the other families, we define
\begin{align}\label{dag defn}
&X^{\dagger}= (\log X)^{\max( 20, 10(n_{\K}+1))(2+4^{m}(1+2^{m}))},~~~ \eta(m)= 2^{-m}\beta\\
&X^{\ddagger} \text{ is the least } \Delta^{\ell} \text{ such that } \Delta^{\ell} \geq \exp(\log ^{\eta(m)}X),\label{ddag defn}
\end{align}
where $\beta > 0$ is sufficiently small.

\vspace{1mm}
\noindent
\textbf{Second family: } This consists of $(A_{\bar{u}})$ such that $\prod_{\bar{u} \in \mathbb{F}_{2}^{2m}} A_{\bar{u}} < \Delta^{-4^{m}} X$ and
\begin{equation}\label{2nd family}
\text{ At most } 2^{m}-1 \text{ of the } A_{\bar{u}} \text{ in } A=(A_{\bar{u}})_{\bar{u} \in \mathbb{F}_{2}^{2m}} \text{ are greater than } X^{\ddagger}.
\end{equation}
Then
\begin{align*}
\sum_{A \text{ satisfies \eqref{2nd family} }} |T(X, m, A)| \leq \sum_{0 \leq r \leq 2^{m}-1} &\sum_{\N(\tilde{\partial}_{1}) \leq (X^{\ddagger})^{4^{m}-r}} \mu^{2}(\tilde{\partial}_{1}) \tau_{\K, 4^{m}-r}(\tilde{\partial}_{1})2^{-m\omega_{\K}(\tilde{\partial}_{1})}\\
&\times \sum_{\N(\tilde{\partial}_{2}) \leq \frac{X}{\N(\tilde{\partial}_{1})} \atop \tilde{\partial}_{2} \in \W} \mu^{2}(\tilde{\partial}_{2}) \tau_{\K, r}(\tilde{\partial}_{2})2^{-m\omega_{\K}(\tilde{\partial}_{2})}.
\end{align*}
By using Lemma \ref{omega asymp in short int}, we bound the inner sum by
\begin{align*}
\sum_{\N(\tilde{\partial}_{2}) \leq \frac{X}{\N(\tilde{\partial}_{1})}\atop \tilde{\partial}_{2} \in \W} \mu^{2}(\tilde{\partial}_{2}) \tau_{\K, r}(\tilde{\partial}_{2})2^{-m\omega_{\K}(\tilde{\partial}_{2})}
 \ll& \sum_{\N(\tilde{\partial}_{2}) \leq \frac{X}{\N(\tilde{\partial}_{1})} \atop \tilde{\partial}_{2} \in \W} \tau_{\K,2}(\tilde{\partial}_{2})^{-m+\log_{2}r}\\
\ll_{\K,m}& \frac{X}{\N(\tilde{\partial}_{1})} (\log X)^{\frac{r2^{-m}}{{|H_{\mathfrak{f}}(\K)|}}-1}.
\end{align*}
Thus
\begin{align*}
\sum_{A \text{ satisfies \eqref{2nd family} }} |T(X, m, A)| \ll_{\K,m}& X \sum_{0 \leq r \leq 2^{m}-1} (\log X)^{\frac{r2^{-m}}{{|H_{\mathfrak{f}}(\K)|}}-1} \sum_{\N(\tilde{\partial}_{1})
\ll_{\K,m} (X^{\ddagger})^{4^{m}-r}} \mu^{2}(\tilde{\partial}_{1}) \frac{2^{m\omega_{\K}(\tilde{\partial}_{1})}}{\N(\tilde{\partial_{1}})}\\
\ll_{\K,m}& X \sum_{0 \leq r \leq 2^{m}-1} (\log X)^{ \frac{r2^{-m}}{{|H_{\mathfrak{f}}(\K)|}}-1} \prod_{\substack{\mathfrak{p} \\ \N(\mathfrak{p}) \leq (X^{\ddagger})^{4^{m}-r}}} \bigg(1+\frac{2^{m}}{\N(\mathfrak{p})}\bigg)\\
\ll_{\K,m} &  X \sum_{0 \leq r \leq 2^{m}-1} (\log X)^{\frac{r2^{-m}}{{|H_{\mathfrak{f}}(\K)|}}-1} \prod_{\substack{\mathfrak{p} \\ \N(\mathfrak{p}) \leq (X^{\ddagger})^{4^{m}-r}}} \bigg(1-\frac{1}{\N(\mathfrak{p})}\bigg)^{-2^{m}}.
\end{align*}
By using Mertens's theorem for number fields (Theorem 1, \cite{GL22}), we have
\begin{align*}
\sum_{A \text{ satisfies \eqref{2nd family} }} |T(X, m, A)| \ll_{\K,m}& \frac{X}{\log X} \bigg(\frac{(\log X)^{\frac{2^{-m}}{{|H_{\mathfrak{f}}(\K)|}}\cdot2^{m}}-1}{(\log X)^{\frac{2^{-m}}{{|H_{\mathfrak{f}}(\K)|}}}-1}\bigg) (\log X^{\ddagger})^{2^{m}} \\
\ll_{\K,m}& X \log^{2^{m}\eta(m) - 1 + \frac{1-2^{-m}}{{|H_{\mathfrak{f}}(\K)|}}} X.
\end{align*}

\begin{defn}
The variables $\bar{u}$ and $\bar{v}$ are said to be linked if
$\Phi_{m}(\bar{u}, \bar{v})+\Phi_{m}(\bar{v}, \bar{u})=1$.
\end{defn}

\vspace{1mm}
\noindent
\textbf{Third family: } This consists of $(A_{\bar{u}})$ such that
\begin{enumerate}[(i)]
\item $\prod_{\bar{u}} A_{\bar{u}} < \Delta^{-4^{m}}X$ \label{3rd family i}
\item At least $2^{m}$ of the $A_{\bar{u}}$ in $A=(A_{\bar{u}})_{\bar{u} \in \mathbb{F}_{2}^{2m}}$ are greater than $X^{\ddagger}$. \label{3rd family ii}
\item There exists two linked indices $\bar{u}, \bar{v}$ such that $A_{\bar{u}}, A_{\bar{v}} \geq X^{\dagger}$.\label{3rd family iii}
\end{enumerate}
Without loss of generality, we assume $\Phi_m(\bar{u}, \bar{v})=1$.
\begin{align}\label{sums3}
|T(X, m, A)|
\ll &
\sum_{\substack{(\partial_{\bar{w}})_{\bar{w} \neq \bar{v}, \bar{u}} \\ A_{\bar{w}} \leq \N(\partial_{\bar{w}}) \leq \Delta A_{\bar{w}}}}  \prod_{\bar{w} \neq \bar{u}, \bar{v}}  2^{-m \omega_{\K}(\partial_{\bar{w}})}\\
&
\times
\bigg|\sum_{A_{\bar{u}} \leq \N(\partial_{\bar{u}}) \leq \Delta A_{\bar{u}}}\sum_{A_{\bar{v}} \leq \N(\partial_{\bar{v}}) \leq \Delta A_{\bar{v}} \atop \partial_{\bar{v}} \in \W}a(\partial_{\bar{u}},(\partial_{\bar{w}})_{\bar{w} \neq \bar{v}, \bar{u}}) a(\partial_{\bar{v}},(\partial_{\bar{w}})_{\bar{w} \neq \bar{v}, \bar{u}}) \bigg(\frac{\partial_{\bar{u}}}{\partial_{\bar{v}}}\bigg)\bigg|,\nonumber
\end{align}
where $\displaystyle{a(\partial_{\bar{u}}, (\partial_{\bar{w}})_{\bar{w} \neq \bar{v}, \bar{u}})= 2^{-m \omega_{\K}(\partial_{\bar{u}})}\prod_{\bar{w} \neq \bar{u}, \bar{v}} \bigg(\frac{\partial_{\bar{w}}}{\partial_{\bar{u}}}\bigg)^{\Phi_{m}(\bar{w}, \bar{u})} \prod_{\bar{w} \neq \bar{u}, \bar{v}} \bigg(\frac{\partial_{\bar{u}}}{\partial_{\bar{w}}}\bigg)^{\Phi_{m}(\bar{u}, \bar{w})}}$.

Analogously we have $a(\partial_{\bar{v}}, (\partial_{\bar{w}})_{\bar{w} \neq \bar{v}, \bar{u}})$.
We would now like to apply \lemref{Heilbronn est}.
For this we note that $\left( \frac{.}{\partial_{\bar{v}}} \right)$ is a primitive character modulo $\partial_{\bar{v}}$.
Each character appearing in \eqref{sums3} is primitive with a distinct conductor. Hence these characters are distinct. We now apply \lemref{Heilbronn est}, to obtain, for any integer $g >0$
\begin{align*}
|T(X, m, A)| \ll& \prod_{\bar{w} \neq \bar{u}, \bar{v}} A_{\bar{w}}\bigg( A_{\bar{v}} A_{\bar{u}}(A_{\bar{v}}^{\frac{-1}{4g}} \log A_{\bar{u}})+A_{\bar{v}}^{1-\frac{1}{2g}} A_{\bar{u}}^{1-\frac{1}{2(n_{\K}+1)}} \log A_{\bar{u}} \bigg(\sum_{\N(\a) \leq A_{\bar{v}}}\N(\a)^{\frac{2}{n_{\K}+1}}\bigg)^{\frac{1}{2g}}\bigg)\\
\ll& \prod_{\bar{w} \neq \bar{u}, \bar{v}} A_{\bar{w}}\bigg( A_{\bar{v}} A_{\bar{u}}(A_{\bar{v}}^{\frac{-1}{4g}} \log A_{\bar{u}})+A_{\bar{v}} A_{\bar{u}}^{1-\frac{1}{2(n_{\K}+1)}} A_{\bar{v}}^{\frac{1}{g(n_{\K}+1)}} \log A_{\bar{u}}\bigg)\\
\ll& X((X^{\dagger})^{\frac{-1}{4g}} \log X)+ X (A_{\bar{u}}^{\frac{-1}{2(n_{\K}+1)}} A_{\bar{v}}^{\frac{1}{g(n_{\K}+1)}})\log X .
\end{align*}
By choosing $g=5$, we see that if $A_{\bar{v}} \ll A_{\bar{u}}^{2}$ holds, then
$$|T(X, m, A)| \ll X (X^{\dagger})^{- \min \left( \frac{1}{10(n_{\K}+1)}, \frac{1}{20} \right)} \log X.
$$

We now tackle the case $A_{\bar{u}}^{2} \ll A_{\bar{v}}$. Using the Cauchy Schwarz inequality, we may bound the sum inside the absolute value in \eqref{sums3} by
$$ \ll A_{\bar{u}}^{\frac{1}{2}} \bigg(\sum_{\partial_{\bar{u}}}\bigg|\sum_{\partial_{\bar{v}}} a(\partial_{\bar{v}},(\partial_{\bar{w}})_{\bar{w} \neq \bar{v}, \bar{u}}) \bigg(\frac{\partial_{\bar{u}}}{\partial_{\bar{v}}}\bigg)\bigg|^{2}\bigg)^{\frac{1}{2}}.$$
If $\partial_{\bar{u}} = s\O_{\K}$, then by definition of $\left(\frac{.}{\partial_{\bar{v}}}\right)$ for $\partial_{\bar{v}}  \in \W(X)$,
$
 \bigg(\frac{\partial_{\bar{u}}}{\partial_{\bar{v}}}\bigg) = \bigg(\frac{s\varepsilon_s}{\partial_{\bar{v}}}\bigg).
 $
 Here $\varepsilon_s$ is as defined before \lemref{primitivity rmk}.
  By Lemma \ref{primitivity rmk}, $\left(\frac{s\epsilon_s}{.}\right)$
 is a primitive character modulo $4\partial_{\bar{u}}$.
Now, by Lemma \ref{large sieve lem}, we obtain (when $A_{\bar{u}}^{2} \ll A_{\bar{v}}$)
\begin{align*}
|T(X, m, A)| \ll& \prod_{\bar{w} \neq \bar{u}, \bar{v}} A_{\bar{w}} ( A_{\bar{u}}^{\frac{1}{2}} A_{\bar{v}}^{\frac{1}{2}}(A_{\bar{u}}^{2}+A_{\bar{v}})^{\frac{1}{2}}) \ll X A_{\bar{u}}^{\frac{-1}{2}} \ll X (X^{\dagger})^{\frac{-1}{2}}.
\end{align*}
By summing over all $O((\log X)^{4^{m}(1+2^{m})})$ possible $A$ and using the
definition of $X^{\dagger}$ we get
$$\sum_{A \text{ satisfies \eqref{3rd family i}, \eqref{3rd family ii}, \eqref{3rd family iii}}} |T(X, m, A)| \ll \frac{X}{\log X}.$$

\vspace{1mm}
\noindent
\textbf{Fourth family: }This consists of $(A_{\bar{u}})$ such that
\begin{enumerate}[(i)]
\item $\prod_{\bar{u} \in \mathbb{F}_{2}^{2m}} A_{\bar{u}} < \Delta^{-4^{m}}X$ \label{4th family i}
\item There exists two linked indices $\bar{u}, \bar{v}$ such that $2 \leq A_{\bar{v}} < X^{\dagger}$ and $A_{\bar{u}}  \geq X^{\ddagger}$\label{4th family ii}
\item $\omega_{\K}(\prod_{\bar{u} \in \mathbb{F}_{2}^{2m}} \partial_{\bar{u}}) \leq \Omega$.
\item Two indices $\bar{u}, \bar{v}$ with $A_{\bar{u}}, A_{\bar{v}} > X^{\dagger}$ are always unlinked.
\item At least $2^{m}$ of the $A_{\bar{u}}$ in $A=(A_{\bar{u}})_{\bar{u} \in \mathbb{F}_{2}^{2m}}$ are greater than $X^{\ddagger}$.
\end{enumerate}
By assumption \eqref{4th family ii}, there exists an index $\bar{v}$ which is linked to $\bar{u}$.
We observe that there may be more than one index linked to $\bar{u}$. Recall that
$$T(X, m, A)= \sum_{(\partial_{\bar{u}})}\bigg(\prod_{\bar{u}}2^{-m\omega_{\K}(\partial_{\bar{u}})} \bigg) \prod_{\bar{u}, \bar{v}} \bigg(\frac{\partial_{\bar{u}}}{\partial_{\bar{v}}}\bigg)^{\Phi_{m}(\bar{u}, \bar{v})}.$$
Let $I_1 \subset \mathbb{F}_2^{2m}$ be the set of all indices linked with $\bar{u}$
and let $I_2 \subset \mathbb{F}_2^{2m}$ be the set of all indices unlinked with $\bar{u}$.
We now divide the above sum into three sums as follows
$$T(X, m, A)= \sum_{(\partial_{\bar{w}})_{\bar{w} \in I_1}}\sum_{(\partial_{\bar{w}_1})_{\bar{w}_1 \in I_2}} \sum_{\partial_{\bar{u}}} \bigg(\prod_{\bar{u}_1 \in \mathbb{F}_2^{2m}}2^{-m\omega_{\K}(\partial_{\bar{u}_1})} \bigg) \prod_{\bar{u}_2, \bar{v} \in \mathbb{F}_2^{2m}} \bigg(\frac{\partial_{\bar{u}_2}}{\partial_{\bar{v}}}\bigg)^{\Phi_{m}(\bar{u}_2, \bar{v})}.$$
Taking absolute values, we get
$$|T(X, m, A)|\le \sum_{(\partial_{\bar{w}})_{\bar{w} \in I_1}}\sum_{(\partial_{\bar{w}_1})_{\bar{w}_1 \in I_2}} \bigg|\sum_{\partial_{\bar{u}}} \bigg(\prod_{\bar{u}_1 \in \mathbb{F}_2^{2m}}2^{-m\omega_{\K}(\partial_{\bar{u}_1})} \bigg) \prod_{\bar{u}_2, \bar{v} \in \mathbb{F}_2^{2m}} \bigg(\frac{\partial_{\bar{u}_2}}{\partial_{\bar{v}}}\bigg)^{\Phi_{m}(\bar{u}_2, \bar{v})}\bigg|.$$
Any term given by $ \bigg(\frac{\partial_{\bar{u}_2}}{\partial_{\bar{v}}}\bigg)$ with $\bar{u}_2, \bar{v} \neq \bar{u}$, can be pulled out of the sum over $\partial_{\bar{u}}$ and can be bounded by $1$.
If we have a term given by $ \bigg(\frac{\partial_{\bar{u}}}{\partial_{\bar{v}}}\bigg)$ with $\bar{v} \in I_2$, then the term $ \bigg(\frac{\partial_{\bar{v}}}{\partial_{\bar{u}}}\bigg)$
will also appear in the product and by \lemref{quadreciprocity}, they can be multiplied to give $1$. Finally we are left with terms given by $ \bigg(\frac{\partial_{\bar{u}}}{\partial_{\bar{v}}}\bigg)$ or $ \bigg(\frac{\partial_{\bar{v}}}{\partial_{\bar{u}}}\bigg)$ with $\bar{v} \in I_1$.
By \lemref{quadreciprocity} and multiplicativity of these characters, we get
\begin{align*}
|T(X,m,A)| \leq&\sum_{(\partial_{\bar{w}})_{\bar{w} \in I_1}}\sum_{(\partial_{\bar{w}_1})_{\bar{w}_1 \in I_2}}\bigg|\sum_{\partial_{\bar{u}}}\frac{\mu^{2}(\prod \partial_{\bar{w}})}{2^{m\omega_{\K(\partial_{\bar{u}})}}} \bigg(\frac{\partial_{\bar{u}}}{\prod_{\bar{w}_1 \in I_2}\partial_{\bar{w}_1}}\bigg) \bigg|\\
\leq& \sum_{(\partial_{\bar{w}})_{\bar{w} \in I_1}}\sum_{(\partial_{\bar{w}_1})_{\bar{w}_1 \in I_2}} \sum_{0 \leq \ell \leq \Omega} \frac{1}{2^{m\ell}}\bigg|\sum_{\substack{\partial_{\bar{u}} \\ \omega(\partial_{\bar{u}})=\ell}} \mu^{2}(\prod \partial_{\bar{w}}) \bigg(\frac{\partial_{\bar{u}}}{\prod_{\bar{w}_1 \in I_2}\partial_{\bar{w}_1}}\bigg) \bigg|.
\end{align*}
Writing $\partial_{\bar{u}}= \p_{1} \cdots \p_{\ell}$ in ascending order of absolute norm, the inner sum is bounded by
\begin{equation}\label{sumpl}
\bigg|\sum_{\substack{\partial_{\bar{u}} \\ \omega(\partial_{\bar{u}})=\ell}} \mu^{2}(\prod \partial_{\bar{w}}) \bigg(\frac{\partial_{\bar{u}}}{\prod_{\bar{w}_1 \in I_2}\partial_{\bar{w}_1}}\bigg) \bigg| \leq \sum_{\p_{1}} \sum_{\p_{2}} \dots \bigg|\sum_{\p_{\ell}} \mu^{2}(\p_{1} \dots \p_{\ell}\prod_{\bar{w}\neq \bar{u}} \partial_{\bar{w}}) \bigg(\frac{\p_{\ell}}{\prod_{\bar{w}_1 \in I_2}\partial_{\bar{w}_1}}\bigg)\bigg|,
\end{equation}
where $\bigg(\frac{\cdot}{\prod_{\bar{w}_1 \in I_2}\partial_{\bar{w}_1}}\bigg)$ is non-trivial generalised Dirichlet character.
We note here that
\begin{equation}\label{pbound}
A_{\bar{u}}^{1/\ell} \le \N(\p_{\ell}) \le \frac{\Delta A_{\bar{u}}}{\N(\p_1\cdots \p_{\ell-1})}.
\end{equation}
 If $\p_{\ell}\mid  \prod_{\bar{w}\neq \bar{u}} \partial_{\bar{w}}$ then the corresponding term in \eqref{sumpl} is zero and number of such $\p_{\ell}$ is at most $\Omega$ by condition (iii) above. Therefore, we have
\begin{eqnarray*}
&&\sum_{\N(\p_{\ell}) \leq \frac{\Delta A_{\bar{u}}}{\N(\p_{1}, \dots, \p_{\ell-1})}} \mu^{2}(\p_{1} \dots \p_{\ell}\prod_{\bar{w}\neq \bar{u}} \partial_{\bar{w}}) \bigg(\frac{\p_{\ell}}{\prod_{\bar{w}_1 \in I_2}\partial_{\bar{w}_1}}\bigg) \\
&=&
\mu^{2}(\p_{1} \dots \p_{\ell-1}\prod_{\bar{w}\neq \bar{u}}\partial_{\bar{w}}) \sum_{\N(\p_{\ell}) \leq \frac{\Delta A_{\bar{u}}}{\N(\p_{1}, \dots, \p_{\ell-1})} \atop \gcd(\p_\ell, \prod_{\bar{w} \neq \bar{u}}\partial_{\bar{w}}) = \O_{\K} } \bigg(\frac{\p_{\ell}}{\prod_{\bar{w}_1 \in I_2}\partial_{\bar{w}_1}}\bigg)
\end{eqnarray*}
However
\begin{eqnarray*}
&&\mu^{2}(\p_{1} \dots \p_{\ell-1}\prod_{\bar{w}\neq \bar{u}}\partial_{\bar{w}}) \sum_{\N(\p_{\ell}) \leq \frac{\Delta A_{\bar{u}}}{\N(\p_{1}, \dots, \p_{\ell-1})} \atop \gcd(\p_\ell, \prod_{\bar{w} \neq \bar{u}}\partial_{\bar{w}}) = \O_{\K} } \bigg(\frac{\p_{\ell}}{\prod_{\bar{w}_1 \in I_2}\partial_{\bar{w}_1}}\bigg)\\
&=&
\mu^{2}(\p_{1} \dots \p_{\ell-1}\prod_{\bar{w}\neq \bar{u}}\partial_{\bar{w}}) \sum_{\N(\p_{\ell}) \leq \frac{\Delta A_{\bar{u}}}{\N(\p_{1}, \dots, \p_{\ell-1})} \atop \gcd(\p_\ell, \prod_{\bar{w}_1 \in I_2}\partial_{\bar{w}_1}) = \O_{\K} } \bigg(\frac{\p_{\ell}}{\prod_{\bar{w}_1 \in I_2}\partial_{\bar{w}_1}}\bigg) + \rO(\Omega).
\end{eqnarray*}
Applying Lemma \ref{sig-wal lem}, we obtain for any $\beta_1 > 0$,
\begin{align*}
\sum_{\N(\p_{\ell}) \leq \frac{\Delta A_{\bar{u}}}{\N(\p_{1}, \dots, \p_{\ell-1})} \atop \gcd(\p_{\ell}, \prod_{\bar{w}_1 \in I_2}\partial_{\bar{w}_1}) = \O_{\K}} &\bigg(\frac{\p_{\ell}}{\prod_{\bar{w}_1 \in I_2}\partial_{\bar{w}_1}}\bigg) \ll\\
\left(\N \left(\prod_{\bar{w}_1 \in I_2}\partial_{\bar{w}_1}\right)\right)^{\beta_1}& \frac{\Delta A_{\bar{u}}}{\N(\p_{1} \dots \p_{\ell-1})} \log^{2}\frac{\Delta A_{\bar{u}}}{\N(\p_{1} \dots \p_{\ell-1})} \exp\bigg(-2c_{\K,\epsilon}\frac{\bigg(\log \frac{\Delta A_{\bar{u}}}{\N(\p_{1} \dots \p_{\ell-1})}\bigg)^{\frac12}}{(\N(\prod_{\bar{w}_1 \in I_2} \partial_{\bar{w}_1}))^{\beta_1}}\bigg)\\
\ll_{\beta_1}&
\frac{\N(\prod_{\bar{w}_1 \in I_2}\partial_{\bar{w}_1})^{\beta_1(1+B)}}{\log^{B-2}\frac{\Delta A_{\bar{u}}}{\N(\p_{1} \dots \p_{\ell-1})}} \cdot \frac{\Delta A_{\bar{u}}}{\N(\p_{1} \dots \p_{\ell-1})}.
\end{align*}
Choosing $\beta_1= \frac{1}{2(1+B)}$, we obtain
\begin{align*}
\sum_{\N(\p_{\ell}) \leq \frac{\Delta A_{\bar{u}}}{\N(\p_{1}, \dots, \p_{\ell-1}) }\atop \gcd(\p_{\ell}, \prod_{\bar{w}_1 \in I_2}\partial_{\bar{w}_1}) = \O_{\K}} \bigg(\frac{\p_{\ell}}{\prod_{\bar{w}_1 \in I_2}\partial_{\bar{w}_1}}\bigg) \ll_{B}& \frac{\N(\prod_{\bar{w}_1 \in I_2}\partial_{\bar{w}_1})^{1/2}}{\log^{B-2}\frac{\Delta A_{\bar{u}}}{\N(\p_{1} \dots \p_{\ell-1})}} \cdot \frac{\Delta A_{\bar{u}}}{\N(\p_{1} \dots \p_{\ell-1})}\\
\end{align*}
By \eqref{pbound} we have
$$\sum_{\N(\p_{\ell}) \leq \frac{\Delta A_{\bar{u}}}{\N(\p_{1}, \dots, \p_{\ell-1})}\atop \gcd(\p_{\ell}, \prod_{\bar{w}_1 \in I_2}\partial_{\bar{w}_1}) = \O_{\K}} \bigg(\frac{\p_{\ell}}{\prod_{\bar{w}_1 \in I_2}\partial_{\bar{w}_1}}\bigg) \ll_{B} \frac{\N(\prod_{\bar{w}_1 \in I_2}\partial_{\bar{w}_1})^{1/2}}{\log^{B-2}A_{\bar{u}}^{1/\ell}} \cdot \frac{\Delta A_{\bar{u}}}{\N(\p_{1} \dots \p_{\ell-1})}.$$
By the definition of the fourth family, we have $A_{\bar{u}} \geq X^{\ddagger} \geq \exp(\log^{\eta(m)}X)$. This implies that
\begin{align*}
\frac{\log^{B-2}A_{\bar{u}}}{\ell^{B-2}} \gg_{\K, B} \frac{\log^{(B-2)\eta(m)}X}{(\log\log X)^{B-2}} \gg_{\K, B} \log ^{\frac{(B-2) \eta(m)}{2}}X.
\end{align*}
Therefore, we have
\begin{equation*}
\sum_{\N(\p_{\ell}) \leq \frac{\Delta A_{\bar{u}}}{\N(\p_{1}, \dots, \p_{\ell-1})}\atop \gcd(\p_{\ell}, \prod_{\bar{w}_1 \in I_2}\partial_{\bar{w}_1}) = \O_{\K}} \bigg(\frac{\p_{l}}{\prod_{\bar{w}_1 \in I_2}\partial_{\bar{w}_1}}\bigg) \ll_{B}\Delta A_{\bar{u}} \frac{\N(\prod_{\bar{w}_1 \in I_2}\partial_{\bar{w}_1})^{1/2}}{\N(\p_{1} \dots \p_{l-1})} \log^{-\frac{(B-2)\eta(m)}{2}}X.
\end{equation*}
Also, we see that
$$\sum_{\ell \leq \Omega} \frac{1}{2^{m\ell}} \sum_{\N(\p_{1}\cdots \p_{\ell-1}) \leq \Delta A_{\bar{u}}} \frac{\mu^{2}(\p_{1} \dots \p_{\ell-1})}{\N(\p_{1}\cdots \p_{\ell-1})} \ll \log\Delta A_{\bar{u}} \ll \log X$$
and
$
\sum_{\N(\partial_{\bar{w}_1}) \le X^{\dagger}}   \N(\partial_{\bar{w}_1})^{\frac12} \ll (X^{\dagger})^{\frac32}.
$
Further
$$
 \sum_{(\partial_{\bar{w}})_{\bar{w} \in I_1}}\sum_{(\partial_{\bar{w}_1})_{\bar{w}_1 \in I_2}} \sum_{0 \leq \ell \leq \Omega} \frac{1}{2^{m\ell}} \sum_{\p_{1}} \dots \sum_{\p_{\ell -1}, \atop \N(\p_1\cdots \p_{\ell-1}) \le \Delta A_{\bar{u}}^{1 -1/\ell}} 1
 \ll
 X(X^{\ddagger})^{-\frac{1}{\Omega}}.
$$
Hence, for $B \gg 1$, we have
\begin{align*}
|T(X,m,A)| \ll_{B}&  \prod_{\bar{w}\in I_1} A_{\bar{w}}\cdot \prod_{\bar{w}_1 \in I_2} (X^{\dagger})^{\frac32} \cdot ((\log X)^{-B\eta(m)+1} A_{\bar{u}}) +\Omega X(X^{\ddagger})^{-\frac{1}{\Omega}}\\
\ll_{B}& X \bigg((\log X)^{-B\eta(m)+1} \cdot  (X^{\dagger})^{\frac{3\cdot 4^m}{2}}+ \frac{\log \log X}{(X^{\ddagger})^{1/\Omega} }\bigg) \ll_{B} \frac{X}{\log X}.
\end{align*}
Combining the estimates for all the four families and recalling $\eta(m) = 2^{-m}\beta$ we get
\begin{prop}\label{4family prop}
For every $m \geq 1$ and $\beta > 0$ sufficiently small, we have
\begin{equation}
\sum_{\alpha\O_{\K} \in \W(X)} T_{m}(\alpha\O_{\K}) = \psum_{A} T(X, m, A)+O\left(\frac{X}{(\log X)^{1- \frac{1}{{|H_{\mathfrak{f}}(\K)|}} }} (\log X )^{2^{m}\eta(m) - \frac{2^{-m}}{{|H_{\mathfrak{f}}(\K)|}}}\right),
\end{equation}
where the sum is over tuples $(A_{\bar{u}})_{\bar{u} \in \mathbb{F}_2^{2m}}$ satisfying the following conditions:
\begin{enumerate}[(i)]
\item $\prod_{\bar{u} \in \mathbb{F}_{2}^{2m}} A_{\bar{u}} < \Delta^{-4^{m}}X$
\item At least $2^{m}$ indices satisfy $A_{\bar{u}} > X^{\ddagger}$
\item Two indices $\bar{u}, \bar{v}$ with $A_{\bar{u}}, A_{\bar{v}} \ge X^{\dagger}$ are always unlinked
\item If two indices $\bar{u}, \bar{v}$ with $A_{\bar{v}} \leq A_{\bar{u}}$ are linked then either $A_{\bar{v}}=1$ or  $2 \leq A_{\bar{v}} < X^{\dagger}$ and $ A_{\bar{u}}  \leq X^{\ddagger}$.
\end{enumerate}
\end{prop}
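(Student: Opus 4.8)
The plan is to assemble the estimates established earlier in this section. Starting from \thmref{simpl notn thm}, which expresses $N=\sum_{\alpha\O_{\K}\in\W(X)}T_m(\alpha\O_{\K})$ as a sum over $\mathcal{D}(X,m)$ of the displayed product of quadratic residue symbols, I would first discard the tuples with $\omega_{\K}(\prod_{\bar u}\partial_{\bar u})>\Omega$: with the choice $\Omega=e\,4^m(\log\log X+B_1)$ this costs only $O_{\K,m}(X/\log X)$ by \eqref{larg factors est}. For the surviving tuples I would partition the range of each $\N(\partial_{\bar u})$ into the intervals $[A_{\bar u},\Delta A_{\bar u}]$ with $A_{\bar u}=\Delta^{r}$ and $\Delta=1+\log^{-2^m}X$; this is exactly the identity \eqref{range subdivid eq}, namely $N=\sum_A T(X,m,A)+O(X/\log X)$, the outer sum running over tuples $A=(A_{\bar u})_{\bar u\in\mathbb{F}_2^{2m}}$ with $\prod_{\bar u}A_{\bar u}\le X$.

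Next I would split $\sum_A$ according to whether $A$ lies in the first, second, third or fourth family, or in none of them. One checks directly from the definitions that, among tuples with $\prod_{\bar u}A_{\bar u}<\Delta^{-4^m}X$ (equivalently, not in the first family), being outside the remaining three families is precisely conditions (ii), (iii), (iv) of the proposition: (ii) negates \eqref{2nd family}; (iii) negates \eqref{3rd family iii} once \eqref{3rd family i}--\eqref{3rd family ii} already hold; and, granting (i)--(iii), the only surviving entry into the fourth family is a linked pair $\bar u,\bar v$ (necessarily with $A_{\bar v}\le A_{\bar u}$, since $X^{\ddagger}>X^{\dagger}$ for large $X$) such that $2\le A_{\bar v}<X^{\dagger}$ and $A_{\bar u}\ge X^{\ddagger}$, the negation of which is (iv). Here the hypothesis $\omega_{\K}(\prod_{\bar u}\partial_{\bar u})\le\Omega$ in the fourth family imposes nothing new, since $T(X,m,A)$ already sums only over such $\partial_{\bar u}$, and the requirement there that two indices with $A_{\bar u},A_{\bar v}>X^{\dagger}$ be unlinked follows from (iii). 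Hence $\sum_A T(X,m,A)$ equals $\psum_A T(X,m,A)$ plus the contributions of the four families.

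Finally I would insert the four bounds already proved. The first, third and fourth families each contribute $O_{\K,m,B}(X/\log X)$, while the second family contributes $O_{\K,m}\bigl(X(\log X)^{2^m\eta(m)-1+(1-2^{-m})/|H_{\mathfrak{f}}(\K)|}\bigr)$. Writing the exponent as $-(1-1/|H_{\mathfrak{f}}(\K)|)+2^m\eta(m)-2^{-m}/|H_{\mathfrak{f}}(\K)|$ identifies the second-family term with $\dfrac{X}{(\log X)^{1-1/|H_{\mathfrak{f}}(\K)|}}(\log X)^{2^m\eta(m)-2^{-m}/|H_{\mathfrak{f}}(\K)|}$; since $2^m\eta(m)+(1-2^{-m})/|H_{\mathfrak{f}}(\K)|>0$, the various $O(X/\log X)$ errors — from the large-$\omega$ removal, from \eqref{range subdivid eq}, and from the first, third and fourth families — are all absorbed into it, which yields the stated identity. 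That $\psum_A T(X,m,A)$ is not itself swamped by this error requires $2^m\eta(m)=\beta<1-(1-2^{-m})/|H_{\mathfrak{f}}(\K)|$, which is the role of ``$\beta>0$ sufficiently small''.

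The real content — and the main obstacle — lies not in this bookkeeping but in the four family estimates that precede the statement, above all those for the third and fourth families. For a linked pair $\bar u,\bar v$ the reciprocity law \lemref{quadreciprocity} renders $(\partial_{\bar u}/\partial_{\bar v})$ and $(\partial_{\bar v}/\partial_{\bar u})$ equal, but because $\Phi_m(\bar u,\bar v)+\Phi_m(\bar v,\bar u)=1$ is odd they do not cancel, so one is forced to extract genuine cancellation from an incomplete character sum — via the large sieve (\lemref{large sieve lem}), Heilbronn's character-sum bound (\lemref{char est lem}, \lemref{Heilbronn est}) or the Siegel--Walfisz estimate (\lemref{sig-wal lem}), depending on the relative sizes of $A_{\bar u}$ and $A_{\bar v}$ — and to pit the resulting savings against the $O((\log X)^{4^m(1+2^m)})$ tuples $A$. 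Calibrating the thresholds $X^{\dagger}$, $X^{\ddagger}$ and the H\"older exponent $g$ so that these savings win is the delicate point.
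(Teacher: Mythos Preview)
Your proposal is correct and follows exactly the paper's approach: the proposition is stated immediately after the four family estimates with the single sentence ``Combining the estimates for all the four families and recalling $\eta(m)=2^{-m}\beta$ we get'', and your write-up simply makes this combination explicit. Your verification that conditions (i)--(iv) are precisely the negation of membership in the four families, and your identification of the second-family bound as the dominant error absorbing the others, are the whole content; the paper leaves both implicit.
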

\subsection{Geometry of unlinked indices}
\begin{lem}(Lemma 18, \cite{FK07})\label{unlinked}
Let $m \geq 1$ be an integer and let $\mathcal{U} \subseteq \mathbb{F}_{2}^{2m}$ be a set of unlinked indices. Then $\# \mathcal{U} \leq 2^{m}$ and for any $\bar{u} \in \mathbb{F}_{2}^{2m}$, $\bar{u}+\mathcal{U}$ is also a set of unlinked indices. If $\#\mathcal{U}=2^{m}$ then $\mathcal{U}$ is a vector subspace of dimension $m$ in $\mathbb{F}^{2m}_{2}$ or a coset of such a subspace.
\end{lem}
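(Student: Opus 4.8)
The plan is to recast the ``linking'' relation on $\mathbb{F}_2^{2m}$ as a quadratic form and then invoke the classical bound on totally isotropic subspaces of a symplectic $\mathbb F_2$-space. First I would compute the symmetrization of $\Phi_m$. A direct expansion gives, for $\bar u,\bar v\in\mathbb F_2^2$,
$$\Phi_1(\bar u,\bar v)+\Phi_1(\bar v,\bar u)=(u_1+v_1)(1+u_2+v_2)=q(\bar u+\bar v),$$
where $q(z_1,z_2)=z_1z_2+z_1$; since $z_1^2=z_1$ over $\mathbb F_2$ this $q$ is a homogeneous quadratic form, equivalent to the hyperbolic form $w_1w_2$. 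Summing over the $m$ blocks, for $\bar u,\bar v\in\mathbb F_2^{2m}$ one gets $\Phi_m(\bar u,\bar v)+\Phi_m(\bar v,\bar u)=Q(\bar u+\bar v)$ with $Q(\bar w)=\sum_{i=1}^m q(\bar w(i))$. Thus two indices are unlinked exactly when $Q(\bar u+\bar v)=0$; in particular linkage depends only on $\bar u+\bar v$, from which the translation statement is immediate, since $(\bar w+\bar u_1)+(\bar w+\bar u_2)=\bar u_1+\bar u_2$ for $\bar u_1,\bar u_2\in\mathcal U$, so $\bar w+\mathcal U$ is again unlinked. The bilinear form polarizing $Q$ is $B_Q(x,y)=\sum_{i=1}^m(x_{2i-1}y_{2i}+x_{2i}y_{2i-1})$, the standard symplectic form on $\mathbb F_2^{2m}$, which is nondegenerate and alternating.

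Next, assuming $\mathcal U\neq\emptyset$ (the empty case being trivial), I would fix $\bar u_0\in\mathcal U$ and set $\mathcal V=\bar u_0+\mathcal U$, so that $0\in\mathcal V$, $\#\mathcal V=\#\mathcal U$, and $\mathcal V$ is unlinked. Specializing $\bar v=0$ shows $Q$ vanishes on $\mathcal V$, and then the polarization identity $B_Q(x,y)=Q(x+y)+Q(x)+Q(y)$ shows $B_Q$ vanishes on $\mathcal V\times\mathcal V$; by bilinearity it vanishes on $W\times W$ for $W:=\langle\mathcal V\rangle$, so $W$ is a totally isotropic subspace of $(\mathbb F_2^{2m},B_Q)$. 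Since in a nondegenerate $2m$-dimensional symplectic space any totally isotropic subspace satisfies $W\subseteq W^{\perp}$ and $\dim W+\dim W^{\perp}=2m$, we get $\dim W\le m$, hence $\#\mathcal U=\#\mathcal V\le\#W=2^{\dim W}\le 2^m$. If $\#\mathcal U=2^m$ then $2^m\le\#W\le 2^m$ forces $\mathcal V=W$, an $m$-dimensional subspace, and therefore $\mathcal U=\bar u_0+W$ is a coset of the $m$-dimensional subspace $W$ (and a subspace itself precisely when $\bar u_0\in W$).

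I do not expect a real obstacle: the whole content is the identity of the first paragraph, namely that the symmetrized linking form is a quadratic form in $\bar u+\bar v$ with nondegenerate polar form, after which the statement reduces to the standard structure theory of symplectic $\mathbb F_2$-spaces. This is precisely Lemma 18 of \cite{FK07}; since the objects $\Phi_m$ and the linking relation are defined here in exactly the same way, their proof applies verbatim and the sketch above is essentially theirs.
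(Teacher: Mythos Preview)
Your proposal is correct and matches the approach of the cited reference: the paper does not reprove this lemma but simply quotes it as Lemma~18 of \cite{FK07}, and your argument---computing $\Phi_m(\bar u,\bar v)+\Phi_m(\bar v,\bar u)=Q(\bar u+\bar v)$ for a quadratic form $Q$ with nondegenerate symplectic polar form, then translating to $0$ and invoking the $\dim\le m$ bound for totally isotropic subspaces---is exactly the proof given there. Your verification of the key identity $\Phi_1(\bar u,\bar v)+\Phi_1(\bar v,\bar u)=(u_1+v_1)(1+u_2+v_2)=q(\bar u+\bar v)$ is correct.
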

\begin{prop}\label{gui}
For every $m \geq 1$ and $\beta > 0$, we have
\begin{equation*}
\sum_{ \alpha  \O_{\K} \in \W(X)} T_{m}(\alpha\O_{\K}) = \psum_{A} T(X, m, A)+O\left(\frac{X}{(\log X)^{1- \frac{1}{{|H_{\mathfrak{f}(\K)}|}} }} (\log X)^{2^{m}\eta(m) - \frac{2^{-m}}{{|H_{\mathfrak{f}(\K)}|}}} \right),
\end{equation*}
where the $(*)$ in the sum is used to indicate that $A$ varies over
\begin{enumerate}[(i)]
\item $\prod_{\bar{u} \in \mathbb{F}_{2}^{2m}} A_{\bar{u}} < \Delta^{-4^{m}}X$,
\item $\mathcal{U}= \{\bar{u} : A_{\bar{u}} > X^{\ddagger}\}$ is a maximal set of unlinked indices,
\item $A_{\bar{u}}=1$ for $\bar{u} \notin \mathcal{U}$.
\end{enumerate}
\end{prop}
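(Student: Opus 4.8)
The plan is to derive \propref{gui} from \propref{4family prop} by a purely combinatorial repackaging of the conditions cutting out the range of summation; no new analytic estimate is needed, and the error term carries over unchanged. The two inputs I would use are \lemref{unlinked} and the elementary comparison $X^{\dagger} < X^{\ddagger}$, valid for all large $X$: indeed $X^{\dagger} = (\log X)^{O_{m}(1)}$ by \eqref{dag defn}, whereas $X^{\ddagger} \ge \exp(\log^{\eta(m)}X)$ with $\eta(m) = 2^{-m}\beta > 0$ by \eqref{ddag defn}. I would then show that a tuple $A = (A_{\bar u})_{\bar u \in \F_{2}^{2m}}$ satisfies conditions (i)--(iv) of \propref{4family prop} if and only if it satisfies conditions (i)--(iii) of \propref{gui}; since the summand $T(X,m,A)$ is the same in both sums, the two propositions then assert the same identity.

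For the forward direction I would fix $A$ satisfying (i)--(iv) of \propref{4family prop} and set $\mathcal{U} = \{\bar u : A_{\bar u} > X^{\ddagger}\}$. Every $\bar u \in \mathcal{U}$ has $A_{\bar u} > X^{\dagger}$, so condition (iii) forces any two elements of $\mathcal{U}$ to be unlinked; hence $\mathcal{U}$ is a set of unlinked indices and $\#\mathcal{U} \le 2^{m}$ by \lemref{unlinked}. Condition (ii) gives $\#\mathcal{U} \ge 2^{m}$, so $\#\mathcal{U} = 2^{m}$ and $\mathcal{U}$ is maximal (indeed, by \lemref{unlinked}, an $m$-dimensional subspace of $\F_{2}^{2m}$ or a coset of one), which is condition (ii) of \propref{gui}. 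Next, fixing $\bar u \notin \mathcal{U}$, the set $\mathcal{U} \cup \{\bar u\}$ has $2^{m}+1$ elements, so it is not unlinked by \lemref{unlinked}, and since $\mathcal{U}$ itself is unlinked there is some $\bar v \in \mathcal{U}$ linked to $\bar u$. As $A_{\bar u} \le X^{\ddagger} < A_{\bar v}$, applying condition (iv) to this linked pair, with $\bar u$ playing the role of the index of smaller norm, leaves only the alternatives $A_{\bar u} = 1$ or [$2 \le A_{\bar u} < X^{\dagger}$ and $A_{\bar v} \le X^{\ddagger}$]; the latter contradicts $A_{\bar v} > X^{\ddagger}$, so $A_{\bar u} = 1$, which is condition (iii) of \propref{gui}.

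For the converse, suppose $A$ satisfies (i)--(iii) of \propref{gui}. Condition (i) is identical to condition (i) of \propref{4family prop}. A maximal set of unlinked indices has exactly $2^{m}$ elements (this refines \lemref{unlinked}; I would take it from \cite{FK07}, the point being that the quadratic form governing the linking relation is of split type, so any totally isotropic subspace on which it vanishes can be enlarged to dimension $m$), so $\#\mathcal{U} = 2^{m}$ and condition (ii) of \propref{4family prop} holds. The indices with $A_{\bar u} \ge X^{\dagger}$ are precisely those of $\mathcal{U}$ (since outside $\mathcal{U}$ one has $A_{\bar u} = 1$ and $X^{\dagger} < X^{\ddagger}$), and $\mathcal{U}$ is unlinked, which gives condition (iii) of \propref{4family prop}; finally, any linked pair must contain an index outside $\mathcal{U}$ (as $\mathcal{U}$ is unlinked), hence an index with $A$-value $1$, which is exactly the first alternative of condition (iv). This establishes the equivalence, and hence \propref{gui}. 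I expect the only genuinely non-formal point to be the refinement of \lemref{unlinked} just invoked --- that maximal sets of unlinked indices have cardinality $2^{m}$ --- and that is the (mild) crux: I would either cite \cite{FK07} or supply the short Lagrangian-completion argument using that the linking form vanishes on the $m$-dimensional diagonal subspace.
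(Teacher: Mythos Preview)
Your proposal is correct and follows the same approach as the paper: deduce the proposition from \propref{4family prop} via \lemref{unlinked} and the inequality $X^{\dagger} < X^{\ddagger}$. In fact you are more careful than the paper, which only argues the forward inclusion (that conditions (i)--(iv) of \propref{4family prop} force (i)--(iii) here); your explicit verification of the converse, and your identification of the one nontrivial ingredient needed there---that every maximal unlinked set in $\F_2^{2m}$ has cardinality exactly $2^m$, cf.\ \cite{FK07}---is a welcome completion.
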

\begin{proof}
For $B \gg_{m,\beta} 1$  we have $X^{\ddagger} > (\log^{\eta(m)}X)^{B} > X^{\dagger}$.
Therefore, by part (iii) of Proposition \ref{4family prop},~ $\mathcal{U}$ is a set of unlinked indices. Further by part (ii) of Proposition \ref{4family prop},~
$\mathcal{U}$ contains at least $2^m$ elements. By \lemref{unlinked}, $\mathcal {U}$ must be a maximal set of unlinked indices.
For $\bar{v} \notin \mathcal{U}$ and any $\bar{u} \in \mathcal{U}$, $\bar{u}$ and $\bar{v}$ are linked. Since $A_{\bar{u}} \geq A_{\bar{v}}$, by part (iv) of Proposition \ref{4family prop} either $A_{\bar{v}}=1$ or  $2 \leq A_{\bar{v}} < X^{\dagger}$ and $A_{\bar{v}} \leq A_{\bar{u}}  \leq X^{\ddagger}$. But $A_{\bar{u}} > X^{\ddagger}$ so $A_{\bar{v}}=1$.
\end{proof}
\begin{defn}\label{admi}
Let $\mathcal{U} \subseteq \mathbb{F}_{2}^{2m}$ denote an unlinked set of $2^{m}$ indices. We say $A=(A_{\bar{u}})_{\bar{u} \in \mathbb{F}_{2}^{2m}}$ is admissible for $\mathcal{U}$ if it satisfies:
\begin{enumerate}[(i)]
\item $\prod_{\bar{u} \in \mathbb{F}_{2}^{2m}} A_{\bar{u}} < \Delta^{-4^{m}}X$,
\item $\mathcal{U}= \{\bar{u} : A_{\bar{u}} > X^{\ddagger}\}$,
\item $A_{\bar{u}}=1$ for $\bar{u} \notin \mathcal{U}$.
\end{enumerate}
\end{defn}

\noindent
\textbf{Note:} If $A_{\bar{u}}=1$ then $\partial_{\bar{u}}=\mathcal{O}_{\K}$.
\subsection{The final estimate}\label{final}
We begin by recalling the sum we want to estimate.
Recall that
\begin{align*}
\sum_{ \alpha\O_{\K} \in \W(X)} T_{m}(\alpha\O_{\K}) =&\sum_{\substack{ (\partial_{\bar{u}}) \\ \prod \partial_{\bar{u}} \in \W(X)}}\frac{1}{2^{m\omega_{\K}(\prod \partial_{\bar{u}})}} \prod_{\bar{u}, \bar{v}} \bigg(\frac{\partial_{\bar{u}}}{\partial_{\bar{v}}}\bigg)^{\Phi_{\K}(\bar{u}, \bar{v})}
\end{align*}
By Proposition \ref{gui}, we have
\begin{align*}
\sum_{ \alpha\O_{\K} \in \W(X)} T_{m}(\alpha\O_{\K}) =&\sum_{\substack{ (\partial_{\bar{u}}) \\ \prod \partial_{\bar{u}} \in \W(X)}}\frac{1}{2^{m\omega_{\K}(\prod \partial_{\bar{u}})}} \prod_{\bar{u}, \bar{v}} \bigg(\frac{\partial_{\bar{u}}}{\partial_{\bar{v}}}\bigg)^{\Phi_{\K}(\bar{u}, \bar{v})}\\
=& \psum_{A} T(X, m, A)+O\left(\frac{X}{(\log X)^{1- \frac{1}{{|H_{\mathfrak{f}(\K)}|}} }} (\log X)^{2^{m}\eta(m) - \frac{2^{-m}}{{|H_{\mathfrak{f}(\K)}|}}} \right),
\end{align*}
where the $(*)$ in the sum is used to indicate that $A$ varies over
\begin{enumerate}[(i)]
\item $\prod_{\bar{u} \in \mathbb{F}_{2}^{2m}} A_{\bar{u}} < \Delta^{-4^{m}}X$,
\item $\mathcal{U}= \{\bar{u} : A_{\bar{u}} > X^{\ddagger}\}$ is a maximal set of unlinked indices,
\item $A_{\bar{u}}=1$ for $\bar{u} \notin \mathcal{U}$.
\end{enumerate}
By the definition of admissible $A$ (Definition \ref{admi}), we get
\begin{align*}
\sum_{ \alpha\O_{\K} \in \W(X)} T_{m}(\alpha\O_{\K})
=& \psum_{A} T(X, m, A)+O\left(\frac{X}{(\log X)^{1- \frac{1}{{|H_{\mathfrak{f}(\K)}|}} }} (\log X)^{2^{m}\eta(m) - \frac{2^{-m}}{{|H_{\mathfrak{f}(\K)}|}}} \right)\\
=& \sum_{\mathcal{U}} \sum_{A \ \text{admissible for} \ \mathcal{U}} T(X, m, A)+O\left(\frac{X}{\log^{1- \frac{1}{{|H_{\mathfrak{f}(\K)}|}} }X} \log^{2^{m}\eta(m) - \frac{2^{-m}}{{|H_{\mathfrak{f}(\K)}|}}} X\right).
\end{align*}
Here
\begin{equation*}
T(X, m, A)=\sum_{\substack{ (\partial_{\bar{u}}),  \omega_\K(\prod \partial_{\bar{u}}) \leq \Omega \\ A_{\bar{u}} \leq \mathfrak{N}(\partial_{\bar{u}}) \leq \Delta A_{\bar{u}}\\  \prod \partial_{\bar{u}} \in \W(X) }}\frac{1}{2^{m\omega_{\K}(\prod \partial_{\bar{u}})}} \prod_{\bar{u}, \bar{v}} \bigg(\frac{\partial_{\bar{u}}}{\partial_{\bar{v}}}\bigg)^{\Phi_{\K}(\bar{u}, \bar{v})}.
\end{equation*}
Let us look at
\begin{eqnarray*}
\bigg|  \sum_{A} \sum_{\substack{ (\partial_{\bar{u}}),  \omega_\K(\prod \partial_{\bar{u}}) > \Omega \\ A_{\bar{u}} \leq \mathfrak{N}(\partial_{\bar{u}}) \leq \Delta A_{\bar{u}}\\  \prod \partial_{\bar{u}} \in \W(X) }}\frac{1}{2^{m\omega_{\K}(\prod \partial_{\bar{u}})}} \prod_{\bar{u}, \bar{v}} \bigg(\frac{\partial_{\bar{u}}}{\partial_{\bar{v}}}\bigg)^{\Phi_{\K}(\bar{u}, \bar{v})} \bigg|
& \le& \sum_{A} \sum_{\substack{ (\partial_{\bar{u}}),  \omega_\K(\prod \partial_{\bar{u}}) > \Omega \\ A_{\bar{u}} \leq \mathfrak{N}(\partial_{\bar{u}}) \leq \Delta A_{\bar{u}}\\  \prod \partial_{\bar{u}} \in \W(X) }}\frac{1}{2^{m\omega_{\K}(\prod \partial_{\bar{u}})}} \\
\end{eqnarray*}
Since
\begin{align*}
\sum_{A} \sum_{\substack{ (\partial_{\bar{u}}),  \omega_\K(\prod \partial_{\bar{u}}) > \Omega \\ A_{\bar{u}} \leq \mathfrak{N}(\partial_{\bar{u}}) \leq \Delta A_{\bar{u}}\\  \prod \partial_{\bar{u}} \in \W(X) }}\frac{1}{2^{m\omega_{\K}(\prod \partial_{\bar{u}})}}
 \ll \sum_{\ell \geq \Omega} \sum_{\substack{\N(\a) \leq X \\ \omega_{\K}(\a)=\ell}} 2^{-m\omega_{\K}(\a)} \tau_{4^{m}}(\a) \ll \frac{X}{\log X}. (\text{cf. subsection \ref{largenumofdiv}})
\end{align*}
For a set of maximally unlinked indiced $\mathcal{U}$ and an $A$ admissible for $\mathcal{U}$, we set
\begin{eqnarray*}
T'(X, m, A) & = & \sum_{\substack{(\partial_{\bar{u}})_{\bar{u} \in \mathcal{U}} \\ A_{\bar{u}} \leq \mathfrak{N}(\partial_{\bar{u}}) \leq \Delta A_{\bar{u}}\\  \prod_{\bar{u} \in \mathcal{U}} \partial_{\bar{u}} \in \W(X)}} \mu^{2}(\prod_{\bar{u} \in \mathcal{U}} \partial_{\bar{u}}) 2^{-m\omega_{\K}(\prod_{\bar{u} \in \mathcal{U}} \partial_{\bar{u}})}\prod_{\bar{u}, \bar{v} \in \mathcal{U}} \bigg(\frac{\partial_{\bar{u}}}{\partial_{\bar{v}}}\bigg)^{\Phi_{\K}(\bar{u}, \bar{v})}.
\end{eqnarray*}
By \lemref{quadreciprocity} we get
\begin{eqnarray*}
T'(X, m, A) & = & \sum_{\substack{(\partial_{\bar{u}})_{\bar{u} \in \mathcal{U}} \\ A_{\bar{u}} \leq \mathfrak{N}(\partial_{\bar{u}}) \leq \Delta A_{\bar{u}}\\  \prod_{\bar{u} \in \mathcal{U}} \partial_{\bar{u}} \in \W(X)}} \mu^{2}(\prod_{\bar{u} \in \mathcal{U}} \partial_{\bar{u}}) 2^{-m\omega_{\K}(\prod_{\bar{u} \in \mathcal{U}} \partial_{\bar{u}})}.
\end{eqnarray*}
Therefore we have
$$ \sum_{ \alpha\O_{\K} \in \W(X)} T_{m}(\alpha\O_{\K}) =\sum_{\mathcal{U}} \sum_{A  \text{ admissible for }  \mathcal{U}} T'(X, m, A)+O\left(\frac{X}{\log^{1- \frac{1}{{|H_{\mathfrak{f}(\K)}|}} }X} (\log X)^{2^{m}\eta(m) - \frac{2^{-m}}{{|H_{\mathfrak{f}(\K)}|}}} \right).$$
We now consider the sum
$$
\sum_{A \ \text{admissible for} \ \mathcal{U}}\sum_{\substack{(\partial_{\bar{u}})_{\bar{u} \in \mathcal{U}} \\ A_{\bar{u}} \leq \mathfrak{N}(\partial_{\bar{u}}) \leq \Delta A_{\bar{u}}\\  \prod_{\bar{u} \in \mathcal{U}} \partial_{\bar{u}} \in \W(X)}} \mu^{2}(\prod_{\bar{u} \in \mathcal{U}} \partial_{\bar{u}}) 2^{-m\omega_{\K}(\prod_{\bar{u} \in \mathcal{U}} \partial_{\bar{u}})}.
$$
Since $A$ is admissible for $\mathcal{U}$, at least $2^m$ entries of the tuple $A$ are $1$ and these are all the entries with $A_{\bar{u}} \le X^{\ddagger}$.  We first note that given any $\a \in \W(\Delta^{-4^m}X)$ with norm greater than $X^{\ddagger}$ appears as a product, $\prod_{\bar{u} \in \mathcal{U}} \partial_{\bar{u}}$, for some  $A$ admissible for $\mathcal{U}$.
Each such $\a$ appears as many times as the number of factorisations of $\a$ into
 $2^m$ ideals each of norm greater than $X^{\ddagger}$.
Therefore we have
\begin{align*}
\sum_{A \ \text{admissible for} \ \mathcal{U}} T'(X, m, A)=& \sum_{A \ \text{admissible for} \ \mathcal{U}}\sum_{\substack{(\partial_{\bar{u}})_{\bar{u} \in \mathcal{U}} \\ A_{\bar{u}} \leq \mathfrak{N}(\partial_{\bar{u}}) \leq \Delta A_{\bar{u}}\\  \prod \partial_{\bar{u}} \in \W(X)}} \mu^{2}(\prod_{\bar{u} \in \mathcal{U}} \partial_{\bar{u}}) 2^{-m\omega_{\K}(\prod \partial_{\bar{u}})}\\
=& \sum_{ \a \in \W(X) } \mu^{2}(\a) \tau_{\K,2^{m}}(\a) 2^{-m\omega_{\K}(\a)}+O\bigg( \sum_{\Delta^{-4^{m}}X \leq \N(\a) \leq X \atop \a \in \W(X)} \mu^{2}(\a) \tau_{\K,2^{m}}(\a)2^{-m\omega_{\K}(\a)}\bigg)\\
&+O\bigg(\sum_{ \b \in \W(X^{\ddagger})} 2^{-m\omega_{\K}(\b)} \tau_{\K, 2^m}(\b) \mu^{2}(\b)\sum_{\c \in \W(\frac{X}{\N(b)})} 2^{-m\omega_{\K}(\c)} (2^{m}-1)^{\omega_{\K}(\c)} \mu^{2}(\c)\bigg).
\end{align*}
where the last term corresponds to the summands which have at least one factor that is less than $X^{\ddagger}$. By using Lemma \ref{omega asymp in short int}, the inner sum of the second error term is bounded by
\begin{align*}
I=&\sum_{ \c \in \W(\frac{X}{\N(b)})} 2^{-m\omega_{\K}(\c)} (2^{m}-1)^{\omega_{\K}(\c)} \mu^{2}(\c)
= \sum_{ \c \in \W(\frac{X}{\N(b)})} \tau_{\K,2}(\c)^{-m+\log_{2}(2^{m}-1)}\\
\ll& \frac{X}{\N(\b)} (\log X)^{\frac{2^{-m+\log_{2}(2^{m}-1)}}{{|H_{\mathfrak{f}(\K)}|}}-1}
\ll   \frac{X}{\N(\b)} (\log X)^{\frac{1- 2^{-m}}{{|H_{\mathfrak{f}(\K)}|}}-1}.
\end{align*}
Then by using Mertens's theorem for number fields (Theorem 1, \cite{GL22}), we can bound the second  $\rO$ term as
\begin{align*}
X (\log X)^{\frac{1- 2^{-m}}{{|H_{\mathfrak{f}(\K)}|}}-1} \sum_{1 \leq \N(\b) \leq X^{\ddagger}} \frac{ \mu^{2}(\b)}{{\N(\b)}}\ll& X (\log X)^{\frac{1- 2^{-m}}{{|H_{\mathfrak{f}(\K)}|}}-1} \prod_{\N(\p) \leq X^{\ddagger}}\bigg(1+\frac{1}{\N(\p)}\bigg)\\
\ll& \frac{X}{\log^{1- \frac{1}{{|H_{\mathfrak{f}(\K)}|}} }X} (\log X)^{2^{m}\eta(m) - \frac{2^{-m}}{{|H_{\mathfrak{f}(\K)}|}}}
\end{align*}
By using similar arguments as in the estimate of the first family, we see that the first error term is bounded by $X/\log X$. Therefore, we have
\begin{equation*}
\sum_{A \ \text{admissible for} \ \mathcal{U}} T'(X, m, A)=\sum_{\a \in \W(X)} \mu^{2}(\a) \tau_{\K, 2^{m}}(\a) 2^{-m\omega_{\K}(\a)}+O\left(\frac{X}{\log^{1- \frac{1}{{|H_{\mathfrak{f}(\K)}|}} }X} (\log X)^{2^{m}\eta(m) - \frac{2^{-m}}{{|H_{\mathfrak{f}(\K)}|}}} \right).
\end{equation*}
As noted before for a squarefree ideal $\a$, $\tau_{\K, 2^{m}}(\a) = 2^{m\omega_{\K}(\a)}$. This lets us conclude that

\begin{eqnarray*}
\frac{1}{2^{m(r_\K+2)}}\sum_{ \alpha\O_{\K} \in \W(X)} T_{m}(\alpha\O_{\K}) & = & \frac{1}{2^{m(r_\K+2)}} \sum_{\mathcal{U}} \sum_{\a \in \W(X)} 1+O\left(\frac{X}{\log^{1- \frac{1}{{|H_{\mathfrak{f}(\K)}|}} }X} (\log X)^{2^{m}\eta(m) - \frac{2^{-m}}{{|H_{\mathfrak{f}(\K)}|}}} \right)\\
& = & \frac{\mathcal{N}( 2m,2)}{2^{m(r_\K+1)}}\sum_{\a \in \W(X)} 1+O\left(\frac{X}{\log^{1- \frac{1}{{|H_{\mathfrak{f}(\K)}|}} }X} (\log X)^{2^{m}\eta(m) - \frac{2^{-m}}{{|H_{\mathfrak{f}(\K)}|}}} \right).
\end{eqnarray*}
\section{Examples}\label{examples}
In this section, we give examples of number fields which satisfy all the  above conditions.
\subsection{Example 1:} $\K =\Q(i).$\\
It is obvious that $\Q(i)$ satisfies conditions \ref{one} and \ref{two}.
We know that $2\Z$ ramifies in $\K$.
For the unique prime $\p \mid 2\O_{\K}$, $ |(\O_{\K}/\p^2)^*| = \N(\p)(\N(\p)-1) = 2$.
Since $1 \not \equiv i \bmod \p^2$, condition \ref{units} is also satisfied.
\subsection{Example 2:} $\K =\Q(\sqrt{3}).$\\
Again, $\K$ is known to satisfy conditions \ref{one} and \ref{two}.
In this case $\O_{\K}^* = \{\pm 1\} \times \langle \sqrt{3} - 2 \rangle$.
Again we note that $2\Z$ ramifies in $\K$.
For the unique prime $\p \mid 2\O_{\K}$, $ |(\O_{\K}/\p^2)^*| = \N(\p)(\N(\p)-1) = 2$. In $\O_{\K}$, we have
$$
\p^2 = 2\O_{\K} = (1-\sqrt{3})\O_{\K} \cdot (1+\sqrt{3})\O_{\K} = (1-\sqrt{3})^2 \O_{\K},
$$
this shows that $2-\sqrt{3} \not\equiv 1 \bmod \p^2$. Hence we have condition \ref{units}.
\subsection{Other quadratic examples:} Some other examples of fields for which the above argument can be applied are
$\Q(\sqrt{d})$ for
$$
d \in \{7, 11, 19, 23, 27, 31, 43, 47, 59, 63, 67, 71, 75, 83, 99 \}.
$$
These were generated using SAGE and many more can be generated in this fashion.

\subsection{Example 3 (higher degree):} Let $\K$ be a Galois number field with class number $1$.
If $2\Z$ splits in $\K$, then for any prime $\p \mid 2\O_{\K}$, we know that $2 \notin \p^2$.
This is because the valuation of $2\O_{\K}$ with respect to $\p$ is $1$.
Therefore $1 \not\equiv -1 \bmod \p^2$.
This gives us condition \ref{units} above.
Here are some examples of such cubic fields.

\begin{table}[ht]
\caption{Examples of Galois cubic fields with class number $1$ in which $2\Z$ splits}

\centering
\begin{tabular}{c c c c }
\hline\hline
Serial &  Defining polynomial of the field & Serial &  Defining polynomial of the field  \\
\hline
 1 & $x^3 + x^2 - 10x - 8$ &  8 &$ x^3 + x^2 - 94x + 304 $\\
 2 &$x^3 + x^2 - 14x + 8 $&  9 & $x^3 + x^2 - 102x - 216$\\
 3 & $x^3 + x^2 - 36x - 4 $&10 & $x^3 + x^2 - 144x - 16 $\\
 4 & $x^3 + x^2 - 42x + 80$ & 11 & $x^3 + x^2 - 146x - 504 $\\
 5  & $x^3 + x^2 - 52x + 64 $&12 & $x^3 + x^2 - 152x - 220 $\\
 6 & $x^3 + x^2 - 74x - 256 $&13 &$x^3 + x^2 - 166x + 536 $\\
 7 & $x^3 + x^2 - 76x - 212 $& 14 &$x^3 + x^2 - 200x + 512 $\\
 \hline
\end{tabular}
\end{table}

\bibliographystyle{plain}  

\end{document}